\documentclass[11pt,twoside, reqno]{article}

\usepackage{amsfonts}
\usepackage{amssymb}
\usepackage{amsmath}
\usepackage{amsthm}
\usepackage{latexsym}
\usepackage{mathrsfs}
\usepackage{bbm}
\usepackage{braket}

\usepackage{txfonts}
\usepackage{enumerate}
\usepackage{hyperref}
\usepackage{graphicx}
\usepackage{authblk}

\usepackage{color}
\usepackage{pgf,tikz}
\usepackage{mathrsfs}
\usetikzlibrary{arrows}
\usepackage{mdframed}
\usepackage{braket}

\allowdisplaybreaks

\newcommand{\ed}{\mathop{}\!\mathrm{d}}

\def\r{\right}
\def\lf{\left}
\def\l{\left}

\newcommand\newdot{{\kern.8pt\cdot\kern.8pt}}

\def\ptr {/\!/}

\def\e{{\rm e}}
\def\mathpal#1{\mathop{\mathchoice{\text{\rm #1}}%
   {\text{\rm #1}}{\text{\rm #1}}%
   {\text{\rm #1}}}\nolimits}
\newcommand\Hess{\mathpal{Hess}}
\newcommand\Ric{\mathpal{Ric}}

\newcommand\E{\mathbb{E}}
\newcommand\R{\mathbb{R}}

\newcommand\II{\mathrm{II}}

\newtheorem{theorem}{Theorem}[section]
\newtheorem{lemma}[theorem]{Lemma}
\newtheorem{corollary}[theorem]{Corollary}

\theoremstyle{definition}
\newtheorem{remark}[theorem]{Remark}

\newtheorem{definition}[theorem]{Definition}

\newtheorem{example}{Example}[section]

\numberwithin{equation}{section}

\def \e{{\rm e}}

\begin{document}

 \arraycolsep=1pt

\title{\bf\Large
Gradient Estimates for Neumann Semigroups on Manifolds with Boundary  under  Unbounded  Curvature Conditions
\footnotetext{\hspace{-0.35cm}
2020 Mathematics Subject Classification.
Primary 58J65, 60J60; Secondary 35K08, 53C21.
\endgraf {\it Key words and phrases}.
Bismut formula;  Feynman--Kac semigroup; 
gradient estimates; Ricci curvature; second fundamental form; reflecting diffusion processes
\endgraf 
This work was supported by
   Natural Science Foundation in China  (Grant No. 12531007). }}

\author[1]{Li-Juan Cheng}    \author[2]{Feng-Ya Lin}

\affil[1,2]{\small School of Mathematics, Hangzhou Normal
  University,\par
  Hangzhou 311121, People's Republic of China\par
   \texttt{lijuan.cheng@hznu.edu.cn;} \par
   \texttt{2024111029010@stu.hznu.edu.cn.}
    \vspace{1em}}

\date{}
\maketitle

\vspace{-0.8cm}

\begin{center}
\begin{minipage}{13.5cm}
{\small {\bf Abstract} \quad
This paper establishes Bismut-type formulas and gradient estimates for Feynman--Kac semigroups on Riemannian manifolds with boundary, under geometric conditions
formulated in terms of  Ricci curvature $\mathrm{Ric}_Z \geq K$ and second fundamental form $\mathrm{II} \geq \sigma$ for potentially unbounded functions $K$ and $\sigma$.  We then apply these formulas to derive pointwise gradient estimates for the
Neumann semigroup under variable, possibly unbounded, lower curvature bounds.
Both convex and non-convex boundary cases are treated. In the non-convex case,
the boundary contribution is controlled by a conformal change of metric and
an exponential estimate for the boundary local time.}
\end{minipage}
\end{center}

\section{Introduction}

Let \(M\) be a \(d\)-dimensional complete Riemannian manifold, possibly with
non-empty boundary \(\partial M\), and let
\[
        L=\Delta+Z,
\]
where \(\Delta\) is the Laplace--Beltrami operator and \(Z\) is a smooth vector
field on \(M\). We consider the reflecting diffusion process \(x_t^x\)
generated by \(\frac12L\), starting from \(x\in M\), which is described by
\[
        dx_t^x
        =
        /\!/_{t}\circ dB_t
        +
        \frac12 Z(x_t^x)\,dt
        +
        \frac12 N(x_t^x)\,dl_t^x,
        \qquad
        x_0^x=x.
\]
Here \(B_t\) is a Brownian motion on \(T_xM\simeq\mathbb R^d\),
\(/\!/_{t}:T_xM\to T_{x_t^x}M\) denotes stochastic parallel transport,
\(N\) is the inward unit normal vector field on \(\partial M\), and \(l_t^x\)
is the boundary local time. Throughout the paper we assume that the reflecting
diffusion is non-explosive. The corresponding Neumann semigroup is
\[
        P_tf(x)=\mathbb E^x[f(x_t)],
        \qquad
        t\geq0,\quad f\in\mathcal B_b(M).
\]

Gradient estimates for heat semigroups are a classical topic connecting
stochastic analysis, geometric analysis and partial differential equations.
A fundamental tool is the Bismut-type derivative formula, which represents
derivatives of semigroups by stochastic integrals. Since Bismut's pioneering
work \cite{Bismut:1984}, such formulas have been developed extensively; see,
for instance,
\cite{EL94,Tha97,DTh2001,APT2003,St00,StT98}. They have important
applications to heat kernel estimates, functional inequalities, strong Feller
properties and sensitivity analysis; see, among others,
\cite{BBM2007,GW12,PZ95,WZh13}.

When the manifold has a boundary, the problem becomes more delicate. The
reflecting diffusion interacts with the geometry of \(\partial M\) through
the boundary local time, and the derivative process is influenced both by the
interior curvature and by the second fundamental form of the boundary. We write
\[
        \Ric_Z:=\Ric-\nabla Z^\flat
\]
for the Bakry--Emery curvature associated with \(L\), where \(Z^\flat\) is the one-form dual to \(Z\), namely
\[Z^\flat(v)=\langle Z,v\rangle,\qquad v\in TM.\] 
Define the second
fundamental form by
\[
        \II(X,Y)=-\langle\nabla_XN,Y\rangle,
        \qquad X,Y\in T\partial M .
\]
Bismut-type formulas and gradient estimates for reflecting diffusions and
Neumann semigroups were studied by Qian, Hsu, Wang and others; see
\cite{Qian:1997,Hsu,Wang2009SecondFundamentalForm,Wbook14}. Further uniform
gradient estimates and applications can be found in
\cite{Cheng_Thalmaier_Thompson:2018}.

A basic difficulty can already be seen from the multiplicative functional
appearing in the derivative formula. If
\[
        \Ric_Z\geq -K,
        \qquad
        \II\geq-\sigma,
\]
then the corresponding multiplicative functional \(Q_t:T_xM\to T_{x_t}M\)
typically satisfies an estimate of the form
\[
        |Q_t|
        \leq
        \exp\left(
        \frac12\int_0^tK(x_s)\,ds
        +
        \frac12\int_0^t\sigma(x_s)\,dl_s
        \right).
\]
For bounded \(K\) and \(\sigma\), this leads to standard gradient estimates.
However, when the lower curvature bound is unbounded or when the boundary is
non-convex, the time integral of \(K\) and the local-time contribution from
\(\sigma\) are no longer directly controlled. This is the main obstacle
addressed in the present paper.

Our aim is to establish a new type of Bismut-type formulas and pointwise gradient estimates
for Neumann semigroups on manifolds with boundary under variable, possibly
unbounded, curvature conditions. The first main ingredient is a
Feynman--Kac version of the Bismut formula for reflecting diffusions. In this
formula, the potential contributes an additional drift term involving
\(\ed V\), whereas the effects of the interior curvature and the boundary
geometry are encoded in the multiplicative functional. This separation is
crucial: it allows us to absorb the unbounded negative part of the curvature
into a Feynman--Kac weight, rather than estimating it directly through the
derivative process. We then choose the potential according to the curvature
lower bound and use the resulting formula to derive pointwise gradient
estimates for the Neumann semigroup. This approach is related to the
Feynman--Kac method of Da Prato and Priola \cite{DaP2018}, where gradient
estimates were obtained without monotonicity assumptions. For Bismut formulas
and estimates for Feynman--Kac semigroups on manifolds without boundary, see
\cite{Th19,Li2021}.

Let \(V:M\to\mathbb R\) be a potential with \(V^{-}\in\mathcal K(M)\)  (see the definition in \eqref{Kato-def}). The Feynman--Kac semigroup is defined by
\begin{align}\label{eqn-V}
        P_t^Vf(x)
        =
        \mathbb E^x
        \left[
        \mathbb V_t f(x_t)
        \right],
        \qquad
        \mathbb V_t
        =
        \exp\left(-\int_0^tV(x_s)\,ds\right).
\end{align}
Kato potentials and their Feynman--Kac semigroups are classical in the theory
of Schr\"odinger operators; see \cite{Simon,Gu2017,GP15}. In Section~2 we prove
a Bismut-type formula for \(P_t^V\) on manifolds with boundary. Compared
with the classical Neumann Bismut formula, this result is tailored to
unbounded Kato-type potentials and is stated in a localized form before the
global exhaustion argument is applied.  This localized formulation is useful
for treating unbounded curvature lower bounds and for passing rigorously from
the penalized multiplicative functionals to the limiting boundary-adapted
multiplicative functional. This formula is
the analytic foundation for the gradient estimates of the Neumann semigroup.

We first consider the convex boundary case. Assume that, for some \(m>d\),
\[
        \Ric_Z^{m,d}
        :=
        \Ric-\nabla Z^\flat-\frac{Z\otimes Z}{m-d}
        \geq
        -\ell(\rho_o),
\]
where \(\ell\in C^1(\mathbb R_+)\) has at most quadratic growth, and suppose
that
\[
        \Ric_Z\geq -h,
        \qquad
        \II\geq0,
\]
for some non-negative \(h\in C^1(M)\). Under the pointwise moment assumptions
\[
        \sup_{s\in[0,1]}\mathbb E^x[h^2(x_s)]\leq \theta_0(x),
        \qquad
        \sup_{s\in[0,1]}\mathbb E^x[|\nabla h|^2(x_s)]
        \leq \theta_1(x),
\]
we prove that, for every \(T>0\), \(x\in M\), and
\(f\in\mathcal B_b(M)\),
\[
        |\nabla P_Tf|(x)
        \leq
        \left[
        \frac1{\sqrt{\min\{T,1\}}}
        +
        \frac14\sqrt{\theta_1(x)}
        +
        \sqrt{\theta_0(x)}
        +
        \frac18\sqrt{\theta_0(x)\theta_1(x)}
        \right]
        \|f\|_\infty .
\]
Thus the coefficients in the estimate are pointwise in \(x\) and reflect the
growth of the lower curvature bound along the diffusion.  This is the main
difference from the estimates under constant curvature lower bounds: no global
constant lower bound for \(\Ric_Z\) is imposed, and the resulting coefficient is
expressed through the actual distribution of the reflecting diffusion starting
from \(x\).  Hence the estimate remains meaningful on noncompact manifolds
whose Bakry--Emery curvature may tend to \(-\infty\) at infinity. We also point out that the moment assumptions imposed on \(h\) and
\(\nabla h\) do not by themselves guarantee the exponential integrability
needed for the multiplicative functional $Q_t$. A simple geometric example is provided by a rotationally symmetric model
manifold with metric \(g=dr^2+\varphi(r)^2g_{\mathbb S^{d-1}}\), where
\(\varphi(r)=\exp(ar^2)\) for large \(r\). Then
\[
        \Ric(\partial_r,\partial_r)
        \sim -4(d-1)a^2r^2,
\]
so the curvature lower bound is controlled by
\(h(x)=C_a(1+\rho_o(x)^2)\). Although \(h(x_s)\) and
\(|\nabla h|(x_s)\) have finite second moments on finite time intervals, the
exponential moment of \(\int_0^t h(x_s)\,ds\) diverges when \(a\) is chosen
large enough, see details in Example \ref{no-Kato} in Appendix.

The abstract moment assumptions above can be verified explicitly under radial
curvature growth conditions. In particular, if
\[
        \ell(r)=K_0(1+r^2)^\gamma,
        \qquad K_0>0,
\]
then the subquadratic case \(\gamma\in[0,1)\) leads to exponential-type
weights, while the borderline quadratic case \(\gamma=1\) leads to
polynomial-type weights. Hence the resulting estimates cover both
subquadratic and quadratic curvature growth regimes; see Theorem~\ref{h-theorem}
and Corollary~\ref{examples}.

We then treat non-convex boundaries. Suppose
\[
        \II\geq-\sigma
\]
for a non-negative boundary function \(\sigma\). The local time term in the
multiplicative functional then gives a positive boundary contribution. To
control it, we use the conformal change method developed by Wang
\cite{Wang:2007,Wbook14}. More precisely, we introduce a positive function
\(\phi\) and consider the conformally changed metric
\[
        g'=\phi^{-2}g.
\]
If
\[
        N\log\phi\geq\sigma
        \quad\text{on }\partial M,
\]
then the boundary becomes convex under \(g'\). The key additional ingredient
is the local-time estimate
\[
\mathbb E^x
\left[
\phi^{-2}(x_t)
\exp\left(\int_0^t\sigma(x_s)\,dl_s\right)
\exp\left(-\int_0^t\psi(x_s)\,ds\right)
\right]
\leq
\phi^{-2}(x),
\]
provided
\[
        -L\log\phi+2|\nabla\log\phi|^2\leq\psi .
\]
This allows the boundary local time to be absorbed into a Feynman--Kac weight.

Combining the conformal change, the local-time estimate and the
Feynman--Kac Bismut formula, we obtain a non-convex boundary gradient estimate.
If
\[
        \Ric_Z\geq -h,
        \qquad
        \II\geq-\sigma,
\]
and if there exists \(\phi\in\mathcal D(M)\) (see Definition \ref{def-DM} below for the definition of $\mathcal D(M)$) satisfying
\[
        N\log\phi\geq\sigma,
        \qquad
        -L\log\phi+2|\nabla\log\phi|^2\leq \psi,
\]
then, under the moment assumptions
\[
        \sup_{s\in[0,1]}
        \mathbb E^x[|h+\psi|^2(x_s)]\leq\alpha_0(x),
        \qquad
        \sup_{s\in[0,1]}
        \mathbb E^x[|\nabla(h+\psi)|^2(x_s)]
        \leq\alpha_1(x),
\]
we prove
\[
        |\nabla P_Tf|(x)
        \leq
        \frac{\|\phi\|_\infty}{\phi(x)}
        \left[
        \frac1{\sqrt{\min\{T,1\}}}
        +
        \frac14\sqrt{\alpha_1(x)}
        +
        \sqrt{\alpha_0(x)}
        +
        \frac18\sqrt{\alpha_0(x)\alpha_1(x)}
        \right]
        \|f\|_\infty .
\]
Under additional geometric assumptions near the boundary, Wang's construction
\cite{Wang:2007,wang2005functional,Wbook14} gives an explicit conformal factor
\(\phi\), and hence an explicit non-convex boundary estimate.

Finally, we apply these semigroup estimates to Neumann eigenfunctions. Let
\(u\in C^2(M)\) satisfy
\[
        Lu=-\lambda u
        \quad\text{in }M^\circ,
        \qquad
        Nu|_{\partial M}=0,
        \qquad
        \lambda>0.
\]
Since \(P_t\) is generated by \(\frac12L\), one has
\[
        P_tu=\e^{-\lambda t/2}u,
\]
and therefore
\[
        |\nabla u|(x)=\e^{\lambda T/2}|\nabla P_Tu|(x).
\]
Choosing \(T=\lambda^{-1}\) gives pointwise gradient estimates for Neumann
eigenfunctions. Related gradient and Hessian estimates on compact manifolds
with boundary were obtained in
\cite{ArnaudonThalmaierWang2020,ChengThalmaierWang2024Hessian}. In our setting,
both convex and non-convex boundary cases are covered, and the leading
high-frequency term is of order
\[
        \sqrt{\lambda}\,\|u\|_\infty .
\]

The paper is organized as follows. In Section~2 we establish Bismut-type
formulas for Feynman--Kac semigroups on manifolds with boundary. In
Section~3 we study the convex boundary case and derive pointwise gradient
estimates under unbounded curvature lower bounds. In Section~4 we treat
non-convex boundaries by conformal change and a boundary local-time estimate.
Finally, Section~5 applies the semigroup estimates to Neumann eigenfunctions.

\paragraph{Notation convention.}
Throughout this paper,  we distinguish the exterior differential from ordinary
time, stochastic, and local-time differentials. The exterior differential of a
function is denoted by the upright roman symbol \(\ed\); for example,
\(\ed f\), \(\ed V\), and \(\ed P_t^V f\) are one-forms. By contrast, the
usual differentials in stochastic calculus and time integration are kept in the
standard italic notation, such as \(dB_t\), \(dt\), and \(dl_t\). The covariant
differential along the reflecting diffusion is denoted by \({\bf D}\).

\section{Bismut-Type Formula for Feynman--Kac Semigroups}

The usual Kato class $\mathcal{K}(M)$ is defined as the set of functions $f$ such that
\begin{align}\label{Kato-def}
\lim_{\alpha\downarrow  0} \sup_{x\in M}  \int_0^{\alpha} \E^x\bigl[|f(x_s)|\bigr] \, ds=0,
\end{align}
which  plays an important role in the study of Schr\"{o}dinger
operators and their semigroups, see Simon  \cite{Simon}  and the references therein. Under standard local heat-kernel estimates one has useful $L^p$-criteria for membership in the Kato class; see the cited references  \cite{Gu2017,GP15}.
By \cite[Lemma 3.9]{GP15}, it was proved that if $f\in \mathcal{K}(M)$, there exist constants
$c, \, C$ such that  for each $t>0$, 
\begin{align*}
\sup_{x\in M}\E^x\Bigl[ \e^{\int_0^t |f|(x_s)\, ds}\Bigr] \leq C\e^{ct}.
\end{align*}
To study the Neumann Feynman--Kac semigroup, we introduce the following  function space. 
Let
\begin{align*}
\mathcal{C}_{N}(L_V) &= \left\{ f \in C^2(M)\cap \mathcal{B}_b(M):\ Nf|_{\partial M} = 0,\ \frac{1}{2}Lf-Vf \in \mathcal{B}_b(M) \right\}.
\end{align*}
We first introduce some properties for the Neumann Feynman--Kac  semigroup $P_{\cdot}^V$:
\begin{theorem}\label{FK-prop}
Let \( f \in \mathcal{C}_N(L_V) \),  and $V^-\in \mathcal{K}(M)$ and $V\in  C^1(M)$. Then, 
\begin{enumerate}
    \item[(1)] \( \frac{d}{dt}P_t^V f = P^V_t (\frac{1}{2}L-V) f = (\frac{1}{2}L-V) P^V_t f, \quad t > 0; \)
    \item[(2)] \( N P_t^V f|_{\partial M} = 0, \quad t > 0. \)
\end{enumerate}
\end{theorem}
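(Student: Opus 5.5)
We prove (1) first, since (2) will follow from it. The first equality comes from It\^o's formula along the reflecting diffusion. For \(f\in\mathcal C_N(L_V)\), It\^o's formula applied to \(\mathbb V_t f(x_t)\) gives
\[
d\bigl(\mathbb V_t f(x_t)\bigr)
=\mathbb V_t\,\langle \nabla f(x_t),/\!/_t\, dB_t\rangle
+\mathbb V_t\Bigl(\tfrac12 Lf-Vf\Bigr)(x_t)\,dt
+\tfrac12\mathbb V_t\,Nf(x_t)\,dl_t .
\]
The local-time term vanishes because \(Nf|_{\partial M}=0\).

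To remove the martingale term, localize with the exit times \(\tau_n\) of an exhaustion by relatively compact sets \(D_n\uparrow M\). On \([0,\tau_n]\) the gradient \(\nabla f\) is bounded, so the stopped stochastic integral has mean zero, and
\[
\mathbb E^x[\mathbb V_{t\wedge\tau_n}f(x_{t\wedge\tau_n})]-f(x)=\mathbb E^x\int_0^{t\wedge\tau_n}\mathbb V_s\Bigl(\tfrac12L-V\Bigr)f(x_s)\,ds .
\]
Now let \(n\to\infty\). Non-explosion gives \(\tau_n\uparrow\infty\). Both \(f\) and \((\frac12L-V)f\) are bounded, and \(\sup_{s\le t}\mathbb V_s\le \exp\bigl(\int_0^t V^-(x_s)\,ds\bigr)\) is integrable by the Kato estimate from \cite[Lemma 3.9]{GP15}. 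Dominated convergence then yields
\[
P^V_tf-f=\int_0^tP^V_s\Bigl(\tfrac12L-V\Bigr)f\,ds .
\]
The integrand is continuous in \(s\), by the strong Markov property and continuity of paths together with the same domination, so differentiating gives \(\frac{d}{dt}P^V_tf=P^V_t(\frac12L-V)f\).

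The second equality in (1), \(P^V_t(\frac12L-V)f=(\frac12L-V)P^V_tf\), is the main obstacle: we must know that \(P^V_tf\) is \(C^2\) and can be differentiated under the expectation. We would obtain this from regularity of the Feynman--Kac Neumann heat kernel, since \(V\in C^1\) and the boundary is smooth. Concretely, \(u(t,x)=P^V_tf(x)\) solves the Neumann problem
\[
\partial_tu=\Bigl(\tfrac12L-V\Bigr)u,\qquad Nu|_{\partial M}=0,\qquad u(0)=f
\]
locally, by local parabolic regularity up to the boundary. Alternatively one argues by the semigroup property: \(P^V_{t+s}f=P^V_t P^V_s f\) and \(P^V_{t+s}f=P^V_sP^V_tf\), so differentiating in \(s\) at \(0\) gives the generator acting on \(P_t^Vf\). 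For this, \(P^V_tf\) must lie in the domain, i.e. in \(\mathcal C_N(L_V)\) locally. That membership is the crux, and we would try to import it from the local Neumann heat kernel theory (e.g. \cite{Wbook14}), localized on \(D_n\), where \(V\) is bounded.

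Part (2) then follows from this regularity. Knowing \(u=P^V_tf\in C^2\), apply It\^o's formula to the space-time process \(\mathbb V_s\,u(t-s,x_s)\) for \(s\le t\). The \(ds\)-terms cancel by (1), so
\[
f(x)\text{-type identity: }\quad P^V_tf(x)=\mathbb E^x\bigl[\mathbb V_t f(x_t)\bigr]+\tfrac12\,\mathbb E^x\int_0^t\mathbb V_s\,Nu(t-s,x_s)\,dl_s .
\]
Here the first expectation already equals \(P^V_tf(x)\) by definition, which forces
\[
\mathbb E^x\int_0^t\mathbb V_s\,Nu(t-s,x_s)\,dl_s=0\qquad\text{for all }x,t .
\]
Suppose \(Nu(t_0,y)>0\) at some boundary point \(y\). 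By continuity, \(Nu>0\) on a neighbourhood in space-time. Starting the diffusion at \(y\), \(l_s\) increases immediately with positive probability, so the integral would be strictly positive. To make this argument clean, replace the time horizon \(t\) by \(t+\varepsilon\) and start at \(y\), so the relevant neighbourhood is hit at small times. This contradiction gives \(Nu\le0\) on \(\partial M\); the symmetric argument gives \(Nu\ge0\), hence \(NP^V_tf|_{\partial M}=0\).
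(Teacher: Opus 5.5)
\paragraph{Comparison with the paper.}
For part (1) you follow the paper. The first equality comes from It\^o's formula for $\mathbb V_tf(x_t)$; your localization by exit times and the Kato domination $\sup_{s\le t}\mathbb V_s\le \exp(\int_0^tV^-(x_s)\,ds)$ fill in what the paper states in one line. The second equality is taken from parabolic regularity. The paper is a little more specific about how that regularity is reached. It identifies $u(t,\cdot)=P^V_tf$ as a mild solution via the Duhamel formula $P^V_tf=P_tf-\int_0^tP_{t-s}(VP^V_sf)\,ds$. It then cites Neumann parabolic regularity to obtain a strong solution of the Neumann problem in $C^{1,2}((0,T]\times M)$, and reads part (2) off that same statement.

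\paragraph{Your argument for (2).}
The real difference is your probabilistic derivation of (2), and there are two problems with it.

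First, as you have set things up, it is redundant and arguably circular. The input you use for the second equality in (1) is either ``$u$ solves the Neumann problem'' or ``$P^V_tf\in\mathcal C_N(L_V)$ locally'', and both already contain $NP^V_tf|_{\partial M}=0$. The argument would add something only if you fed it interior regularity plus $C^{1,2}$ regularity up to $\partial M$ \emph{without} the boundary condition. In practice that boundary regularity comes from the Neumann theory itself, which is why the paper's single appeal is the economical route.

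Second, the contradiction step fails as written. The identity $\mathbb E^y\int_0^{t}\mathbb V_s\,Nu(t-s,x_s)\,dl_s=0$ holds over all of $[0,t]$. So $Nu>0$ near $(t_0,y)$ does not force the integral to be positive: later boundary visits where $Nu<0$ can cancel the early contribution. Moving the horizon to $t+\varepsilon$ does not address this.

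The fix is to stop early. Pick a neighbourhood $U\ni y$ and $r\in(0,t_0)$ with $Nu(t_0-s,\cdot)\ge\delta>0$ on $\partial M\cap\overline U$ for $s\le r$, and let $\tau$ be the exit time from $U$. The strong Markov property gives $\mathbb E^y[\mathbb V_{\tau\wedge r}\,u(t_0-\tau\wedge r,x_{\tau\wedge r})]=u(t_0,y)$. Combined with It\^o's formula on $[0,\tau\wedge r]$, this yields $\mathbb E^y\int_0^{\tau\wedge r}\mathbb V_s\,Nu(t_0-s,x_s)\,dl_s=0$. That contradicts $l_{\tau\wedge r}>0$ a.s.\ for a start on $\partial M$. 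Stopping before time $t_0$ also removes your need for $C^{1,2}$ regularity of $u$ down to time $0$, which regularity on $(0,T]\times M$ does not supply.
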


\begin{proof}
We first justify the regularity of the Feynman--Kac semigroup used below.
Since \(V^-\in \mathcal K(M)\), the Khasminskii-type estimate for Kato
potentials implies that, for every \(T>0\),
\[
 \sup_{x\in M}\mathbb E^x\exp\left(\int_0^T V^-(x_s)\,ds\right)<\infty .
\]
Consequently the Feynman--Kac semigroup
\[
 P_t^V f(x)
 =
 \mathbb E^x\left[
 \exp\left(-\int_0^t V(x_s)\,ds\right)f(x_t)
 \right]
\]
is well defined on bounded measurable functions. Moreover, by the Markov
property of the reflecting diffusion, \(P_t^V\) satisfies the Duhamel formula
\[
 P_t^V f
 =
 P_t f
 -
 \int_0^t P_{t-s}\big(VP_s^Vf\big)\,ds .
\]
Thus \(u(t,x):=P_t^Vf(x)\) is a mild solution of the Neumann parabolic
problem
\[
\begin{cases}
 \partial_t u=\left(\frac12L-V\right)u, & x\in M^\circ,\ t>0,\\
 Nu=0, & x\in \partial M,\ t>0,\\
 u(0,\cdot)=f.
\end{cases}
\]
The Kato condition and the associated Feynman--Kac semigroups are classical;
see, for instance, Simon~\cite{Simon} and G{\"u}neysu~\cite{Gu2017}.
Since \(V\in C^1(M)\) and the boundary is smooth, standard parabolic
regularity for Neumann boundary value problems upgrades this mild solution
to a strong solution on \((0,T]\times M\); see
Ouhabaz~\cite{Ouhabaz2005}, Ladyzhenskaya--Solonnikov--Ural'tseva~\cite{LSU1968},
and Lunardi~\cite{Lunardi1995}. 
In particular, for
\(f\in \mathcal{C}_N(L_V)\), one has
\[
u \in C^{1,2}((0,T]\times M).
\]

It remains to prove the first equality in (1). This follows from the identity \( P_t^V f = f + \int_0^t P_s^V \big(\frac{1}{2}L-V\big) f \, ds \), which is implied by Itô's formula applied to
\begin{align*}
d \Big[\e^{-\int_0^tV(x_s)\, ds}f(x_t)\Big] &\overset{m}{=} \e^{-\int_0^tV(x_s)\, ds}\Big(-V(x_t)f(x_t)+\frac{1}{2}Lf(x_t)\Big)\, dt.
\end{align*}

\end{proof}

To state the main result, we first recall the construction of the multiplicative functional
$Q_{\bullet}$ appearing in the Bismut formula; see 
{Hsu}~\cite{Hsu} and
\cite[Theorem~1.1]{Wbook14} for the case of manifolds with boundary.

For $t\geq0$, let $/\!/_{0\to t}:T_xM\to T_{x_t}M$ denote stochastic
parallel transport along the paths of the reflecting diffusion process
$\{x_{\cdot}\}$. The covariant differential ${\bf D}$ is defined by
\[
        {\bf D}:=/\!/_{0\to t}\,d\,/\!/_{t\to0},
\]
where $d$ is the usual It\^o stochastic differential in the time variable.
Let $D\Subset M$ be a relatively compact regular domain with $x\in D$, and
let
\[
        \tau:=\inf\{t\geq0:x_t\notin D\}
\]
be the first exit time of $x_t$ from $D$. By the curvature assumptions below,
there exist non-negative constants $K_0,\sigma_0$ such that
$K\geq -K_0$ on $D$ and $\sigma\geq-\sigma_0$ on $\partial M\cap D$.

For $n\in\mathbb N$, let
$Q_{t\wedge\tau}^{(n)}:T_xM\to T_{x_{t\wedge\tau}}M$ solve, up to the
stopping time $\tau$, the covariant differential equation
\begin{equation}\label{Qn-eq}
        {\bf D}Q_t^{(n)}
        =-\frac12\left\{
        \Ric_Z^\sharp(Q_t^{(n)})\,dt
        +\II^\sharp(Q_t^{(n)})\,dl_t
        +nP_N(Q_t^{(n)})\,dl_t
        \right\},
        \qquad
        Q_0^{(n)}=\mathrm{id}.
\end{equation}
Here $\mathrm{id}$ is the identity map on $T_xM$ and for $x_t\in\partial M$,
$P_N(Q_t^{(n)})$ denotes the projection onto the normal
direction, namely
\[
        P_N(Q_t^{(n)}v)
        =\langle Q_t^{(n)}v,N(x_t)\rangle N(x_t),
        \qquad v\in T_xM.
\]
Moreover, for $x_t\in\partial M$, let
$P_\partial:T_{x_t}M\to T_{x_t}\partial M$ be the tangential projection. The
operator $\II^\sharp$ is defined by
\[
        \langle \II^\sharp u,w\rangle
        =\II(P_\partial u,P_\partial w),
        \qquad u,w\in T_{x_t}M,
\]
and $\II^\sharp(Q_t^{(n)})$ in \eqref{Qn-eq} means
$\II^\sharp(Q_t^{(n)}v)$ for $v\in T_xM$.

For every fixed \(T>0\), the preceding construction gives
\begin{equation}\label{Qn-bound}
        \sup_{n\geq1,\,s\in[0,T]}
        |Q_{s\wedge\tau}^{(n)}|
        \leq
        \exp\left(
        \frac12K_0(T\wedge\tau)
        +
        \frac12\sigma_0l_{T\wedge\tau}
        \right),
\end{equation}
and
\begin{equation}\label{normal-vanish}
        \int_0^{T\wedge\tau}
        |P_N(Q_s^{(n)})|^2\,dl_s
        \leq
        \frac1n
        \int_0^{T\wedge\tau}
        |Q_s^{(n)}|^2
        \{K_0\,ds+\sigma_0\,dl_s\}
        \longrightarrow0 .
\end{equation}
as $n\to\infty$. On the other hand, by \cite[Lemma~3.1.2]{Wbook14},
\[
        \mathbb E^x\left[\exp\left(\lambda l_{T\wedge\tau}\right)\right]<\infty,
        \qquad \lambda>0.
\]
Therefore the right-hand side of \eqref{Qn-bound} is integrable on compact
time intervals.

By the proof of \cite[Theorem~1.1]{Wbook14}, there exists a progressively
measurable multiplicative functional \(Q_{\cdot\wedge\tau}\) such that, after
passing to a subsequence \(n_k\), the sequence \(Q^{(n_k)}\) converges to \(Q\)
in Hsu's weak sense. More precisely, after identifying tangent spaces along
\(x_{\cdot}\) by stochastic parallel transport,  for all bounded adapted \(T_xM\)-valued processes \(\xi_s,\eta_s\) and every bounded \(T_xM\)-valued random variables
\(\xi,\eta\), one has
\[
\lim_{k\to\infty}
\mathbb E^x
\int_0^{T\wedge\tau}
\left\langle
\bigl(Q_s^{(n_k)}-Q_s\bigr)\xi_s,\eta_s
\right\rangle\,ds
=0,
\]
and, for every fixed \(t\in[0,T]\),
\[
\lim_{k\to\infty}
\mathbb E^x
\left\langle
\bigl(Q_{t\wedge\tau}^{(n_k)}-Q_{t\wedge\tau}\bigr)\xi,\eta
\right\rangle
=0 .
\]
This weak convergence is sufficient for passing to the limit in the
stochastic-integral identities below, after localization. The uniform estimate
\eqref{Qn-bound}, together with the It\^o isometry, provides the required
square-integrability and uniform bounds for the martingale terms. Moreover, the
penalization term \(nP_N(Q_t^{(n)}(v))\,dl_t\) forces the normal component to
vanish in the limit:
\[
        \langle Q_t(v),N(x_t)\rangle=0,
        \qquad dl_t\text{-a.e. on }[0,\tau],
        \qquad v\in T_xM.
\]

\begin{theorem}\label{BisF-Neumann}
Let $D\Subset M$ be a relatively compact regular domain with $x\in D$, and
let $\tau$ be the first exit time of $x_t$ from $D$. 
Assume that there exist $K\in C(D)$ and
$\sigma\in C(\partial M\cap D)$ such that
\[
        \Ric_Z\geq K,
        \qquad
        \II\geq \sigma
        \quad \hbox{on \  }D .
\]
Then there exists a progressively measurable multiplicative functional
\(
        Q_{t\wedge\tau}:T_xM\to T_{x_{t\wedge\tau}}M
\)
such that
\[
        \langle Q_t(v),N(x_t)\rangle=0,
        \qquad dl_t\hbox{-a.e. on \  }[0,\tau],
\]
and
\[
        |Q_{t\wedge\tau}|
        \leq
        \exp\left(
        -\frac12\int_0^{t\wedge\tau}K(x_s)\,ds
        -\frac12\int_0^{t\wedge\tau}\sigma(x_s)\,dl_s
        \right),
\]
and for  $V\in C^1(M)$ and $V^-\in \mathcal K(M)$,  for every
$f\in\mathcal B_b(M)$, $v\in T_xM$, and every non-negative adapted process
$k\in L^{1,2}([0,T];\mathbb P)$ satisfying
\(
        k_0=1, \ 
        k_s=0\ \ \hbox{for }s\geq T\wedge\tau,
\)
one has
\begin{equation}\label{Bismut-F1}
\begin{aligned}
       \langle \nabla P_T^Vf, \, v\rangle
        =&-\mathbb E^x\left[
        \mathbb V_T f(x_T)
        \int_0^{T\wedge\tau}
        \left\langle Q_t(\dot k_t v),/\!/_t\,dB_t\right\rangle
        \right]
      -\mathbb E^x\left[
        \mathbb V_T f(x_T)
        \int_0^{T\wedge\tau}
        \ed V(k_tQ_t(v))\,dt
        \right].
\end{aligned}
\end{equation}
\end{theorem}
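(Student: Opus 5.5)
We follow the standard martingale approach for Bismut formulas, adapted to the Feynman--Kac weight and the reflecting boundary. The existence of \(Q\), the normal-vanishing property and the pointwise bound will be obtained from the penalized functionals \(Q^{(n)}\) of \eqref{Qn-eq}. For the bound, we compute \(d|Q^{(n)}_tv|^2\) using \(\Ric_Z\geq K\), \(\II\geq\sigma\) and \(\langle nP_N(Q^{(n)}v),Q^{(n)}v\rangle\geq0\). Since \(\II^\sharp\) only sees the tangential part, we get
\[
|Q^{(n)}_{t\wedge\tau}v|\leq |v|\exp\Big(-\tfrac12\int_0^{t\wedge\tau}K(x_s)\,ds-\tfrac12\int_0^{t\wedge\tau}\sigma(x_s)\,dl_s\Big).
\]
Here we use that \(|P_\partial u|^2\le |u|^2\), with the negative part of \(\sigma\) handled pathwise. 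The bound survives the weak limit along \(n_k\), because bounds of this kind are preserved under weak convergence: test against \(\xi,\eta\) supported on events and use lower semicontinuity of the norm. The normal vanishing follows from \eqref{normal-vanish}.

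\emph{Step 1 (martingale for the penalized functional).} Fix \(f\in\mathcal C_N(L_V)\) first, and put \(u(s,y)=P^V_{T-s}f(y)\). By Theorem~\ref{FK-prop}, \(u\in C^{1,2}\), \(\partial_s u=-(\frac12L-V)u\) and \(Nu=0\) on \(\partial M\). Define
\[
M^{(n)}_s=\mathbb V_s\,\langle \nabla u(s,x_s),Q^{(n)}_s v\rangle,\qquad s\le T\wedge\tau .
\]
Apply It\^o's formula, using the commutation \(d\nabla u = \nabla d u\) along the process together with the Weitzenb\"ock formula \(\nabla Lu=L\nabla u\)-type identity \(\Box\nabla u-\Ric_Z^\sharp\nabla u=\nabla Lu\). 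The drift from \(\partial_s\nabla u\) then cancels the Ricci term in \eqref{Qn-eq}. What remains is a drift \(-\mathbb V_s\, u\,\ed V(Q_s^{(n)}v)\,ds\), coming from \(\nabla(Vu)=V\nabla u+u\nabla V\), whose \(V\nabla u\) part cancels with \(d\mathbb V_s\). There is also a local-time term \(\frac12\mathbb V_s\{\Hess u(N,Q^{(n)}v)-\II(\nabla u,Q^{(n)}v)-n\langle\nabla u,N\rangle\langle Q^{(n)}v,N\rangle\}dl_s\). Since \(Nu=0\), the identity \(\Hess u(N,X)=\II(\nabla u,X)\) for tangential \(X\) holds, so the tangential part cancels. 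The normal part reduces to \(\Hess u(N,N)\langle Q^{(n)}v,N\rangle\,dl_s\). This residual term is controlled in \(L^1\) by \(\big(\int|P_NQ^{(n)}|^2dl\big)^{1/2}\), with \(\Hess u\) bounded on \(\bar D\times[\epsilon,T]\) via parabolic regularity. By \eqref{normal-vanish} it vanishes as \(n\to\infty\).

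\emph{Step 2 (integration by parts with \(k\)).} Consider \(N_s=k_s M^{(n)}_s+\mathbb V_s u(s,x_s)\int_0^s\langle Q^{(n)}_r\dot k_r v,/\!/_r dB_r\rangle\). The martingale part of \(\mathbb V_s u(s,x_s)\) is \(\mathbb V_s\langle\nabla u,/\!/_sdB_s\rangle\), and its bracket with the stochastic integral produces \(\mathbb V_s\dot k_s\langle\nabla u,Q^{(n)}_s v\rangle ds\), which cancels \(\dot k_s M^{(n)}_s ds\). Hence \(N_s+\int_0^s k_r\mathbb V_r u\,\ed V(Q^{(n)}_rv)dr\) is a local martingale up to the \(dl\) error of Step 1. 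Localization, \eqref{Qn-bound}, the exponential integrability of \(l_{T\wedge\tau}\) and the Kato bound on \(\mathbb V\) make it a true martingale. Equating expectations at \(s=0\) (value \(\langle\nabla P^V_Tf,v\rangle\)) and \(s=T\wedge\tau\) (where \(k=0\)) gives \eqref{Bismut-F1} with \(Q^{(n)}\). Then we use the strong Markov property \(\mathbb E[\mathbb V_Tf(x_T)\mid\mathcal F_{T\wedge\tau}]=\mathbb V_{T\wedge\tau}u(T\wedge\tau,x_{T\wedge\tau})\) to replace \(u\) by \(\mathbb V_Tf(x_T)\). Near \(s=T\) we run the argument on \([0,T-\epsilon]\) and let \(\epsilon\downarrow0\).

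\emph{Step 3 (limits).} We pass \(n_k\to\infty\) via Hsu's weak convergence. The stochastic integral term is handled by writing \(\mathbb V_Tf(x_T)\) through its martingale representation, so that its expectation against \(\int\langle Q\dot k v,/\!/dB\rangle\) becomes \(\mathbb E\int\langle Q_s\dot k_s v,\eta_s\rangle ds\), which is weakly continuous. The \(\ed V\) term is linear in \(Q\) with bounded coefficients on \(D\). Finally we extend from \(\mathcal C_N(L_V)\) to \(\mathcal B_b(M)\). We do this by applying the formula to \(P^V_\epsilon f\) and using the semigroup property, then monotone class/dominated convergence, since the right side is continuous in \(f\) under bounded pointwise convergence. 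The main obstacle is Step 1's boundary term: we need the Hessian identity at \(\partial M\) and uniform bounds on \(\Hess u\) up to the boundary, which are needed to kill the penalization residue rigorously.
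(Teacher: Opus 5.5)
Your proposal is essentially the paper's proof, step for step. It uses the penalized $Q^{(n)}$, the It\^o plus Weitzenb\"ock computation for $\mathbb V_t\langle\nabla P^V_{T-t}f,Q^{(n)}_tv\rangle$, the reduction of the boundary term to $\Hess(N,N)\langle Q^{(n)}v,N\rangle\,dl$ via $Nu=0$ and its removal by \eqref{normal-vanish}, integration by parts in $k$, the strong Markov property, the weak limit along $n_k$, and regularization by $P^V_\varepsilon f$. Your one addition is a derivation of the pathwise bound on $Q^{(n)}$ and its survival in the weak limit; the paper simply imports this from Hsu and Wang. Three small corrections are needed for the argument to go through as written.

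\emph{Drift sign.} Since $\partial_s\nabla u=-\frac12\nabla Lu+V\nabla u+u\nabla V$ and $d\mathbb V_s=-V(x_s)\mathbb V_s\,ds$, the residual drift of $M^{(n)}$ is $+\mathbb V_s u\,\ed V(Q^{(n)}_sv)\,ds$, not $-$.

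\emph{Auxiliary process.} The process must be $k_sM^{(n)}_s-\mathbb V_su(s,x_s)\int_0^s\langle Q^{(n)}_r\dot k_rv,/\!/_rdB_r\rangle$. The bracket contributes $+\mathbb V_s\dot k_s\langle\nabla u,Q^{(n)}_sv\rangle\,ds$, and only the minus sign cancels it against the $\dot k_sM^{(n)}_s\,ds$ coming from $d(k_sM^{(n)}_s)$. With your plus sign these terms double instead. With both signs corrected, subtracting $\int_0^s k_r\mathbb V_ru\,\ed V(Q^{(n)}_rv)\,dr$ and the $dl$-remainder gives a local martingale, and the endpoint comparison yields exactly the signs in \eqref{Bismut-F1}. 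Following your signs would reverse both terms.

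\emph{Bound on $Q^{(n)}$.} Nonnegativity of the penalization term suffices only where $\sigma\le0$. Where $\sigma>0$, the inequality $-\sigma|P_\partial Q^{(n)}v|^2\le-\sigma|Q^{(n)}v|^2$ fails. You must take $n\ge\sup_{\partial M\cap D}\sigma^+$, so that the leftover $(\sigma-n)|P_NQ^{(n)}v|^2\,dl$ is non-positive. This supremum is finite because $\sigma$ is dominated by the smallest eigenvalue of $\II$ on the compact set $\bar D\cap\partial M$. Only large $n$ matter for the limit, so this restriction costs nothing.
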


\begin{proof}
We first prove the formula for \(f\in\mathcal C_N(L_V)\) with sufficiently
bounded derivatives. The general case \(f\in\mathcal B_b(M)\) will be obtained
by a regularization argument. All the following estimates are first made
up to the localized time \(T\wedge\tau\).  On the relatively compact domain
\(D\), the functions \(V\) and \(\ed V\) are bounded from above in absolute
value, while the negative part of \(V\) is controlled globally by the Kato
assumption.  Thus the localization separates the purely local regularity
needed for differentiating \(V\) from the global integrability needed for the
Feynman--Kac weight.
Indeed, for \(\varepsilon\in(0,T)\), we apply the formula with time
\(T-\varepsilon\) to the regularized function \(P_\varepsilon^Vf\). By the
semigroup property,
\[
        P_T^Vf=P_{T-\varepsilon}^V(P_\varepsilon^Vf).
\]
The function \(P_\varepsilon^Vf\) has the required local regularity on
\((0,T]\times D\). Moreover, since $V^-\in \mathcal{K}(M)$, the
Khasminskii estimate for Kato potentials give
\[
        \sup_{x\in D}
        \mathbb E^x
        \exp\left(
        \int_0^T V(x_s)^-\,ds
        \right)<\infty.
\]

Consequently, for bounded \(f\),
\[
        |\mathbb V_T f(x_T)|
        \le \|f\|_\infty
        \exp\left(\int_0^T V^-(x_s)\,ds\right),
\]
and the right-hand side is uniformly integrable on compact time intervals.
Together with the localized bound for \(Q^{(n)}\) and the boundedness of
\(\ed V\) on \(D\), this gives an integrable dominating function for the
stochastic and drift terms in the formula.
 Hence the right-hand side converges as
\(\varepsilon\downarrow0\) by dominated convergence and the It\^o isometry,
while the left-hand side is always \(\ed P_T^Vf(v)\). Therefore it suffices to prove
the formula for \(f\in\mathcal C_N(L_V)\).

Set
\[
        N_t^{(n)}(v):=\langle \nabla P_{T-t}^Vf, \, Q_t^{(n)}v\rangle.
\]
By It\^o's formula and the identities
\[
        \ed Lf=(\operatorname{tr}\nabla^2+\nabla_Z)\ed f-\ed f(\Ric_Z^\sharp),
        \qquad
        \ed(Vf)=f\,\ed V+V\,\ed f,
\]
where the first identity is the Weitzenb\"ock formula, we get, modulo the
differential of a local martingale,
\begin{align*}
        dN_t^{(n)}(v)
        \stackrel{m}{=}&
        \ed P_{T-t}^Vf({\bf D}Q_t^{(n)}v)
        +\partial_t(\ed P_{T-t}^Vf)(Q_t^{(n)}v)\,dt
        \\
        &+\frac12(\operatorname{tr}\nabla^2+\nabla_Z)(\ed P_{T-t}^Vf)(Q_t^{(n)}v)\,dt
        +\frac12\nabla_N(\ed P_{T-t}^Vf)(Q_t^{(n)}v)\,dl_t
        \\
        =&\frac12\nabla_N(\ed P_{T-t}^Vf)(Q_t^{(n)}v)\,dl_t
        -\frac12\ed P_{T-t}^Vf(\II^\sharp(Q_t^{(n)}v))\,dl_t
        \\
        &-\frac n2\ed P_{T-t}^Vf(P_N(Q_t^{(n)}v))\,dl_t
        +V(x_t)N_t^{(n)}(v)\,dt
        +P_{T-t}^Vf(x_t)\ed V(Q_t^{(n)}v)\,dt.
\end{align*}
Since $NP_{T-t}^Vf=0$ on $\partial M$, the boundary terms combine as
\[
        \nabla_N(\ed P_{T-t}^Vf)(Q_t^{(n)}v)
        -\ed P_{T-t}^Vf(\II^\sharp(Q_t^{(n)}v))
        =
        \Hess_{P_{T-t}^Vf}(N,N)\langle Q_t^{(n)}v,N\rangle
\]
on $\partial M$. Recalling the definition of $\mathbb V_t$ in \eqref{eqn-V},
we obtain
\begin{equation}\label{Ito-formula1}
        d(\mathbb V_tN_t^{(n)}(v))
        \stackrel{m}{=}
        \frac12\mathbb V_t\Hess_{P_{T-t}^Vf}(N,N)
        \langle Q_t^{(n)}v,N\rangle\,dl_t
        +\mathbb V_tP_{T-t}^Vf(x_t)\ed V(Q_t^{(n)}v)\,dt.
\end{equation}

Let $k\in L^{1,2}([0,T];\mathbb P)$. Applying \eqref{Ito-formula1} to
$k_tv$ gives that
\begin{align*}
        &\mathbb V_tN_t^{(n)}(k_tv)
        -\int_0^t\mathbb V_s \ed P_{T-s}^Vf(Q_s^{(n)}(\dot k_sv))\,ds
        -\int_0^t\mathbb V_sP_{T-s}^Vf(x_s)\ed V(Q_s^{(n)}(k_sv))\,ds
        \\
        &\quad
        -\frac12\int_0^t\mathbb V_s\Hess_{P_{T-s}^Vf}(N,N)
        \langle Q_s^{(n)}(k_sv),N\rangle\,dl_s
\end{align*}
is a local martingale. Moreover, using the equation solved by
$P_{T-t}^Vf$, we have
\[
        d P_{T-t}^Vf(x_t)
        =V(x_t)P_{T-t}^Vf(x_t)\,dt
        +\langle \nabla P_{T-t}^Vf(x_t),/\!/_t\,dB_t\rangle.
\]
Therefore
\[
        \mathbb V_tP_{T-t}^Vf(x_t)
        =P_T^Vf(x)
        +\int_0^t\mathbb V_s
        \langle \nabla P_{T-s}^Vf(x_s),/\!/_s\,dB_s\rangle.
\]
It follows that
\[
        \int_0^t\mathbb V_s \ed P_{T-s}^Vf(Q_s^{(n)}(\dot k_sv))\,ds
        -\mathbb V_tP_{T-t}^Vf(x_t)
        \int_0^t\langle Q_s^{(n)}(\dot k_sv),/\!/_s\,dB_s\rangle
\]
is a local martingale. Hence
\begin{align}\label{martingale-1}
        &\mathbb V_tN_t^{(n)}(k_tv)
        -\mathbb V_tP_{T-t}^Vf(x_t)
        \int_0^t\langle Q_s^{(n)}(\dot k_sv),/\!/_s\,dB_s\rangle
        \notag\\
        &\quad
        -\int_0^t\mathbb V_sP_{T-s}^Vf(x_s)\ed V(Q_s^{(n)}(k_sv))\,ds
        \notag\\
        &\quad
        -\frac12\int_0^t\mathbb V_s\Hess_{P_{T-s}^Vf}(N,N)
        \langle Q_s^{(n)}(k_sv),N\rangle\,dl_s
\end{align}
is a local martingale.

Taking $t=T\wedge\tau$ in \eqref{martingale-1}, and using $k(0)=1$ and
$k_s=0$ for $s\geq T\wedge\tau$, we obtain
\begin{align}\label{formula2}
        -\ed P_T^Vf(v)
        =&\mathbb E^x\left[
        \mathbb V_{T\wedge\tau}P_{T-T\wedge\tau}^Vf(x_{T\wedge\tau})
        \int_0^{T\wedge\tau}
        \langle Q_s^{(n)}(\dot k_sv),/\!/_s\,dB_s\rangle
        \right]
        \notag\\
        &+\frac12\mathbb E^x\left[
        \int_0^{T\wedge\tau}\mathbb V_tk_t
        \Hess_{P_{T-t}^Vf}(N,N)
        \langle Q_t^{(n)}v,N\rangle\,dl_t
        \right]
        \notag\\
        &+\mathbb E^x\left[
        \int_0^{T\wedge\tau}\mathbb V_tP_{T-t}^Vf(x_t)
        \ed V(k_tQ_t^{(n)}v)\,dt
        \right].
\end{align}
By the strong Markov property of $X_t$ and the semigroup property of $P_t^V$,
\begin{align*}
        &\mathbb E^x\left[
        \mathbb V_{T\wedge\tau}P_{T-T\wedge\tau}^Vf(x_{T\wedge\tau})
        \int_0^{T\wedge\tau}
        \langle Q_s^{(n)}(\dot k_sv),/\!/_s\,dB_s\rangle
        \right]
        \\
        &\qquad=
        \mathbb E^x\left[
        \mathbb V_Tf(x_T)
        \int_0^{T\wedge\tau}
        \langle Q_s^{(n)}(\dot k_sv),/\!/_s\,dB_s\rangle
        \right],
\end{align*}
and similarly
\begin{align*}
        \mathbb E^x\left[
        \int_0^{T\wedge\tau}\mathbb V_tP_{T-t}^Vf(x_t)
        \ed V(k_tQ_t^{(n)}v)\,dt
        \right]
        =
        \mathbb E^x\left[
        \mathbb V_Tf(x_T)
        \int_0^{T\wedge\tau}\ed V(k_tQ_t^{(n)}v)\,dt
        \right].
\end{align*}
Substituting these identities into \eqref{formula2} yields
\begin{align}\label{formula3}
        -\ed P_T^Vf(v)
        =&\mathbb E^x\left[
        \mathbb V_Tf(x_T)
        \int_0^{T\wedge\tau}
        \langle Q_s^{(n)}(\dot k_sv),/\!/_s\,dB_s\rangle
        \right]
        \notag\\
        &+\frac12\mathbb E^x\left[
        \int_0^{T\wedge\tau}\mathbb V_tk_t
        \Hess_{P_{T-t}^Vf}(N,N)
        \langle Q_t^{(n)}v,N\rangle\,dl_t
        \right]
        \notag\\
        &+\mathbb E^x\left[
        \mathbb V_Tf(x_T)
        \int_0^{T\wedge\tau}\ed V(k_tQ_t^{(n)}v)\,dt
        \right].
\end{align}

On the localized interval $[0,T\wedge\tau]$, the Hessian term is bounded for
$f\in\mathcal C_N(L_V)$ and $t<T$. Moreover, by \eqref{normal-vanish},
\[
        \int_0^{T\wedge\tau}|P_N(Q_s^{(n)})|^2\,dl_s\to0.
\]
Therefore the boundary Hessian term in \eqref{formula3} converges to zero. 

Indeed, on \([0,T\wedge\tau]\) the Hessian factor is bounded after the
initial regularization, and the normal factor satisfies
\(\langle Q_t^{(n)}v,N(x_t)\rangle=P_N(Q_t^{(n)}v)\).  Hence Cauchy's
inequality with respect to \(dl_t\) and \eqref{normal-vanish} give the desired
vanishing.
Passing to the subsequence $n_k$ for which the weak convergence of
$Q^{(n_k)}$ holds, and then letting $k\to\infty$, we get
\[
\begin{aligned}
        \ed P_T^Vf(v)
        =&-\mathbb E^x\left[
        \mathbb V_Tf(x_T)
        \int_0^{T\wedge\tau}
        \left\langle Q_t(\dot k_tv),/\!/_t\,dB_t\right\rangle
        \right]
        \\
        &-\mathbb E^x\left[
        \mathbb V_Tf(x_T)
        \int_0^{T\wedge\tau}\ed V(k_tQ_t(v))\,dt
        \right].
\end{aligned}
\]
This proves the formula for  $f\in\mathcal C_N(L_V)$. \end{proof}

\section{Gradient Estimates for Neumann Semigroups with Convex Boundary}

\subsection{The Main Estimate}
For fixed $o\in M$, we write $\rho_o(p):=\rho(o, p)$ for simplicity.
We begin  our discussion  with the following curvature condition:

\begin{mdframed}  Condition {\bf (C1)}:  There exists a non-negative function $\ell \in C^1(\R^+)$ with $\ell(r)\leq c_0(1+r^2)$ for some non-negative constant $c_0$, and  $m>d$  such that
\begin{align}\label{CD-condition}
	\Ric_Z^{m,d}:=\Ric-\nabla Z^\flat-\frac{Z\otimes Z}{m-d}\geq -\ell (\rho_o(\cdot)),  \tag{{\bf CD}$(m, \ell(\rho_o))$}
\end{align}
and  there exists a non-negative function $h\in C^1(M)$ such that 
\begin{align}\label{c1}
\Ric(x)-\nabla Z^\flat(x)\geq -h(x),\quad \II\geq 0.
\end{align}
\end{mdframed}
 
Note that the curvature-dimension condition \eqref{CD-condition} implies \eqref{CD-ineq}, which  ensures the non-explosion of the
process $x_{\cdot}$ (see \cite[Theorem 2.1.1]{Wbook14}), i.e. $\eta_x=\infty$ and it also implies the strong 1-completeness of the process, which further implies that  the global version of the Bismut formula in \eqref{Bismut-F1} holds, see Lemma \ref{lem} below.

Recall $L=\Delta+Z$ and  define  the generator $L^h:=\frac{1}{2}(L-h), \ h \in C^1(M)$, and let $P_t^h$ be the associated Feynman--Kac semigroup, i.e. 
\begin{align*}
P_t^hf(x)=\E^x\lf[\mathbb{H}_t f(x_t)\r],
\end{align*}
where 
\begin{equation}
    \mathbb{H}_t:=\e^{-\frac{1}{2}\int_0^th(x_s)\, ds}.
\end{equation}
\begin{lemma}\label{lem}
   Fix  $x\in M$ and $v\in T_xM$, let $x_{\cdot}^x$ be a $L$-diffusion starting at $x$.  Under the curvature condition $(\bf C1)$ and 
   $$  \mathbb{E}[|\nabla h|^2(x_t^x)]<\infty,$$
  for  $k\in C^1([0,\, T])$ such that $ k(0)=1, k(T)=0$, we have that for $f\in \mathcal{B}_b(M)$,
\begin{equation}\label{s1.fy.bis.loc}
\aligned
    \Braket{\nabla P_T^hf, v}&=-\mathbb{E}\lf[\mathbb{H}_Tf(x_T)\int_0^T\Braket{Q_s(\dot{k}(s)v), \ptr_sdB_s}\r]\\
    &\quad -\frac{1}{2}\mathbb{E}\lf[\mathbb{H}_Tf(x_T)\int_0^T\Braket{\nabla h , Q_s(k(s)v)}\, ds\r],
\endaligned
\end{equation}
where \( Q_t : T_xM \to T_{x_t^x}M \) is the same as in Theorem \ref{BisF-Neumann}.
\end{lemma}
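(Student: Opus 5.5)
The plan is to obtain \eqref{s1.fy.bis.loc} as the global limit of the localized formula \eqref{Bismut-F1} of Theorem \ref{BisF-Neumann}, applied with $V=\frac12 h$. Then $\mathbb V_t=\mathbb H_t$, $P^V_t=P^h_t$ and $\ed V=\frac12\ed h$. Since $h\geq0$, we have $V^-=0\in\mathcal K(M)$, and $V\in C^1(M)$, so the theorem applies. Under $\Ric_Z\geq -h$ and $\II\geq0$, the functional $Q$ satisfies
\[
|Q_{t\wedge\tau}|\leq \exp\Bigl(\tfrac12\int_0^{t\wedge\tau}h(x_s)\,ds\Bigr)=\mathbb H_{t\wedge\tau}^{-1}.
\]
This pathwise cancellation between $Q$ and the Feynman--Kac weight is the structural point that makes unbounded $h$ harmless.

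First take an exhaustion $D_n\uparrow M$ by relatively compact regular domains containing $x$, with exit times $\tau_n$. By (C1) and \eqref{CD-condition}, the process is non-explosive, so $\tau_n\uparrow\infty$ a.s. and in particular $\tau_n\wedge T\to T$. For a fixed deterministic $k\in C^1([0,T])$, the localized formula needs an adapted $k^{(n)}$ with $k^{(n)}_0=1$ and $k^{(n)}_s=0$ for $s\geq T\wedge\tau_n$. A natural choice is a time change. Fix a small $\delta>0$ and put $\tilde\tau_n=\tau_n\wedge(T-\delta)$-type stopping times, or use the standard construction $k^{(n)}_s=k(\,\int_0^s \xi_n(x_r)^{-2}dr\,)$-style time changes as in Thalmaier's localization. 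It is simpler to note that the right-hand side of \eqref{s1.fy.bis.loc} is linear in $\dot k$ and continuous in $k$ in $L^2$. So I would take $k^{(n)}_s=k(s)$ on $\{s<T\wedge\tau_n\}$, suitably modified to reach $0$ at $\tau_n$ by $k^{(n)}_s=k(s)\wedge\bigl(\text{linear decay after }\tau_n\bigr)$, and control the error on the event $\{\tau_n\le T\}$.

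Second, pass to the limit $n\to\infty$. The martingale term is bounded by Itô's isometry:
\[
\mathbb E\Bigl|\mathbb H_Tf(x_T)\int_0^T\langle Q_s\dot k v,\ptr_s dB_s\rangle\Bigr|\le\|f\|_\infty\, \mathbb E\Bigl[\int_0^T \mathbb H_T^2|Q_s|^2|\dot k_s|^2ds\Bigr]^{1/2}.
\]
Here the key is that $\mathbb H_T|Q_s|\leq \mathbb H_T\mathbb H_s^{-1}\le1$ since $h\ge0$. The weight is not adapted to $s$, so I would instead condition: write $\mathbb H_Tf(x_T)=\mathbb H_s\,\mathbb E[\mathbb H_{s,T}f(x_T)\mid\mathcal F_s]$, or equivalently replace $\mathbb H_Tf(x_T)$ by the martingale $\mathbb H_tP^h_{T-t}f(x_t)$ as in the proof of Theorem \ref{BisF-Neumann}. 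This makes the integrand $\mathbb H_s|Q_s|\le1$, which gives a uniform $L^2$ bound and the convergence of the truncation error $\mathbf 1_{\{\tau_n\le T\}}$ to zero by dominated convergence. The drift term is bounded similarly by $\frac12\|f\|_\infty\int_0^T|k(s)|\,\mathbb E|\nabla h|(x_s)\,ds$. This is finite by the hypothesis $\mathbb E|\nabla h|^2(x_t)<\infty$, assuming local-uniformity in $t$. Dominated convergence then removes the localization. The left-hand side does not depend on $n$.

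The main obstacle is the non-adaptedness of $\mathbb H_T$ relative to $Q_s$ inside the expectations, together with the construction of the adapted cutoff $k^{(n)}$ compatible with the constraint $k^{(n)}=0$ after $\tau_n$. The conditioning and martingale trick handles the former. For the latter, I would first try the deterministic $k$ stopped at $\tau_n$, with a linear ramp to zero on $[\tau_n,\tau_n+\epsilon]\cap[0,T]$, absorbing the ramp into the localization on $D_{n+1}$. I would then show that the ramp's contribution is $O(\mathbb P(\tau_n\le T)^{1/2}\epsilon^{-1/2})$. Choosing $\epsilon$ fixed and then letting $n\to\infty$ makes this vanish.
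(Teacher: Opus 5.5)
Your framing is right: apply Theorem~\ref{BisF-Neumann} with $V=\frac12h$, use $\mathbb{H}_T|Q_s|\le 1$, and remove the localization. Your handling of the non-adaptedness of $\mathbb{H}_T$ can also be made rigorous. Put $I_t=\int_0^t\langle Q_s\dot k_sv,\ptr_s dB_s\rangle$ with $|v|=1$; It\^o's formula for $\mathbb{H}_t^2I_t^2$ together with $h\ge0$ gives $\mathbb{E}[\mathbb{H}_T^2I_T^2]\le\mathbb{E}\int_0^T\mathbb{H}_t^2|Q_t|^2|\dot k_t|^2\,dt$.

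The genuine gap is the step you yourself call the main obstacle: your cutoff is not admissible. Theorem~\ref{BisF-Neumann} needs an adapted $k^{(n)}\in L^{1,2}$ with $k^{(n)}_s=0$ for $s\ge T\wedge\tau_D$, for a \emph{fixed} relatively compact domain $D$.
\begin{itemize}
\item A process equal to $k(s)$ up to $\tau_n$ that then ramps linearly to $0$ on $[\tau_n,\tau_n+\epsilon]$ fails this for $D=D_{n+1}$. On the event $\{\tau_n<T,\ \tau_{n+1}<\tau_n+\epsilon\}$, which has positive probability, the path leaves $D_{n+1}$ before the ramp is finished.
\item More generally, during $[\tau_n,\tau_n+\epsilon]$ the path can leave any compact set with positive probability, so no relatively compact $D$ works.
\item If you instead stop the ramp at $\tau_{n+1}$, or simply stop $k$ at $\tau_n$, then $k^{(n)}$ jumps. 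The martingale argument then produces the extra terminal term $\mathbb{E}\bigl[\mathbb{H}_{\tau}k(\tau)\,\ed P^h_{T-\tau}f(Q_\tau v);\,\tau<T\bigr]$, with $\tau=\tau_n$ or $\tau_{n+1}$. Sending it to $0$ requires an a priori bound on $|\nabla P^h_tf|$ at the exit points, which is exactly what is not yet available.
\end{itemize}
The $L^2$-continuity of the right-hand side in $\dot k$ does not help, because the issue is admissibility of the cutoff, not approximation. Your $O(\mathbb{P}(\tau_n\le T)^{1/2}\epsilon^{-1/2})$ estimate presupposes admissibility.

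The paper supplies the missing ingredient in two steps.

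First, it uses the Thalmaier--Wang cutoff. Here $k_n$ is a time change built from $\beta_n\in C^2(D_n)$ with $\beta_n(x)=1$ and $\beta_n|_{\partial D_n}=0$. It decelerates as $x_t$ approaches $\partial D_n$, so it genuinely vanishes by $T\wedge\tau_n$. Its energy satisfies $\mathbb{E}\int_0^T|\dot k_n|^2\,dt\le c(\beta_n)/(1-\e^{-c(\beta_n)T})$, where $c(\beta_n)=\sup_{D_n}\{-\beta_nL\beta_n+3|\nabla\beta_n|^2\}$. Lemma~\ref{lem3.3}, the Laplacian comparison under $\mathrm{CD}(m,\ell(\rho_o))$ with $\ell$ of at most quadratic growth, then gives $\limsup_n\mathbb{E}\int_0^T|\dot k_n|^2\,dt\le 1/T+c$. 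This is the use of (C1) beyond non-explosion that your proposal never makes. Without it, the energies of admissible cutoffs need not stay bounded as $D_n\uparrow M$. Combined with $\mathbb{H}_T|Q_t|\le1$, this yields an a priori bound on $|\nabla P^h_tf|$ for $t\in[\epsilon,T]$.

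Second, with that bound in hand, the paper runs the martingale identity for the deterministic $k$ on $[0,T-\epsilon]$, with $k(T-\epsilon)=0$, stopped at $(T-\epsilon)\wedge\tau_m$. The terminal term is now dominated because $T-(T-\epsilon)\wedge\tau_m\ge\epsilon$, and it vanishes as $m\to\infty$. The penalization in $Q^{(n)}$ is then removed, and finally $\epsilon\to0$.

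To repair your argument you need one of these two ingredients: an admissible adapted cutoff with $n$-uniform energy, or an a priori gradient bound that kills the jump term.
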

\begin{proof}
 Let $(D_n)_{n\ge1}$ be a smooth relatively compact exhaustion of $M$ such that $x\in D_1$, $\overline D_n\subset D_{n+1}$ and $\bigcup_nD_n=M$. Denote by $\tau_n$ the first exit time of $x_t$ from $D_n$.  
For $v\in T_{x} M$ and $k_n\in L^{1,2}([0,T];\mathbb{P})$  a non-negative adapted process  such that $k_n(0)=1$, $k_n(s)=0$ for $s\geq T\wedge \tau_{n}$,
\begin{align}\label{Bismut-F}
    \ed P_T^h f(v)= -\E\l[ \mathbb{H}_T f(x_T)\int_0^{T} \langle Q_t(\dot{k}_n(t)v), /\!/ _t \,dB_t\rangle\r]
    -\frac{1}{2}\E\l[\mathbb{H}_T f(x_T)\int_0^{T} k_n(t) \, \ed h (Q_t(v))\, dt \r]. 
\end{align} 
We first see that  \(|\mathbb{H}_T Q_t |\leq 1,\ t\leq T\) 
and
\begin{align*}
    |\ed P_T^h f|&\leq \Big|\E\l[ \mathbb{H}_T f(x_T)\int_0^{T} \langle Q_t(\dot{k}_n(t)v), /\!/ _t \,dB_t\rangle\r]\Big|+
   \frac{1}{2} \Big|\E\l[\mathbb{H}_T f(x_T)\int_0^{T} k_n(t) \, \ed h (Q_t(v))\, dt \r]\Big|\\
   &\leq \|f\|_{\infty} \E\l[\int_0^{T}|\dot{k}_n(t)|^2 \,d t\r]^{1/2}+\frac{1}{2}\|f\|_{\infty} T \sup_{s\in [0,T]}\E |\nabla h|^2(x_s).
\end{align*} 
As discussed in \cite{TW98}, there exists $\beta_n \in C^2(D_n)$ such that $\beta_n(x)=1$, $\beta_n|_{\partial D_n}=0$ and $c(\beta_n):=\sup_{D_n}\{- \beta_nL\beta_n+3 |\nabla \beta_n|^2\}<\infty$,
\begin{align}\label{beta-n}
\E\l[\int_0^{T}|\dot{k}_n(t)|^2 \,d t\r]\leq \frac{c(\beta_n)}{1-\e^{-c(\beta_n)T}}.
\end{align}
By Lemma \ref{lem3.3} below, it is easy to see that  under the curvature condition {\bf (C1)}, when $n$ tends to $\infty$, there
exists a constant $c>0$ such that
\begin{align*}
\limsup_{n\rightarrow \infty}\E\l[\int_0^{T}|\dot{k}_n(t)|^2 \,d t\r]\leq \frac{1}{T}+c.
\end{align*}
 Then  we  know that
$|\nabla P_{\cdot}^hf|$ is bounded on $[\epsilon, T] \times M$ for any small $\epsilon>0$.  For small $\epsilon\in (0,\,T)$, let $k$ be a function
in $C^1([0,T-\epsilon])$. 
Note that 
\begin{align*}
   &\mathbb{H}_t N_t^{(n)}(k_t v) - \mathbb{H}_t P_{T-t}^hf(x_t) \int_0^t \langle Q_s^{(n)}(\dot{k}_s v), /\!/_s \,dB_s \rangle  - \frac{1}{2}\int_0^t \mathbb{H}_s P_{T-s}^h f(x_s) \ed h(Q_s^{(n)}(k_s v)) \,ds 
   \\
   &-\frac{1}{2} \int_0^t \mathbb{H}_s \Hess_{P_{T-s}^hf}(N,N)\langle Q_s^{(n)}(k_sv),\, N \rangle \, dl_s
\end{align*}
is a local martingale, where $N_t^{(n)}(v)=\ed P_{T-t}^h f(Q_t^{(n)}(v))$ for $v\in T_xM$.  Then
\begin{align*}
\ed P_t^{h}f(v)=&\E\l[k_{{(T-\epsilon)}\wedge \tau_m}\mathbb{H}_{(T-\epsilon)\wedge \tau_m} \ed P_{T-(T-\epsilon)\wedge \tau_m}^{h}f(Q^{n}_{(T-\epsilon)\wedge \tau_m}(v))  \r]\\
&-\E \l[\mathbb{H}_{T} f(x_T) \int_0^{{(T-\epsilon)}\wedge \tau_m} \langle Q_s^{(n)}(\dot{k}_s v), /\!/_s \,dB_s \rangle\r]\\
&-\frac{1}{2}\E\l[\mathbb{H}_T f(x_T) \int_0^{{(T-\epsilon)}\wedge \tau_m } \ed h(Q_s^{(n)}(k_s v))\, ds \r]\\
&-\frac{1}{2} \E \l[\int_0^{{(T-\epsilon)}\wedge \tau_m } \mathbb{H}_s \Hess_{P_{T-s}^h f}(N,N)\langle Q_s^{(n)}(k_sv),\, N \rangle \, dl_s\r].
\end{align*}
By the condition $\E h^2(x_s)< \infty$ and $\E |\nabla h|^2(x_s)< \infty$,  we  first let $m$ tend to $\infty$ and then let $n$ tend to $\infty$ to  obtain
\begin{align*}
\ed P_t^{h}f(v)&=\lim_{m\rightarrow \infty}\E\l[k_{{(T-\epsilon)}\wedge \tau_m}\mathbb{H}_{(T-\epsilon)\wedge \tau_m} \ed P_{T-(T-\epsilon)\wedge \tau_m}^{h}f(Q_{(T-\epsilon)\wedge \tau_m}(v))  \r]\\
&\quad-\lim_{m\rightarrow \infty}\E \l[\mathbb{H}_{T} f(x_T) \int_0^{{(T-\epsilon)}\wedge \tau_m} \langle Q_s(\dot{k}_s v), /\!/_s \,dB_s \rangle\r]\\
&\quad-\frac{1}{2}\lim_{m\rightarrow \infty}\E\l[\mathbb{H}_T f(x_T) \int_0^{{(T-\epsilon)}\wedge \tau_m } \, \ed h(Q_s(k_s v))\, ds \r]\\
&=-\E \l[\mathbb{H}_{T} f(x_T) \int_0^{{(T-\epsilon)}} \langle Q_s(\dot{k}_s v), /\!/_s \,dB_s \rangle\r]-\frac{1}{2}\E\l[\mathbb{H}_T f(x_T) \int_0^{{(T-\epsilon)} } \ed h(Q_s(k_s v))\, ds \r].
\end{align*}
We then get the 
global Bismut formula for $\nabla P_{\cdot}^hf $ for $ f\in \mathcal{B}_b(M)$ by letting $\epsilon$ tend to $0$. 

\end{proof}

We use the   curvature-dimension CD$(m,\ell(\rho_x(\cdot)))$ in the condition (\textbf{C1}) to  obtain an estimate of $c(\beta_n)$ in the inequality \eqref{beta-n} as follows.
\begin{lemma} \label{lem3.3}
Suppose  the curvature-dimension condition
\(CD(m,\ell(\rho_o))\) holds for some non‑negative function \( \ell \in C^1(\mathbb{R}^+) \) with \( \ell(r)\leq c_0(1+r^2) \) for some non‑negative constant $c_0$. Then for every relatively compact regular domain
\(D\subset M\) containing \(x\), there exists a function
\(\beta\in C^2(D)\) such that \(0\leq \beta\leq1\),
\(\beta(x)=1\), \(\beta|_{\partial D}=0\), and
\[
        c(\beta):=\sup_D\{-\beta L\beta+3|\nabla\beta|^2\}<\infty.
\]
Moreover, if \(D\subset B(x,\delta_x)\), then
\[
        c(\beta)
        \leq
        \left(\frac{\pi}{2\delta_x}\right)^2
        \left\{
        \frac{m}{2}+ \frac{1}{2}
        \sqrt{
        m^2+
        4mc_0\bigl(1+(\rho_o(x)+\delta_x)^2\bigr)\delta_x^2}
        +3
        \right\}
\]
for some constant \(c_0>0\).
\end{lemma}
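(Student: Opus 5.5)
Following Thalmaier--Wang \cite{TW98}, I would take $\beta=\cos\bigl(\tfrac{\pi\rho_x}{2\delta_x}\bigr)$ on the geodesic ball and then control $L\rho_x$ through a Laplacian comparison derived from the curvature-dimension condition. For a general relatively compact regular domain $D$, first pick $\delta_x>0$ with $B(x,\delta_x)$ relatively compact, and use a smooth modification of $\rho_x$ inside $D\cap B(x,\delta_x)$ that vanishes on $\partial D$ (for instance by composing with a boundary-defining function). Finiteness of $c(\beta)$ then follows from smoothness and compactness. The quantitative bound I aim for is the case $D\subset B(x,\delta_x)$, with $\beta=\cos(\pi\rho_x/(2\delta_x))$ suitably adjusted to vanish on $\partial D$.

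Set $\theta=\pi\rho_x/(2\delta_x)\in[0,\pi/2]$. Then
\[
|\nabla\beta|^2=\bigl(\tfrac{\pi}{2\delta_x}\bigr)^2\sin^2\theta\le\bigl(\tfrac{\pi}{2\delta_x}\bigr)^2,
\qquad
-\beta L\beta=\bigl(\tfrac{\pi}{2\delta_x}\bigr)^2\cos^2\theta+\tfrac{\pi}{2\delta_x}\cos\theta\,\sin\theta\,L\rho_x
\]
outside the cut locus, and the cut locus is handled in the usual barrier/distributional sense. This reduces the whole problem to an upper bound on $\sin\theta\,L\rho_x$.

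On $B(x,\delta_x)$ we have $\rho_o\le\rho_o(x)+\delta_x$, so \eqref{CD-condition} yields the constant bound $\Ric_Z^{m,d}\ge -K_x$ with $K_x:=c_0(1+(\rho_o(x)+\delta_x)^2)$. The standard Bakry--Qian comparison for $\Ric_Z^{m,d}\ge -K_x$ gives
\[
L\rho_x\le\sqrt{(m-1)K_x}\coth\Bigl(\rho_x\sqrt{K_x/(m-1)}\Bigr)\le \frac{m-1}{\rho_x}+\sqrt{(m-1)K_x}.
\]
Using $\sin\theta\le\theta=\pi\rho_x/(2\delta_x)$, the singular part contributes at most $(\pi/(2\delta_x))^2(m-1)$, and the remaining part at most $(\pi/(2\delta_x))\sqrt{(m-1)K_x}$.

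The main obstacle is matching the exact constant $\tfrac m2+\tfrac12\sqrt{m^2+4mK_x\delta_x^2}$. To get it, I would integrate the Riccati inequality $\dot u+u^2/m\le K_x$ for $u=L\rho_x$ (the weaker version with dimension $m$ suffices). This gives $u\le \frac{m}{\rho}+\sqrt{mK_x}$, and the sharper bound comes from finding the largest root $\lambda$ of $\lambda^2-m\lambda-mK_x\delta_x^2=0$, which controls $\rho_x u\le\lambda$ on $\rho_x\le\delta_x$. Multiplying by $\sin\theta\cos\theta\le\theta$ and $\rho_x^{-1}$ then yields the bracket $\tfrac12(m+\sqrt{m^2+4mK_x\delta_x^2})$. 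Adding the bounds $\cos^2\le1$ and $3|\nabla\beta|^2\le3(\pi/(2\delta_x))^2$ completes the estimate.
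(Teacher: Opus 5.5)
Your construction is essentially the paper's: the cutoff $\beta=\cos(\pi\rho_x/(2\delta_x))$, a Qian-type Laplacian comparison with effective dimension $m$, and barrier handling of the cut locus. Freezing $\ell(\rho_o)$ to the constant $K_x=c_0(1+(\rho_o(x)+\delta_x)^2)$ on $B(x,\delta_x)$ before comparing is, if anything, cleaner than the paper's pointwise use of $\ell(\rho_o(p))$, since the comparison involves the curvature along the whole minimizing geodesic.

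However, your final bookkeeping does not give the stated constant. Put $a=\pi/(2\delta_x)$ and $\lambda=\tfrac m2+\tfrac12\sqrt{m^2+4mK_x\delta_x^2}$. From $\rho_xL\rho_x\le\lambda$ you get $a\cos\theta\sin\theta\,L\rho_x\le a^2\lambda$. Adding $a^2\cos^2\theta\le a^2$ and $3|\nabla\beta|^2\le 3a^2$ separately then yields $c(\beta)\le a^2(\lambda+4)$, not $a^2(\lambda+3)$. The missing unit arises because the Riccati inequality with $m$ in place of $m-1$ only gives $\rho_xL\rho_x\le\lambda$. There are two one-line repairs. Either keep the factor $\sin^2\theta$ in $|\nabla\beta|^2$ and use
\[
a^2\cos^2\theta+3a^2\sin^2\theta=a^2\bigl(1+2\sin^2\theta\bigr)\le 3a^2 ,
\]
or, as the paper does, use the sharper inequality $\tfrac12L\rho_x^2=\rho_xL\rho_x+1\le\lambda$. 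This is Qian's bound; it follows from the Bochner formula applied to $\rho_x^2/2$ (for which $|\nabla(\rho_x^2/2)|^2=\rho_x^2$), or from your Riccati comparison with the genuine constant $m-1$. Then $a\cos\theta\sin\theta\,L\rho_x\le a^2(\lambda-1)$ absorbs the term $a^2\cos^2\theta$, and the separate bound $3a^2$ suffices.

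Separately, in the quantitative part you should not ``suitably adjust'' the cosine to a smaller $D$. Take $D=B(x,\delta_x)$, which is what the paper's $\beta$ implicitly does, since it vanishes exactly on $\partial B(x,\delta_x)$. For $D$ much smaller than the ball, no choice of $\beta$ can satisfy the bound. Indeed, $-\beta L\beta+3|\nabla\beta|^2\le c(\beta)$ gives $\tfrac12L\beta^2=\beta L\beta+|\nabla\beta|^2\ge -c(\beta)$. Dynkin's formula up to $\tau_D$ then forces $c(\beta)\ge 1/\mathbb E^x\tau_D$, which is of order $\varepsilon^{-2}$ for $D=B(x,\varepsilon)$ in the interior. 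Your compactness argument for $c(\beta)<\infty$ in the general case is fine.
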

\begin{proof}
We first argue formally with
\[
        \beta(p)=\cos\left(\frac{\pi \rho(x,p)}{2\delta_x}\right).
\]
The function \(r\) is smooth outside \(\{x\}\cup\operatorname{Cut}(x)\), and on this
set we have
\[
        |\nabla\beta|\leq \frac{\pi}{2\delta_x}.
\]
Moreover, by the Laplacian comparison theorem for the operator \(L=\Delta+Z\)
under the curvature-dimension condition
\[
        \operatorname{Ric}^{m,d}_Z\geq -\ell(\rho_o),
\]
one has, outside the cut locus,
\begin{align}\label{CD-ineq}
        L\rho^2(x,\cdot)(p)
        \leq
        m\left(
        1+\sqrt{1+\frac{4\ell(\rho_o(p))\rho^2(x,p)}{m}}
        \right);
\end{align}
see \cite[Theorem~1]{Qian}. Therefore,
\[
\begin{aligned}
        -\beta L\beta
        &\leq
        \frac{\pi}{2\delta_x \, \rho(x,p) }
        \sin\left(\frac{\pi \rho(x,p) }{2\delta_x}\right)
        \left(
        \frac12 L\rho^2(x,\cdot)(p)-1
        \right)
        +
        \left(\frac{\pi}{2\delta_x}\right)^2        \\
        &\leq
        \frac{m}{2}
        \left(\frac{\pi}{2\delta_x}\right)^2
        \left(
        1+
        \sqrt{1+\frac{4\ell(\rho_o(p))\rho^2(x,p)}{m}}
        \right).
\end{aligned}
\]
The above computation is initially carried out outside
\(\{x\}\cup\operatorname{Cut}(x)\). The cut-locus difficulty is handled in the
standard way: by Calabi's trick, the Laplacian comparison inequality extends
to all of \(D\) in the barrier sense; see \cite[Section~2.2]{Wbook14} and
\cite[Chapter~2]{Petersen2016}. Hence the estimate for \(-\beta L\beta\) also
holds in the barrier sense. By smoothing distance-type cutoff functions with
barrier inequalities, see \cite{GreeneWu1979} or
\cite[Section~2.2]{Wbook14}, we may choose a genuine \(C^2\) cutoff, still
denoted by \(\beta\), satisfying \(\beta(x)=1\), \(\beta|_{\partial D}=0\), and
\[
        c(\beta):=\sup_D\{-\beta L\beta+3|\nabla\beta|^2\}
        \leq
        \left(\frac{\pi}{2\delta_x}\right)^2
        \left\{
        \frac{m}{2}+
       \frac{1}{2} \sqrt{
        m^2+4m\bigl(\sup_{p\in D}\ell(\rho_o(p))\bigr)\delta_x^2}
        +3
        \right\}
        +\varepsilon 
\]
for sufficiently small $\varepsilon$.
Letting \(\varepsilon\downarrow0\), we obtain the asserted bound.
Since \(D\subset B(x,\delta_x)\), we have
\[
        \rho_o(p)\leq \rho_o(x)+\delta_x,\qquad p\in D.
\]
Moreover, there exists \(c_0>0\) such that
\[
        \ell(r)\leq c_0(1+r^2),\qquad r\geq0.
\]
Consequently,
\[
        \sup_{p\in D}\ell(\rho_o(p))
        \leq
        c_0\bigl(1+(\rho_o(x)+\delta_x)^2\bigr).
\]
Substituting this bound into the preceding estimate gives
\[
        c(\beta)
        \leq
        \left(\frac{\pi}{2\delta_x}\right)^2
        \left\{
        \frac{m}{2}+
       \frac{1}{2} \sqrt{
        m^2+
        4mc_0\bigl(1+(\rho_o(x)+\delta_x)^2\bigr)\delta_x^2}
        +3
        \right\}.
\]
Changing the constant \(c_0\) if necessary yields the desired estimate.
\end{proof}

Combining Lemma \ref{lem} and Lemma \ref{lem3.3}, we have the gradient estimate for $P_{\cdot}^hf$. 

\begin{lemma}\label{s1.lem.key}
Assume Condition {\bf (C1)} and $\sup_{s\in [0,T]}\E^x |\nabla h|^2(x_s) \leq \theta_1(x)$ hold. Then for any \(T > 0\) and $f\in \mathcal{B}_b(M)$, 
\[
|\nabla P_T^h f|(x) \leq \left[ \frac{1}{\sqrt{T}} + \frac{T}{4} \sqrt{\theta_1(x)} \right] (P_T f^2)^{1/2},\quad x\in M.
\]
\end{lemma}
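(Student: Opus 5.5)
We apply the global Bismut formula of Lemma~\ref{lem} with the deterministic choice $k(s)=\frac{T-s}{T}$, so that $k(0)=1$, $k(T)=0$, $\dot k(s)=-1/T$ and $0\le k\le 1$. For $v\in T_xM$ with $|v|=1$ this gives
\[
\langle\nabla P_T^hf,v\rangle=\frac1T\,\mathbb E\Big[\mathbb H_Tf(x_T)\int_0^T\langle Q_s v,\ptr_s dB_s\rangle\Big]-\frac12\,\mathbb E\Big[\mathbb H_Tf(x_T)\int_0^T k(s)\langle\nabla h,Q_sv\rangle\,ds\Big],
\]
and the two terms are estimated separately. The basic pointwise ingredient is that, under Condition~{\bf (C1)}, Theorem~\ref{BisF-Neumann} applies with $K=-h$ and $\sigma=0$, so $|Q_s|\le \exp\big(\frac12\int_0^s h(x_r)\,dr\big)$. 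Since $h\ge0$, $\mathbb H_T\le\mathbb H_s$, and hence $\mathbb H_T|Q_s|\le 1$ for all $s\le T$.

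For the stochastic-integral term, the weight $\mathbb H_T$ cannot be moved inside the It\^o integral. We therefore use Cauchy--Schwarz in the form
\[
\Big|\mathbb E\Big[\mathbb H_Tf(x_T)\int_0^T\langle Q_sv,\ptr_s dB_s\rangle\Big]\Big|\le (P_Tf^2)^{1/2}\Big(\mathbb E\Big[\mathbb H_T^2\Big(\int_0^T\langle Q_sv,\ptr_sdB_s\rangle\Big)^2\Big]\Big)^{1/2}.
\]
The main obstacle is to show that the last factor is at most $\sqrt T$. First try: write $\mathbb H_T\le \mathbb H_s$ and note that $M_t:=\int_0^t\langle Q_sv,\ptr_s dB_s\rangle$ satisfies $d(\mathbb H_t^2M_t^2)=\mathbb H_t^2\big(2M_t\,dM_t+|Q_tv|^2dt-hM_t^2\,dt\big)$. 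The term $-hM_t^2$ is nonpositive, so after localization $\mathbb E[\mathbb H_T^2M_T^2]\le\mathbb E\int_0^T\mathbb H_t^2|Q_tv|^2\,dt\le T$. The localization uses the stopping times $\tau_n$ and Fatou's lemma; the local martingale part is a true martingale up to $\tau_n$ because $Q$ is bounded there. This yields the contribution $\frac{1}{T}\sqrt{T}(P_Tf^2)^{1/2}=\frac{1}{\sqrt T}(P_Tf^2)^{1/2}$.

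For the drift term, we use $\mathbb H_T|Q_s|\le1$ and $k\le1$ to bound it by $\frac12\mathbb E\big[|f(x_T)|\int_0^Tk(s)|\nabla h|(x_s)\,ds\big]$. By Cauchy--Schwarz, first in $\omega$ and then in time,
\[
\le\frac12(P_Tf^2)^{1/2}\Big(\mathbb E\Big(\int_0^Tk(s)|\nabla h|(x_s)ds\Big)^2\Big)^{1/2}\le\frac12(P_Tf^2)^{1/2}\Big(\int_0^Tk^2ds\Big)^{1/2}\Big(T\theta_1(x)\Big)^{1/2}.
\]
Here $\int_0^Tk^2=T/3$, which would give a factor $\frac{T}{2\sqrt3}$. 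To reach the stated constant $\frac T4$, we instead bound by Minkowski: $\int_0^Tk(s)\big(\mathbb E|\nabla h|^2(x_s)\big)^{1/2}ds\le \sqrt{\theta_1}\int_0^T k\,ds=\frac T2\sqrt{\theta_1}$, giving the drift contribution $\frac T4\sqrt{\theta_1(x)}(P_Tf^2)^{1/2}$. Summing the two contributions and taking the supremum over unit $v$ gives the claim. The integrability hypothesis in Lemma~\ref{lem} follows from the assumed bound by $\theta_1$.
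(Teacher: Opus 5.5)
Your proposal is correct and follows the paper's proof. You apply Lemma~\ref{lem} with $k(s)=\frac{T-s}{T}$, then use Cauchy--Schwarz on the stochastic and drift terms via $\mathbb H_T|Q_s|\le 1$, and $\int_0^T k\,ds=\frac T2$ gives the constant $\frac T4$. Your It\^o computation for $\mathbb H_t^2M_t^2$, where you drop the nonpositive term $-h\,\mathbb H_t^2M_t^2\,dt$ and localize, justifies a step the paper compresses into ``Cauchy--Schwarz and It\^o isometry'', since $\mathbb H_T$ cannot literally be moved inside the It\^o integral.
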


\begin{proof}
By the Bismut formula (Lemma \ref{lem}) with \(k(s) = \frac{T-s}{T}\), we have
\[
\langle \nabla P_T^h f, v \rangle = \frac{1}{T}\mathbb{E} \left[ \mathbb{H}_T f(x_T) \int_0^T \langle Q_s ( v), \ptr_s dB_s \rangle \right] - \frac{1}{2} \mathbb{E} \left[ \mathbb{H}_T f(x_T) \int_0^T \langle \nabla h, Q_s ( v) \rangle \frac{T-s}{T}\, ds \right].
\]
For the first term, by the Cauchy-Schwarz inequality and Itô isometry:
\[
\left|\frac{1}{T} \mathbb{E} \left[ \mathbb{H}_T f(x_T) \int_0^T \langle Q_s ( v), \ptr_s dB_s \rangle \right] \right| \leq (P_T f^2)^{1/2} \cdot \left( \int_0^T \frac{1}{T^2} ds \right)^{1/2} = \frac{1}{\sqrt{T}} (P_T f^2)^{1/2}.
\]
For the second term involving \( \nabla h \), we again apply the Cauchy–Schwarz inequality:  
\[
\begin{aligned}
&-\frac{1}{2}\mathbb{E} \left[ \mathbb{H}_T f(x_T) \int_0^T \langle \nabla h, Q_s ( v) \rangle \frac{T-s}{T}\, ds \right] \\
&\leq \frac{T}{4} (P_T f^2)^{1/2} \cdot  \sup_{s \in [0,T]} \bigl(\mathbb{E}^x |\nabla h|^2 (x_s)\bigr)^{1/2}\\
&\leq \frac{T}{4} \sqrt{\theta_1(x)} (P_T f^2)^{1/2}.
\end{aligned}
\]  
Combining both estimates yields the desired inequality.
\end{proof}

With the aid of the preceding lemmas, we are now in a position to introduce our main result in this subsection.

\begin{theorem}\label{s1.thm.2}
Assume Condition {\bf (C1)} holds for some $h\in C^1(M)$ satisfying
\begin{align*}
\sup_{s\in [0,1]}\mathbb{E}^x[ h^2(x_s)]\leq \theta_0(x),\quad  \mbox{and}\quad  \sup_{s\in [0,1]}\mathbb{E}^x[|\nabla h|^2 (x_s)] \leq \theta_1(x).
\end{align*}
Then for every fixed $T>0$, for any $f\in \mathcal{B}_b(M)$ and $x\in M$, we have
\begin{align*}
|\nabla P_Tf|(x)\leq \left[\frac{1}{\sqrt{\min\{T,1\}}}+\frac{1}{4}\sqrt{\theta_1(x)}+\sqrt{\theta_0(x)}  +\frac{1}{8}\sqrt{ \theta_0(x) \theta_1(x) } \right]\|f\|_{\infty}.
\end{align*}
\end{theorem}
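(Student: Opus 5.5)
We reduce to $T\le 1$ and then pass from the Neumann semigroup $P_T$ to the Feynman--Kac semigroup $P_T^h$ through a Duhamel identity. For $T>1$ the semigroup property gives $\nabla P_Tf=\nabla P_1(P_{T-1}f)$ with $\|P_{T-1}f\|_\infty\le\|f\|_\infty$, so it suffices to treat $T\le1$ and then replace $T$ by $\min\{T,1\}$. For $T\le 1$ the idea is to write $P_T$ in terms of $P^h$, since $P^h$ is the object controlled by Lemma~\ref{s1.lem.key}.

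By Theorem~\ref{FK-prop} with $V=\frac12 h$ (note $h\ge0$, so $V^-=0$ lies trivially in the Kato class), the semigroups satisfy the Duhamel identity
\begin{equation*}
P_Tf=P_T^hf+\frac12\int_0^T P_{T-s}^h\bigl(h\,P_sf\bigr)\,ds .
\end{equation*}
To justify it, apply It\^o's formula to $\mathbb H_t P_{T-t}f(x_t)$, or equivalently differentiate $s\mapsto P^h_{T-s}P_sf$. Taking gradients then gives
\begin{equation*}
|\nabla P_Tf|(x)\le|\nabla P_T^hf|(x)+\frac12\int_0^T|\nabla P_{T-s}^h(hP_sf)|(x)\,ds .
\end{equation*}

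The two terms are estimated separately. For the first, Lemma~\ref{s1.lem.key} with $T\le1$ bounds it by $\bigl(T^{-1/2}+\frac T4\sqrt{\theta_1}\bigr)\|f\|_\infty\le\bigl(T^{-1/2}+\frac14\sqrt{\theta_1}\bigr)\|f\|_\infty$. For the second, apply Lemma~\ref{s1.lem.key} at time $T-s$ to the function $g_s=hP_sf$. This function is unbounded, so one first truncates $h$ by $h\wedge n$, applies the Bismut formula, and removes the truncation by monotone convergence; this is where the moment hypotheses enter. The resulting bound for each $s$ is
\begin{equation*}
|\nabla P^h_{T-s}g_s|(x)\le\Bigl[\frac1{\sqrt{T-s}}+\frac{T-s}4\sqrt{\theta_1(x)}\Bigr]\bigl(P_{T-s}g_s^2\bigr)^{1/2}(x).
\end{equation*}
Since $P_{T-s}g_s^2\le\|f\|_\infty^2\,\mathbb E^x h^2(x_{T-s})\le\theta_0(x)\|f\|_\infty^2$, integrating over $s\in[0,T]$ gives
\begin{equation*}
\frac12\sqrt{\theta_0}\|f\|_\infty\Bigl[2\sqrt T+\frac{T^2}{8}\sqrt{\theta_1}\Bigr]\le\Bigl[\sqrt{\theta_0}+\frac1{16}\sqrt{\theta_0\theta_1}\Bigr]\|f\|_\infty,
\end{equation*}
which is within the stated constant $\frac18$. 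Summing with the first term gives the claimed estimate.

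The main obstacle is justifying the gradient of the Duhamel integral. The function $hP_sf$ is unbounded, while Lemma~\ref{lem} was stated for bounded $f$, and near $s=T$ the factor $(T-s)^{-1/2}$ appears. The singularity is integrable. For unboundedness, I would rerun the proof of Lemma~\ref{lem} with $f$ replaced by $g\in L^2$ along the process: the Cauchy--Schwarz steps only use $\mathbb E[\mathbb H_T^2 g(x_T)^2]\le P_Tg^2$ together with $|\mathbb H_tQ_t|\le1$, which holds because $\II\ge0$ and $\Ric_Z\ge -h$. Exchanging $\nabla$ with $\int_0^T ds$ would be handled by dominated convergence using these uniform bounds.
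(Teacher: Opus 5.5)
Your proposal follows the paper's proof essentially step by step: reduction to $T\le 1$ via $P_Tf=P_1(P_{T-1}f)$, the Duhamel identity $P_Tf=P_T^hf+\frac12\int_0^TP_{T-s}^h(hP_sf)\,ds$, and Lemma~\ref{s1.lem.key} applied to both terms together with $P_{T-s}(hP_sf)^2\le\theta_0(x)\|f\|_\infty^2$. Your coefficient $\frac1{16}$ in front of $\sqrt{\theta_0\theta_1}$ is slightly better than the paper's $\frac18$, which comes from the paper using $\frac12$ instead of the lemma's $\frac14$ in the second term; you are also more explicit than the paper about what is needed to apply the lemma to the unbounded function $hP_sf$ and to exchange $\nabla$ with the $ds$-integral.
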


\begin{proof}
Assume $T\in (0,1]$. Otherwise, we let $g=P_{T-1}f\in C_b^{\infty}(M)$ for $f\in \mathcal{B}_b(M)$ and $P_Tf=P_1g$. 
Moreover, $\|g\|_{\infty}=\|P_{T-1}f\|_{\infty}\leq \|f\|_{\infty}$. 

We begin by employing the following  variation of constants formula  to derive gradient estimates for $P_tf$:
\begin{equation}\label{Camel-id}
    P_Tf= P_T^hf+\frac{1}{2}\int_0^T P_{T-s}^h(h P_sf)\,ds, 
\end{equation}
where $h$ serves as the lower Bakry--Émery curvature bound in \eqref{c1}.  This identity is first obtained for $f\in C_N(L_{h/2})$ by Duhamel's formula for the generators $\frac12L$ and $\frac12(L-h)$, and then extended to $f\in\mathcal B_b(M)$ by bounded monotone-class approximation. Since \(h\ge0\), the Feynman--Kac weight \(\mathbb{H}_t\le1\), so no additional exponential integrability is needed for this Duhamel step.

From \eqref{Camel-id}, we obtain that for $v\in T_xM$ with $|v|=1$:
\[
\langle\nabla  P_T f, v\rangle (x)= \langle\nabla P_T^h f,  v\rangle(x) + \frac{1}{2}\int_0^T \langle\nabla P_{T-s}^h (h P_{s} f), v\rangle(x) \, ds.
\]
By Lemma \ref{s1.lem.key}, we see that
\begin{align*}
\langle\nabla  P_T f, v\rangle (x)&= \langle\nabla P_T^h f,  v\rangle(x) + \frac{1}{2}\int_0^T \langle\nabla P_{T-s}^h (h P_{s} f), v\rangle(x) \, ds\\
&\leq  \lf( \frac{1}{\sqrt{T}}+\frac{1}{4}\sqrt{\theta_1(x)}T\r)(P_Tf^2)^{1/2}\\
&\qquad +\frac{1}{2} \int_0^T\lf( \frac{1}{\sqrt{T-s}}+\frac{1}{2} \sqrt{\theta_1(x)}(T-s)\r)(P_{T-s} (hP_{s}f)^2)^{1/2}\, ds\\
&\leq  \lf( \frac{1}{\sqrt{T}}+\frac{1}{4}\sqrt{\theta_1(x)}T\r)\|f\|_{\infty}+\frac{\sqrt{\theta_0(x)}}{2} \int_0^T\lf( \frac{1}{\sqrt{T-s}}+\frac{1}{2}\sqrt{\theta_1(x)}(T-s)\r) ds\,  \|f\|_{\infty}\\
&= \lf[ \frac{1}{\sqrt{T}}+\frac{1}{4}\sqrt{\theta_1(x)}T+\sqrt{\theta_0(x)T}+\frac{T^2}{8}\sqrt{\theta_1(x)\theta_0(x)}\r]\|f\|_{\infty}.
\end{align*}

\end{proof}
\begin{remark}
In particular, when the lower bound $h$ is a constant, the main result reduces to the following statement.
Assume Condition \textbf{(C1)} holds with a nonnegative constant $h=K$.
For a fixed $T>0$ and $x\in M$, we have
\begin{align}\label{ineq-constant-Ricci}
|\nabla P_Tf|(x)\leq \Bigl(K +\frac{1}{\sqrt{\min\{T,1\}}} \Bigr)\,\|f\|_{\infty}.
\end{align}
Under the same hypothesis, the classical Bismut formula for $\nabla P_tf$, together with $|Q_s|\le \e^{Ks/2}$ when $\Ric_Z\ge -K$, yields
\begin{align*}
|\nabla P_Tf|(x)\leq \frac{\|f\|_{\infty}}{\sqrt{\int_0^T \e^{-Ks}\, ds}},
\end{align*}
after the usual energy-minimizing choice of $k$. 
Comparing this with inequality \eqref{ineq-constant-Ricci}, we observe that the bound derived from the Bismut formula is sharper.
This indicates that although the approach adopted in this paper can readily handle pointwise lower Ricci bounds, it entails a slight loss of sharpness in the estimate.
\end{remark}

\subsection{Explicit Examples under Radial Curvature Growth}

Let \(g\in C^1([1,\infty))\) be a positive non-decreasing function. Assume
that
\begin{equation}\label{condition-g}
        \sup_{r\geq0}
        \left\{
        \frac{1+\sqrt{\ell(r)}\,r}{g(r^2+1)}
        +
        \frac{r^2}{g(r^2+1)^2}
        \right\}
        <\infty .
\end{equation}
Let
\begin{equation}\label{form-f}
        f(r)
        =
        M_0\exp\left(a_0\int_1^r \frac{ds}{g(s)}\right),
        \qquad
        M_0>0,\quad a_0>0 .
\end{equation}
We assume that the function \(h\) appearing in Condition {\bf(C1)} is given by
\[
        h(x)=\sqrt{f(\rho_o(x)^2+1)}.
\]

For \(r>1\), we have
\[
        f'(r)=\frac{a_0 f(r)}{g(r)}
\]
and
\[
        f''(r)
        =
        \frac{a_0^2 f(r)}{g(r)^2}
        -
        \frac{a_0 f(r)g'(r)}{g(r)^2}
        \leq
        \frac{a_0^2 f(r)}{g(r)^2},
\]
where we used \(g'(r)\geq0\).

For brevity, we write \(\rho_o(\cdot)=\rho(o,\cdot)\). By Itô's formula for
the radial process and the Laplacian comparison estimate
\[
        L\rho_o^2
        \leq
        m+\sqrt{m^2+4m\ell(\rho_o)\rho_o^2},
\]
we obtain, outside the cut-locus and hence everywhere by the usual Calabi
argument,
\[
\begin{aligned}
d f(\rho_o(x_t)^2+1)
&\overset{\text{m}}{\leq}
\frac12
\left[
        f'(\rho_o(x_t)^2+1)L\rho_o(x_t)^2
        +
        f''(\rho_o(x_t)^2+1)|\nabla\rho_o(x_t)^2|^2
\right]dt
\\
&\leq
\frac12 f(\rho_o(x_t)^2+1)
\Biggl[
        \frac{a_0\left(m+\sqrt{m^2+4m\ell(\rho_o(x_t))\rho_o(x_t)^2}\right)}
             {g(\rho_o(x_t)^2+1)}
        +
        \frac{4a_0^2\rho_o(x_t)^2}
             {g(\rho_o(x_t)^2+1)^2}
\Biggr]dt .
\end{aligned}
\]
By \eqref{condition-g}, there exists a constant \(C_1>0\), depending only on
\(a_0,m,\ell\) and \(g\), such that
\[
        d f(\rho_o(x_t)^2+1)
        \overset{\text{m}}{\leq}
        C_1 f(\rho_o(x_t)^2+1)\,dt .
\]

Since the preceding differential inequality is understood modulo the
differential of a local martingale, we justify the passage to expectations by
localization. Let
\[
        \tau_n:=\inf\{s\geq0:\rho_o(x_s)\geq n\}.
\]
After possibly replacing \(\tau_n\) by a smaller localizing sequence, the
local martingale part stopped at \(\tau_n\) is a true martingale. Applying the
preceding inequality to \(t\wedge\tau_n\), we get
\[
\mathbb E^x f(\rho_o(x_{t\wedge\tau_n})^2+1)
\leq
f(\rho_o(x)^2+1)
+
C_1\int_0^t
\mathbb E^x f(\rho_o(x_{s\wedge\tau_n})^2+1)\,ds .
\]
By Gronwall's inequality,
\[
\mathbb E^x f(\rho_o(x_{t\wedge\tau_n})^2+1)
\leq
\e^{C_1t}f(\rho_o(x)^2+1).
\]
Letting \(n\to\infty\) and using Fatou's lemma yields
\begin{equation}\label{esti-eq-1}
        \mathbb E^x f(\rho_o(x_t)^2+1)
        \leq
        \e^{C_1t}f(\rho_o(x)^2+1).
\end{equation}

Moreover,
\[
\begin{aligned}
\left|\nabla\sqrt{f(\rho_o(\cdot)^2+1)}\right|^2(x)
&=
\frac{\left|f'(\rho_o(x)^2+1)\nabla\rho_o(\cdot)^2\right|^2(x)}
     {4f(\rho_o(x)^2+1)}
\\
&=
a_0^2\rho_o(x)^2
\frac{f(\rho_o(x)^2+1)}
     {g(\rho_o(x)^2+1)^2}
\\
&\leq
C_2 f(\rho_o(x)^2+1)
\end{aligned}
\]
for some constant \(C_2>0\). Consequently, by \eqref{esti-eq-1},
\begin{equation}\label{esti-eq-2}
        \mathbb E^x
       \l[ \left|\nabla\sqrt{f(\rho_o(\cdot)^2+1)}\right|^2(x_t)\r]
        \leq
        C_2\e^{C_1t}f(\rho_o(x)^2+1).
\end{equation}

Combining \eqref{esti-eq-1} and \eqref{esti-eq-2} with
Theorem~\ref{s1.thm.2}, we obtain the following estimate.

\begin{theorem}\label{h-theorem}
Assume Condition {\bf(C1)} holds with
\[
        h(x)=\sqrt{f(\rho_o(x)^2+1)},
\]
where \(f\) is defined by \eqref{form-f}. Then, for every \(T>0\),
\(x\in M\), and \(u\in\mathcal B_b(M)\),
\[
\begin{aligned}
|\nabla P_Tu|(x)
\leq
\Bigg[
&
\frac1{\sqrt{\min\{T,1\}}}
+
\left(\frac{\sqrt{C_2}}4+1\right)
\e^{\frac{1}{2}C_1} h(x)
+
\frac{\sqrt{C_2}}8
\e^{C_1}h(x)^2
\Bigg]\|u\|_\infty .
\end{aligned}
\]
\end{theorem}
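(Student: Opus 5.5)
The plan is to apply Theorem~\ref{s1.thm.2} with explicit choices of $\theta_0(x)$ and $\theta_1(x)$ read off from \eqref{esti-eq-1} and \eqref{esti-eq-2}, and then simplify.

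\emph{Choice of $\theta_0$.} Since $h^2(x_s)=f(\rho_o(x_s)^2+1)$, estimate \eqref{esti-eq-1} gives, for $s\in[0,1]$,
\[
\mathbb E^x[h^2(x_s)]\leq \e^{C_1 s}f(\rho_o(x)^2+1)\leq \e^{C_1}h(x)^2 .
\]
So we may take $\theta_0(x)=\e^{C_1}h(x)^2$, which gives $\sqrt{\theta_0(x)}=\e^{\frac12 C_1}h(x)$.

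\emph{Choice of $\theta_1$.} Estimate \eqref{esti-eq-2} similarly gives
\[
\sup_{s\in[0,1]}\mathbb E^x[|\nabla h|^2(x_s)]\leq C_2\e^{C_1}h(x)^2 .
\]
So we may take $\theta_1(x)=C_2\e^{C_1}h(x)^2$, which gives $\sqrt{\theta_1(x)}=\sqrt{C_2}\,\e^{\frac12C_1}h(x)$. Before using this, we must note that $h=\sqrt{f(\rho_o^2+1)}$ lies in $C^1(M)$ and is non-negative, as Condition {\bf(C1)} requires. Strictly, $\rho_o^2$ is not smooth at the cut locus. We take the hypothesis ``Condition (C1) holds with $h$'' to include this regularity, or we replace $\rho_o$ by a smooth comparable function, as is usual.

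\emph{Substitution.} With these choices the bracket in Theorem~\ref{s1.thm.2} becomes
\[
\frac1{\sqrt{\min\{T,1\}}}+\frac{\sqrt{C_2}}4\e^{\frac12C_1}h(x)+\e^{\frac12C_1}h(x)+\frac18\sqrt{C_2}\,\e^{C_1}h(x)^2 .
\]
This is exactly the asserted bound, with $u$ playing the role of $f$ in that theorem.

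\emph{Main obstacle.} The delicate step is the justification of \eqref{esti-eq-1} when the reflecting diffusion has a boundary. The Itô formula for $\rho_o(x_t)^2$ carries an additional term $\frac12 N\rho_o^2\,dl_t$, and we need this term to be non-positive to keep the inequality. I would first try to use the convexity $\II\geq0$: for convex boundary the inward normal derivative of the distance to an interior point is non-positive, as in Wang's book. If that is not available, I would argue via the setting's standing assumptions. The quantitative ingredients themselves are already in place in \eqref{esti-eq-1} and \eqref{esti-eq-2}, so the proof reduces to this substitution.
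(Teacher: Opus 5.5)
Your proposal is correct and is exactly the paper's argument: the paper obtains the theorem by substituting $\theta_0(x)=\e^{C_1}h(x)^2$ and $\theta_1(x)=C_2\e^{C_1}h(x)^2$, read off from \eqref{esti-eq-1} and \eqref{esti-eq-2} with $s\le1$, into Theorem~\ref{s1.thm.2}. You also raise two caveats, which the paper passes over silently when deriving \eqref{esti-eq-1}. The first is the local-time term $\frac12 N\rho_o^2\,dl_t$, which should be non-positive under $\II\geq0$. The second is the $C^1$ regularity of $h$ across the cut locus. Neither affects the substitution step itself.
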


In particular, suppose that the curvature lower bound is of the form
\[
        \ell(r)=K_0(1+r^2)^\gamma,
        \qquad K_0>0.
\]
If \(\gamma\in[0,1)\), we may take
\[
        g(r)=r^{\frac{\gamma+1}{2}}.
\]
Then
\[
        f(r)
        =
        M_0
        \exp\left(
        \frac{2a_0}{1-\gamma}
        \left(r^{\frac{1-\gamma}{2}}-1\right)
        \right).
\]
If \(\gamma=1\), we may take \(g(r)=r\), and then
\[
        f(r)=M_0r^{a_0}.
\]

Thus we obtain the following corollary.

\begin{corollary}\label{examples}
Assume Condition {\bf(C1)} holds with
\(
        \ell(\rho_o)=K_0(1+\rho_o^2)^\gamma,    \ 
        \gamma\in[0,1).
\)
Suppose
\[
        h(x)
        =
        C\exp\left(
        a(1+\rho_o(x)^2)^{\frac{1-\gamma}{2}}
        \right)
\]
for some constants \(K_0,\,C,\,a>0\). Then there exist positive constants
\(C_1=C_1(a,K_0,m,\gamma)\) and
\(C_2=C_2(a,K_0,\gamma)\) such that for every \(T>0\),
\(x\in M\), and \(u\in\mathcal B_b(M)\),
\[
\begin{aligned}
|\nabla P_Tu|(x)
\leq
\Biggl[
&
\frac1{\sqrt{\min\{T,1\}}}
+
C\left(\frac{\sqrt{C_2}}4+1\right)
\,
\exp\left(
        \frac{1}{2}C_1+a(1+\rho_o(x)^2)^{\frac{1-\gamma}{2}}
        \right)
\\
&\quad
+
\frac{C^2\sqrt{C_2}}8
\exp\left(
       C_1+ 2a(1+\rho_o(x)^2)^{\frac{1-\gamma}{2}}
        \right)
\Biggr]\, \|u\|_\infty .
\end{aligned}
\]

If Condition {\bf(C1)} holds with
\(
        \ell(\rho_o)=K_0(1+\rho_o^2)
\)
and
\[
        h(x)=C(1+\rho_o(x)^2)^{p/2}
\]
for some constants \(K_0,\, C,\, p>0\), then there exist positive constants
\(C_1=C_1(K_0,m,p)\) and \(C_2=C_2(K_0,p)\) such that  for every \(T>0\),
\(x\in M\), and \(u\in\mathcal B_b(M)\),
\[
\begin{aligned}
|\nabla P_Tu|(x)
\leq
\Biggl[
&
\frac1{\sqrt{\min\{T,1\}}}
+
\left(\frac{\sqrt{C_2}}4+1\right)
\e^{\frac{1}{2}C_1}\,
C(1+\rho_o(x)^2)^{p/2}
+
\frac{\sqrt{C_2}}8
\e^{C_1}
C^2(1+\rho_o(x)^2)^p
\Biggr]\, \|u\|_\infty .
\end{aligned}
\]
\end{corollary}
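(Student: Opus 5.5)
The plan is to obtain Corollary~\ref{examples} as a specialization of Theorem~\ref{h-theorem}. In each of the two regimes I choose $g$ and the parameters $M_0,a_0$ in \eqref{form-f} so that $\sqrt{f(\rho_o^2+1)}$ equals the prescribed $h$. I then check \eqref{condition-g} and read off the bound.

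\emph{Subquadratic case $\gamma\in[0,1)$.} I take $g(r)=r^{\frac{\gamma+1}{2}}$ on $[1,\infty)$, which is positive and non-decreasing. Then
\[
f(r)=M_0\exp\Bigl(\tfrac{2a_0}{1-\gamma}\bigl(r^{\frac{1-\gamma}{2}}-1\bigr)\Bigr).
\]
To get $\sqrt{f(\rho_o^2+1)}=C\exp\bigl(a(1+\rho_o^2)^{\frac{1-\gamma}{2}}\bigr)$, I set $a_0=a(1-\gamma)$ and $M_0=C^2\e^{2a}$.

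Next I verify \eqref{condition-g} with $\ell(r)=K_0(1+r^2)^\gamma$. The first term is
\[
\frac{1+\sqrt{K_0}(1+r^2)^{\gamma/2}r}{(1+r^2)^{\frac{\gamma+1}{2}}}\le 1+\sqrt{K_0},
\]
and the second term is
\[
\frac{r^2}{(1+r^2)^{\gamma+1}}\le 1.
\]
Hence the supremum is finite. The constant $C_1$ then comes out of the drift estimate preceding \eqref{esti-eq-1}. It depends on $a_0,m$ and the supremum in \eqref{condition-g}, hence on $(a,K_0,m,\gamma)$.

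For $C_2$, the gradient computation gives
\[
a_0^2\frac{\rho_o^2}{g(\rho_o^2+1)^2}\le a_0^2,
\]
so $C_2$ depends only on $a,\gamma$ (and trivially on $K_0$). Substituting $h$ and $h^2$ into Theorem~\ref{h-theorem} gives exactly the displayed bound:
\[
h=C\exp\bigl(a(1+\rho_o^2)^{\frac{1-\gamma}{2}}\bigr),\qquad h^2=C^2\exp\bigl(2a(1+\rho_o^2)^{\frac{1-\gamma}{2}}\bigr).
\]

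\emph{Quadratic case $\gamma=1$.} I take $g(r)=r$, so that $f(r)=M_0r^{a_0}$, and choose $a_0=p$, $M_0=C^2$. This gives $\sqrt{f(\rho_o^2+1)}=C(1+\rho_o^2)^{p/2}$. Condition \eqref{condition-g} becomes
\[
\frac{1+\sqrt{K_0}(1+r^2)^{1/2}r}{1+r^2}+\frac{r^2}{(1+r^2)^2}\le 1+\sqrt{K_0}+1,
\]
which is finite. Theorem~\ref{h-theorem} then yields the second bound with $h=C(1+\rho_o^2)^{p/2}$, and the constants depend on $(K_0,m,p)$ and $(K_0,p)$ respectively.

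The only delicate point is that Theorem~\ref{h-theorem} rests on Theorem~\ref{s1.thm.2}. That theorem needs the moments $\theta_0,\theta_1$ on $s\in[0,1]$, which \eqref{esti-eq-1}--\eqref{esti-eq-2} supply as $\e^{C_1}f$ and $C_2\e^{C_1}f$. It also needs Condition {\bf(C1)} itself, which is assumed. So the main obstacle is merely bookkeeping: confirming that the constants $C_1,C_2$ produced by the Itô/Gronwall argument are genuinely independent of $x$, and depend only on the stated parameters. I expect this to follow by tracking the bounds on $f'/f$ and $f''/f$ above.
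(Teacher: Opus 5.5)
Your proposal is correct and follows the paper's own argument: the corollary is obtained by specializing Theorem~\ref{h-theorem} with $g(r)=r^{\frac{\gamma+1}{2}}$ for $\gamma\in[0,1)$ and $g(r)=r$ for $\gamma=1$. You also make explicit two steps the paper leaves implicit, namely the parameter matching ($a_0=a(1-\gamma)$, $M_0=C^2\e^{2a}$, resp.\ $a_0=p$, $M_0=C^2$) and the verification of \eqref{condition-g}, and both check out.
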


\section{Gradient Estimates for Neumann Semigroups with Non-Convex Boundary}

\subsection{A Boundary Local-Time Estimate}

Before proving the main gradient estimate when the boundary is non-convex, we establish an auxiliary estimate involving the local time of the reflecting diffusion.  To this end, we introduce the following set,

\begin{definition}\label{def-DM}
We denote by \(\mathcal D(M)\) the class of all positive functions
\(\phi\in C^2(M^\circ)\cap C^1(M)\) such that
\[
        1\leq \inf_M\phi\leq \sup_M\phi<\infty,
\]
and such that \(L\log\phi\) is well defined in \(M^\circ\) and
\(N\log\phi\) is well defined on \(\partial M\). Moreover, we assume that
\[
        (L\log\varphi)^-, \, |\nabla\log\varphi|^2\in\mathcal K(M),\ \ \mbox{and}
\ \displaystyle
\varlimsup_{\rho_o(x)\to\infty}
\frac{(L\log\phi)^-+|\nabla\log\phi|^2}{1+\rho_o(x)^2}<\infty.
\]
\end{definition}
  This condition is automatically satisfied, for instance, if
\((L\log\phi)^-+|\nabla\log\phi|^2\) is bounded, or if it depends only on
\(\rho_{\partial}\) and has at most quadratic growth in
\(\rho_{\partial}\), where $\rho_{\partial}$ is the distance to the boundary $\partial M$. 

\begin{lemma}\label{lem-local-time}
Let \(\sigma\in C(\partial M)\) be non-negative. Suppose that there exists
\(\phi\in\mathcal D(M)\) such that
\[
        N\log\phi\geq \sigma
        \quad \hbox{on } \partial M .
\]
Let \(\psi\) be a non-negative measurable function satisfying
\[
        -L\log\phi+2|\nabla\log\phi|^2\leq \psi
        \quad \hbox{on } M^\circ .
\]
Then, for every \(t\geq0\) and \(x\in M\),
\[
\mathbb{E}^x \left[
\phi^{-2}(x_t)
\exp \left( \int_0^t \sigma(x_s)\,dl_s \right)
\exp \left( - \int_0^t \psi(x_s)\,ds \right)
\right]
\leq
\phi^{-2}(x).
\]
\end{lemma}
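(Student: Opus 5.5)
Write $u:=\log\phi$ and apply It\^o's formula to the process
\[
        Y_t:=\phi^{-2}(x_t)\exp\left(\int_0^t\sigma(x_s)\,dl_s-\int_0^t\psi(x_s)\,ds\right)
        =\exp\left(-2u(x_t)+\int_0^t\sigma\,dl_s-\int_0^t\psi\,ds\right).
\]
The aim is to show that $Y_t$ is a non-negative local supermartingale. Since $x_t$ is generated by $\frac12L$ with reflection term $\frac12N\,dl_t$, we have
\[
        du(x_t)=\langle\nabla u,/\!/_t\,dB_t\rangle+\tfrac12Lu(x_t)\,dt+\tfrac12Nu(x_t)\,dl_t .
\]
Hence $d\langle u(x_\cdot)\rangle_t=|\nabla u|^2\,dt$, and with $F(y)=\e^{-2y}$ (so $F'=-2F$, $F''=4F$) we get
\[
        dY_t\overset{m}{=}Y_t\Big[\big(-Lu+2|\nabla u|^2-\psi\big)(x_t)\,dt+\big(\sigma-Nu\big)(x_t)\,dl_t\Big].
\]
Here the $dl_t$ term comes from $-2\cdot\frac12 Nu\,dl_t+\sigma\,dl_t$. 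By the hypotheses $-L\log\phi+2|\nabla\log\phi|^2\le\psi$ in $M^\circ$ and $N\log\phi\ge\sigma$ on $\partial M$, both brackets are non-positive. The local martingale part is $-2Y_t\langle\nabla u,/\!/_t dB_t\rangle$, so $Y$ is a non-negative local supermartingale with $Y_0=\phi^{-2}(x)$.

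Then I would pass to expectations by localization. Take the exhaustion $D_n$ and exit times $\tau_n$ as in the proof of Lemma~\ref{lem}. On $[0,t\wedge\tau_n]$ the integrand $Y_s\nabla u(x_s)$ is bounded: $\phi\ge1$ gives $\phi^{-2}\le1$, $\psi\ge0$, $\nabla u$ is bounded on $\overline D_n$, and $\E\exp(\lambda l_{t\wedge\tau_n})<\infty$ by \cite[Lemma~3.1.2]{Wbook14}. So the stopped martingale part is a true martingale, and $\E^x[Y_{t\wedge\tau_n}]\le\phi^{-2}(x)$. Non-explosion gives $\tau_n\uparrow\infty$, and Fatou's lemma yields $\E^x[Y_t]\le\phi^{-2}(x)$. (Alternatively, one can directly invoke the standard fact that a non-negative local supermartingale is a supermartingale.)

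The main obstacle is regularity: $\phi$ is only $C^2(M^\circ)\cap C^1(M)$, so It\^o's formula up to the boundary needs justification. I would first handle this by approximation, using $\phi$ as a $C^2$ function in a collar where the Neumann derivative is still defined, or by an Itô--Tanaka/mollification argument. The Kato and growth assumptions in Definition~\ref{def-DM} serve to ensure that the $dt$ terms are locally integrable, so the approximation passes to the limit. Only the inequalities $N\log\phi\ge\sigma$ and $-L\log\phi+2|\nabla\log\phi|^2\le\psi$ enter the sign of the drift, so the limit preserves the supermartingale inequality.
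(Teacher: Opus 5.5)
Your proposal is correct and follows the paper's proof essentially step for step. The paper also applies It\^o's formula to $Y_t$, uses $N\log\phi\ge\sigma$ and $-L\log\phi+2|\nabla\log\phi|^2\le\psi$ to make both drift terms non-positive, concludes that $Y$ is a non-negative local supermartingale, and finishes with localization and Fatou's lemma. Your remarks on the true-martingale property of the stopped process and on regularity up to the boundary add a little care the paper omits, without changing the argument. On the stopped interval $Y$ is square-integrable rather than bounded, which is all you need.
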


\begin{proof}
Set \(g=\log\phi\) and define
\[
Y_t
=
\phi^{-2}(x_t)
\exp \left( \int_0^t \sigma(x_s)\,dl_s \right)
\exp \left( - \int_0^t \psi(x_s)\,ds \right).
\]
Equivalently,
\[
Y_t
=
\e^{-2g(x_t)}
\exp \left(
        \int_0^t \sigma(x_s)\,dl_s
        -
        \int_0^t \psi(x_s)\,ds
\right).
\]
By Itô's formula, modulo the differential of a local martingale,
\[
\begin{aligned}
dY_t
\overset{m}{=}
&\;
\e^{\int_0^t \sigma(x_s)\,dl_s-\int_0^t\psi(x_s)\,ds}
\left[
        \frac12 L\phi^{-2}-\psi\phi^{-2}
\right](x_t)\,dt
\\
&\quad
+
\e^{\int_0^t \sigma(x_s)\,dl_s-\int_0^t\psi(x_s)\,ds}
\left[
        \frac12 N\phi^{-2}+\sigma\phi^{-2}
\right](x_t)\,dl_t .
\end{aligned}
\]
Since \(\phi^{-2}=\e^{-2g}\), we have
\[
        \frac12 L\phi^{-2}
        =
        \phi^{-2}\left(-Lg+2|\nabla g|^2\right).
\]
Hence
\[
        \frac12 L\phi^{-2}-\psi\phi^{-2}
        =
        \phi^{-2}
        \left(
        -L\log\phi+2|\nabla\log\phi|^2-\psi
        \right)
        \leq0.
\]
Moreover,
\[
        \frac12 N\phi^{-2}
        =
        -\phi^{-2}N\log\phi.
\]
Therefore
\[
        \frac12 N\phi^{-2}+\sigma\phi^{-2}
        =
        \phi^{-2}\left(\sigma-N\log\phi\right)
        \leq0
        \quad \hbox{on } \partial M .
\]
Consequently \(Y_t\) is a non-negative local supermartingale.

Let \(\tau_n\) be a localizing sequence. Then
\[
        \mathbb E^x Y_{t\wedge\tau_n}
        \leq
        Y_0
        =
        \phi^{-2}(x).
\]
Letting \(n\to\infty\) and using Fatou's lemma, we obtain
\[
        \mathbb E^xY_t
        \leq
        \liminf_{n\to\infty}
        \mathbb E^xY_{t\wedge\tau_n}
        \leq
        \phi^{-2}(x).
\]
This proves the desired estimate.
\end{proof}

\subsection{Conformal Reduction to the Convex Boundary Case}
We begin by introducing a curvature-dimension condition which allows the lower bound to decay in a controlled manner with respect to distance.

\begin{mdframed}  
\textbf{Condition (C2)}:  
There exists a non‑negative function $\ell \in C^1(\R^+)$ such that $\ell(r)\leq c_0(1+r^2)$ for some nonnegative constant $c_0$, and  
\begin{align}
	\Ric_Z^{m,d}(p):=\Ric(p)-\nabla Z^\flat(p)-\frac{Z\otimes Z}{m-d}(p)\geq -\ell\bigl(\rho_o( p)\bigr). 
\end{align}
Moreover, there exist non‑negative functions $h\in C^1(M)$ and $\sigma\in C^1(\partial M)$ such that outside the cut locus  
 \begin{align*}
&\Ric_Z\geq -h, \qquad \II\geq -\sigma . 
\end{align*}
\end{mdframed}

The next result shows that Condition {\bf(C2)}, together with a suitable
boundary condition on the conformal factor \(\phi\), allows us to reduce the
non-convex boundary case to the convex one. We use the conformally changed
metric
\[
        g'=\phi^{-2}g
\]
and denote by \(\rho'\) the corresponding geodesic distance. The conformal
transformation formulas used below are standard; see, for example,
\cite[Section~3.2]{Wbook14}.

\begin{lemma}\label{lem3-1}
Assume Condition {\bf(C2)}. Suppose that there exists
\(\phi\in\mathcal D(M)\) such that
\[
        N\log\phi\geq \sigma
        \quad \hbox{on }\ \partial M .
\]
Let
\[
        \tau_n:=\inf\{s\geq0:\rho'(x,x_s)\geq n\}.
\]
Then there exists a sequence of absolutely continuous adapted processes
\(\{k_n\}_{n\geq1}\) such that
\[
        k_n(0)=1,
        \qquad
        k_n(s)=0 \quad \hbox{for } s\geq t\wedge\tau_n,
\]
and such that, for some constant \(c>0\),
\[
        \sup_{n\geq1}
        \mathbb E^x\left[\int_0^t |\dot k_n(s)|^2\,ds\right]
        \leq
        \frac1t+c.
\]
\end{lemma}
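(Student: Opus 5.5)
The plan is to build $k_n$ by the Thalmaier--Wang construction already used in \eqref{beta-n} (see \cite{TW98}), now applied to the reflecting diffusion on the domains $D_n:=\{p:\rho'(x,p)<n\}$, which are relatively compact because $\phi\in\mathcal D(M)$ is bounded and so $g'$ is complete. For each $n$ we need a cutoff $\beta_n\in C^2(D_n)$ with $\beta_n(x)=1$, $\beta_n|_{\partial D_n\cap M^\circ}=0$, $N\beta_n\le0$ on $\partial M$ (so that the local-time term in It\^o's formula for $\beta_n(x_s)$ has the right sign), and
\[
c(\beta_n):=\sup_{D_n}\{-\beta_nL\beta_n+3|\nabla\beta_n|^2\}<\infty .
\]
Given such $\beta_n$, we set $k_n(s)=\frac{1}{T_n(t)}\int_s^{t\wedge\tau_n}\beta_n^{-2}(x_r)\,dr$ with $T_n(t)=\int_0^{t\wedge\tau_n}\beta_n^{-2}(x_r)\,dr$, truncated at the hitting time of $\partial D_n$. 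The standard computation from \cite{TW98}, applying It\^o's formula to $\beta_n^{-2}(x_s)$ times an exponential weight, then gives
\[
\mathbb E^x\int_0^t|\dot k_n|^2\,ds\le \frac{c(\beta_n)}{1-\e^{-c(\beta_n)t}}\le \frac1t+c(\beta_n).
\]
The boundary term in this computation carries $-N\beta_n\,dl_s\ge 0$ in the favourable direction only if $N\beta_n\le 0$, and that is exactly what the conformal change is meant to provide.

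To produce such a $\beta_n$, we take it as a function of the $g'$-distance, $\beta_n=\cos\bigl(\frac{\pi\rho'(x,\cdot)}{2\delta}\bigr)$ on a $g'$-ball of fixed radius $\delta$ around $x$, and restrict $k_n$ to a fixed such ball (all $D_n\supset B'(x,\delta)$ for $n\ge\delta$). Working on a fixed ball is harmless, since only $k_n(s)=0$ for $s\ge t\wedge\tau_n$ is required. Because $N\log\phi\ge\sigma$, the boundary is convex in $g'$: the second fundamental forms are related by $\II'=\phi^{-1}(\II+N\log\phi\,g)\ge0$. Hence $g'$-minimal geodesics from $x$ do not need to touch $\partial M$ transversally, and $N\rho'(x,\cdot)\le 0$ on $\partial M$, which gives $N\beta_n\le0$. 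This is Wang's argument \cite[Section~3.2]{Wbook14}.

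For the interior term we rewrite $L$ in the new metric: $L=\phi^{-2}\bigl(\Delta'+Z'\bigr)$ with $Z'=\phi^2Z+(d-2)\phi^2\nabla\log\phi$, up to the standard formula. We then apply the Laplacian comparison \cite[Theorem~1]{Qian} for $\Delta'+Z'$, bounding its curvature-dimension lower bound via the conformal change formula for $\Ric'$. This bound involves $\Ric_Z^{m,d}\ge-\ell(\rho_o)$, $\Hess\log\phi$, $|\nabla\log\phi|^2$ and $L\log\phi$. On the fixed $g'$-ball, which is a relatively compact set, all of these are bounded, since $\phi\in C^2(M^\circ)\cap C^1(M)$, together with the growth condition in Definition~\ref{def-DM}. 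Calabi's trick handles the cut locus, and Greene--Wu smoothing gives a $C^2$ function, exactly as in Lemma~\ref{lem3.3}. The result is $c(\beta_n)\le c$, independent of $n$, and this yields the claim.

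The main obstacle is the boundary step: we must check rigorously that $g'$-convexity gives $N\rho'(x,\cdot)\le0$ (in the barrier sense) and that the smoothed cutoff keeps $N\beta\le0$. A secondary difficulty is controlling $\Hess\log\phi$ in the conformal Ricci formula, since Definition~\ref{def-DM} only controls $L\log\phi$ and $|\nabla\log\phi|$. We would handle it by localizing to a fixed compact ball, where the $C^2$ regularity of $\phi$ suffices, rather than seeking uniform global bounds.
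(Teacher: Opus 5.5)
Your proposal is correct in substance and uses the same mechanism as the paper. The conformal change $g'=\phi^{-2}g$ makes $\partial M$ convex via $\II'=\phi^{-1}(\II+(N\log\phi)g)\ge0$; one writes $L=\phi^{-2}L'$ with $L'=\Delta'+Z'$; and a Laplacian comparison for $L'$ feeds the Thalmaier--Wang cutoff.

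The execution differs:
\begin{itemize}
\item \textbf{Paper.} It applies Lemma~\ref{lem3.3} to the $L'$-diffusion on the growing balls $B'(x,n)$. The quadratic growth of $\ell$ and of $(L\log\phi)^-+|\nabla\log\phi|^2$ keeps $c(\beta_n)$ bounded as $n\to\infty$. The energy bound is then transferred to the $L$-diffusion by a time change, which the paper only asserts up to a multiplicative constant.
\item \textbf{Your route.} You use a single ball $B'(x,\delta)$; take $\delta\le1$, so that vanishing after its exit time gives $k(s)=0$ for $s\ge t\wedge\tau_n$ for every $n\ge1$. You compute the constant directly for $L$: since
\[
-\beta L\beta+3|\nabla\beta|^2=\phi^{-2}\bigl(-\beta L'\beta+3|\nabla'\beta|_{g'}^2\bigr)
\]
and $\phi\ge1$, the $L'$-bound passes to $L$ with no loss.
\end{itemize}
Your route gives exactly $\frac{c}{1-\e^{-ct}}\le\frac1t+c$, with no time change and without the growth conditions at infinity. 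The price is that your $c$ depends on $x$. The paper's growing balls, with the explicit bound of Lemma~\ref{lem3.3} at $\delta_x=n$, give for large $n$ a constant independent of $x$. That uniformity is what the paper invokes afterwards when it claims $|\nabla P^V_\cdot f|$ is bounded on $[\epsilon,T]\times M$. You also make explicit the boundary-sign requirement in the reflected It\^o formula, which the paper leaves implicit.

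Three points need fixing.

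First, your boundary signs are reversed. The Thalmaier--Wang computation applies It\^o's formula to $\beta^{-2}(x_s)$, whose local-time term is $\frac12N(\beta^{-2})\,dl_s=-\beta^{-3}N\beta\,dl_s$. So the favourable condition is $N\beta\ge0$, not $N\beta\le0$. Since $\beta=\cos\bigl(\frac{\pi\rho'(x,\cdot)}{2\delta}\bigr)$ decreases in $\rho'$, convexity ($N\rho'(x,\cdot)\le0$) gives exactly $N\beta\ge0$. Your conclusion stands, but both stated inequalities must be flipped.

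Second, your formula $k_n(s)=\frac1{T_n(t)}\int_s^{t\wedge\tau_n}\beta_n^{-2}(x_r)\,dr$ is not adapted. It uses the future of the path and a random, possibly infinite, normaliser, so it cannot enter the Bismut formula. The correct choice, with $D:=B'(x,\delta)$, is $T(s)=\int_0^s\beta^{-2}(x_r)\,dr$, $\tau(t)=\inf\{s:T(s)\ge t\}\le t\wedge\tau_D$, and
\[
k(s)=1-\frac{\int_0^{T(s)\wedge t}\e^{-cr}\,dr}{\int_0^t\e^{-cr}\,dr},\qquad c=c(\beta).
\]
The exponential weight is what produces $\frac{c}{1-\e^{-ct}}$; a linear choice only gives $\frac{\e^{ct}-1}{ct^2}$.

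Third, your fallback for $\Hess\log\phi$ fails where it matters. Localization only gives a Hessian bound on balls inside $M^\circ$, but $\phi$ is only $C^1$ up to $\partial M$. The boundary is relevant only when the ball meets $\partial M$; for $x\in M^\circ$ a small ball avoids it altogether. The obstacle is illusory, though. The $g'$-dual one-form of $Z'=\phi^2(Z+(d-2)\nabla\log\phi)$ is $\omega':=Z^\flat+(d-2)\,\ed\log\phi$, and the Hessian terms in $\Ric'$ and $\nabla'\omega'$ cancel:
\[
\begin{aligned}
\Ric'(X,X)-(\nabla'\omega')(X,X)
&=\Ric_Z(X,X)+(L\log\phi)\,|X|^2\\
&\quad-(d-2)\langle\nabla\log\phi,X\rangle^2-2\langle\nabla\log\phi,X\rangle\langle Z,X\rangle ,
\end{aligned}
\]
with $|X|$ the $g$-norm. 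This is why the paper and Definition~\ref{def-DM} only need control of $L\log\phi$ and $|\nabla\log\phi|^2$.
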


\begin{proof}
We first show that the conformal change removes the boundary
non-convexity. Under the conformal metric \(g'=\phi^{-2}g\), the second
fundamental form satisfies
\[
        \II'
        =
        \phi^{-1}\bigl(\II+(N\log\phi)g_{\partial M}\bigr).
\]
Since \(\II\geq-\sigma\) and \(N\log\phi\geq\sigma\), it follows that
\[
        \II'\geq0.
\]
Thus the boundary is convex with respect to \(g'\).

Next we express the generator in the conformally changed metric. Since
\[
        \Delta'
        =
        \phi^2\bigl(\Delta-(d-2)\langle\nabla\log\phi,\nabla\cdot\rangle\bigr),
\]
we have
\[
        L=\Delta+Z
        =
        \phi^{-2}(\Delta'+Z'),
\]
where
\[
        Z'=\phi^2\bigl(Z+(d-2)\nabla\log\phi\bigr).
\]
Equivalently,
\[
        L=\phi^{-2}L',
        \qquad
        L':=\Delta'+Z'.
\]

We now check the curvature condition for \(L'\). Let \(\dot\gamma\) be a
unit vector with respect to \(g'\). Then \(|\dot\gamma|=\phi\) with respect
to \(g\). By the standard conformal transformation formula for the
Bakry--Emery curvature, there exists a constant \(C=C(m,d)>0\) such that
\[
\begin{aligned}
&(\Ric^{Z'})'(\dot\gamma,\dot\gamma)
-
\frac{\langle Z',\dot\gamma\rangle'^2}{2(m-d)}
\geq
-\phi^2\ell(\rho_o)
+
\frac12 L\phi^2
-
C|\nabla\phi|^2 .
\end{aligned}
\]
Using
\[
        \frac12L\phi^2
        =
        \phi^2\left(L\log\phi+2|\nabla\log\phi|^2\right),
        \qquad
        |\nabla\phi|^2=\phi^2|\nabla\log\phi|^2,
\]
we obtain
\[
\begin{aligned}
&(\Ric^{Z'})'(\dot\gamma,\dot\gamma)
-
\frac{\langle Z',\dot\gamma\rangle'^2}{2(m-d)}
\\
&\qquad\geq
-\phi^2
\left[
        \ell(\rho_o)
        +(L\log\phi)^-
        +C|\nabla\log\phi|^2
\right].
\end{aligned}
\]
Since \(1\leq \inf_M\phi\leq\sup_M\phi<\infty\), the distances \(\rho_o\) and
\(\rho'_o\) are equivalent:
\[
        \frac{1}{\|\phi\|_\infty}\rho_o
        \leq
        \rho'_o
        \leq
        \rho_o .
\]
By the growth assumption in Condition {\bf(C2)} and the defining properties
of \(\mathcal D(M)\), the lower bound above has the same quadratic growth
control, in terms of \(\rho'_o\), as required in Condition {\bf(C1)}. Hence
the conformal operator \(L'\) satisfies the convex-boundary curvature
condition with dimension \(2m-d\).

Therefore Lemma~\ref{lem3.3} can be applied to the \(L'\)-diffusion stopped
upon leaving the \(g'\)-ball. Since \(L=\phi^{-2}L'\) and \(\phi\) is bounded
above and below by positive constants, the corresponding \(L\)-diffusion is a
bounded time change of the \(L'\)-diffusion. Consequently the stopping times
defined by \(g'\)-balls and the energy estimates for the cut-off processes
are preserved up to a multiplicative constant. Thus there exists a sequence
\(\{k_n\}_{n\geq1}\) satisfying
\[
        k_n(0)=1,
        \qquad
        k_n(s)=0 \quad \hbox{for } s\geq t\wedge\tau_n,
\]
and
\[
        \sup_{n\geq1}
        \mathbb E^x\left[\int_0^t |\dot k_n(s)|^2\,ds\right]
        \leq
        \frac1t+c
\]
for some constant \(c>0\). The proof is complete.
\end{proof}

\subsection{A Bismut Formula for Feynman--Kac Semigroups after Conformal Reduction}
Assume that there exists a non-negative function $\psi\in C^1(M)$ such that 
$- L \log \phi + 2|\nabla \log \phi|^2\leq \psi$.  Let 
$$V = \frac12 (h +\psi) $$
in what follows. 
We now derive a pointwise gradient estimate for \(P_t^V f\) under the integrability assumptions on \(V\) and its gradient.

\begin{lemma}\label{lem0-0}
Assume Condition {\bf (C2)} holds for some non-negative functions \( h \in C^1(M) \) and \( \phi \in \mathcal{D}(M) \)  such that  \( N \log \phi \geq \sigma \) on the boundary. If there exists a non-negative function $\psi\in C^1(M)$ and $\alpha_1\in C(M)$ such that 
$- L \log \phi + 2|\nabla \log \phi|^2\leq \psi$ and
\[
\sup_{s \in [0,T]} \mathbb{E}^x \left[ \left| \nabla (h+\psi)  \right|^2 (x_s) \right] \leq \alpha_1(x),
\]  
then  there exists \( Q_t : T_xM \to T_{x_t^x}M \) such that
\begin{align*}
\langle Q_t(v), N \rangle |_{\partial M} =0, \ \  v\in T_x M\ \ \mbox{and}\quad |Q_t|\leq \e^{\frac{1}{2}\int_0^t h(x_s)\,ds +\frac{1}{2}\int_0^t \sigma(x_s)\,dl_s },
\end{align*} 
and for $k\in C^1([0,T])$ such that $k(0)=1$ and $k(T)=0$, 
\[
\langle \nabla P_T^V f, v \rangle = 
-\mathbb{E} \l[ \mathbb{V}_T f(x_T) \int_0^T \langle Q_s (\dot{k}(s) v), \ptr_s dB_s \rangle \r] 
- \mathbb{E} \l[ \mathbb{V}_T f(x_T) \int_0^T \langle \nabla V, Q_s (k(s) v) \rangle \, ds \r].
\]  
\end{lemma}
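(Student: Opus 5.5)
The plan is to follow the proof of Lemma \ref{lem}, with the convex-boundary cutoff estimate replaced by Lemma \ref{lem3-1} and the weight $\mathbb H_T$ replaced by $\mathbb V_T$ with $V=\frac12(h+\psi)$.

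First, the local formula. Fix $x$ and a smooth exhaustion $(D_n)$, and take $\tau_n$ to be the exit time from the $g'$-ball of radius $n$ used in Lemma \ref{lem3-1}. On each such relatively compact domain, Condition \textbf{(C2)} gives $\Ric_Z\ge -h$ and $\II\ge-\sigma$, so Theorem \ref{BisF-Neumann} applies with $K=-h$ and with the boundary lower bound $-\sigma$. It yields a functional $Q$ whose normal component vanishes $dl_t$-a.e. and which satisfies
\[
|Q_{t\wedge\tau_n}|\le \exp\Bigl(\frac12\int_0^{t\wedge\tau_n}h(x_s)\,ds+\frac12\int_0^{t\wedge\tau_n}\sigma(x_s)\,dl_s\Bigr).
\]
Since the approximating equations \eqref{Qn-eq} do not depend on $n$ beyond the stopping, these functionals are consistent. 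We therefore obtain a global $Q$ with the stated bound for all $t$, using non-explosion, which follows from the curvature-dimension part of \textbf{(C2)}.

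The formula \eqref{Bismut-F1} holds with the potential $V$. Here $V\ge0$, so $V^-=0\in\mathcal K(M)$ and $V\in C^1$. It holds for every adapted $k_n$ with $k_n(0)=1$ and $k_n=0$ after $T\wedge\tau_n$, and the term $\ed V(k_tQ_t v)$ equals $\langle\nabla V,Q_t(k_tv)\rangle$.

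Second, the key domination. The products $\mathbb V_T Q_t$ are not bounded pointwise, because of the local-time factor. I will control them through Lemma \ref{lem-local-time}, which I regard as the main obstacle. For $t\le T$ we have
\[
\mathbb V_T|Q_t|\le \exp\Bigl(-\frac12\int_0^t\psi(x_s)\,ds+\frac12\int_0^t\sigma\,dl_s\Bigr),
\]
since the $h$-parts cancel and $h,\psi\ge0$ allow dropping the remainder on $[t,T]$. Squaring and applying Lemma \ref{lem-local-time} at time $t$ then gives
\[
\mathbb E[\mathbb V_T^2|Q_t|^2]\le \|\phi\|_\infty^2\,\mathbb E\bigl[\phi^{-2}(x_t)\,e^{\int_0^t\sigma\,dl-\int_0^t\psi\,ds}\bigr]\le \|\phi\|_\infty^2\phi^{-2}(x).
\]
With the squared functional we have $e^{\int\sigma\,dl}$ against $e^{-\int\psi\,ds}$, exactly matching the lemma. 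If the exponent does not match exactly, I would use the bound $\mathbb V_T|Q_t|\le (\mathbb V_T^2|Q_t|^2)^{1/2}$ and Cauchy--Schwarz only on the product with $f(x_T)$ and the integrand, keeping $\mathbb V_T$ unsquared.

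The stochastic term uses the same estimate. The pathwise factor $\mathbb V_T$ can be moved inside the Itô integral via $\mathbb V_T\le\mathbb V_s$ only after conditioning. So I would first replace $\mathbb E[\mathbb V_Tf(x_T)\int\cdots]$ by $\mathbb E[\mathbb V_sP^V_{T-s}f(x_s)\cdots]$ through the martingale representation already used in Theorem \ref{BisF-Neumann}. Then the Itô isometry gives bounds of the form $\|f\|_\infty\|\phi\|_\infty\phi(x)^{-1}\bigl(\mathbb E\int|\dot k_n|^2\bigr)^{1/2}$. The gradient term is bounded by $\|f\|_\infty\|\phi\|_\infty\phi(x)^{-1}T\sqrt{\alpha_1(x)}$ using the hypothesis on $|\nabla(h+\psi)|^2$.

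Third, passing to the global formula. By Lemma \ref{lem3-1}, $\sup_n\mathbb E\int_0^T|\dot k_n|^2\le 1/T+c$. Together with the previous step this shows that $|\nabla P^V_\cdot f|$ is bounded on $[\epsilon,T]\times M$, locally uniformly in $x$. Now fix a deterministic $k\in C^1([0,T-\epsilon])$ and repeat the localized martingale argument of Lemma \ref{lem} up to $(T-\epsilon)\wedge\tau_m$. The boundary Hessian term vanishes as the penalization index tends to infinity, by \eqref{normal-vanish}. The term $\mathbb E[k\,\mathbb V\,\ed P^V f(Q v)]$ at $\tau_m$ is dominated by the gradient bound times $\mathbb V|Q|$, which is square integrable by the second step, so it vanishes as $m\to\infty$ on $\{\tau_m<T-\epsilon\}$. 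The remaining terms converge by dominated convergence using the second-step bounds. Letting $\epsilon\downarrow0$ and approximating $k\in C^1([0,T])$ gives the asserted formula for $f\in\mathcal B_b(M)$, after the regularization $P^V_\epsilon f$ as in Theorem \ref{BisF-Neumann}.
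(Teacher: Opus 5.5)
Your proposal is essentially the paper's proof. It uses the localized formula of Theorem~\ref{BisF-Neumann} (with $K=-h$, boundary bound $-\sigma$, and $V=\frac12(h+\psi)\ge0$) with the cut-offs $k_n$ of Lemma~\ref{lem3-1}; the domination $\mathbb V_T|Q_t|\le\exp\bigl(-\frac12\int_0^t\psi(x_s)\,ds+\frac12\int_0^t\sigma(x_s)\,dl_s\bigr)$ together with Lemma~\ref{lem-local-time}, giving $\E[\mathbb V_T^2|Q_t|^2]^{1/2}\le\|\phi\|_\infty/\phi(x)$ (the paper cites Lemma~\ref{lem3-1} there, but Lemma~\ref{lem-local-time} is what is used, as you say); the resulting gradient bound on $[\epsilon,T]$; and removal of the localization as in Lemma~\ref{lem}.

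There are three points that the paper also passes over. First, the non-adapted weight is properly moved inside the It\^o integral by applying It\^o's formula to $(\mathbb V_tM_t)^2$, where $M_t=\int_0^t\langle Q_s(\dot k_sv),\ptr_s\,dB_s\rangle$. Its drift $-2V(x_t)(\mathbb V_tM_t)^2$ is non-positive because $V\ge0$, so $\E[\mathbb V_T^2M_T^2]\le\E\int_0^T\mathbb V_t^2|Q_t(\dot k_tv)|^2\,dt$. Your conditioning via the martingale representation only reproduces $\E\int_0^T\mathbb V_s\,\ed P^V_{T-s}f(Q_s(\dot k_sv))\,ds$, which is circular. Second, $\mathbb V_t|Q_t|$ is bounded only in $L^2$, not pathwise as in the convex case. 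So for random $k_n$ this estimate does not factor into $\frac{\|\phi\|_\infty^2}{\phi(x)^2}\E\int_0^T|\dot k_n|^2\,dt$; the a priori bound you use to discard the term at $\tau_m$ needs an extra H\"older/higher-moment argument. Third, even granting that bound, it is not uniform on $M$ when $\alpha_1$ is unbounded, while discarding the $\tau_m$ term needs control at the far-away points $x_{\tau_m}$.
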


\begin{proof}
We begin with the discussion as in the proof of Lemma \ref{lem}. For $v\in T_xM$ and 
$k_n\in L^{1,2}([0,T]; \mathbb{P})$ a sequence of non-negative processes as in Lemma  \ref{lem3-1} such that $k_n(0)=1$ and
$k_n(s)=0$ for $s\geq T\wedge \tau_{B'(x,n)}$,
\begin{align}\label{Bismut-F}
    \ed P_T^V f(v)&= -\E\l[ \mathbb{V}_{T} f(x_{T})\int_0^{T} \langle Q_t(\dot{k}_n(t)v), /\!/ _t \,dB_t\rangle\r]
    -\E\l[\mathbb{V}_T f(x_T)\int_0^{T} k_n(t) \, \ed V (Q_t(v))\,dt \r] \notag\\
&\leq  \|f\|_{\infty} \,
\mathbb{E} \Biggl[ \e^{-\int_0^T V(x_s) ds} \e^{\frac{1}{2}\int_0^T h(x_s) ds + \frac{1}{2}\int_0^T \sigma(x_s) d l_s} \notag \\
&\qquad\qquad \qquad \times \Bigl| \int_0^T 
\langle \e^{-\frac12\int_0^s h(x_s) ds - \frac12\int_0^s \sigma(x_s) d l_s} Q_s (\dot{k}(s) v), \ptr_s dB_s \rangle \Bigr| \Biggr] \notag \\
& \qquad + \|f\|_{\infty} \,
\mathbb{E} \Biggl[ \e^{-\int_0^T V(x_s) ds} \e^{\frac{1}{2}\int_0^T h(x_s) ds + \frac{1}{2}\int_0^T \sigma(x_s) d l_s} \notag \\
&\qquad\qquad \qquad \times  \int_0^T 
\e^{-\frac12\int_0^s h(x_s) ds - \frac12\int_0^s \sigma(x_s) d l_s} Q_s (\dot{k}(s) v) |\nabla V|(x_s)\, ds  \Biggr]. 
\end{align} 
We first see that  
\begin{align*}
\e^{-\int_0^T V(x_s) ds} \e^{\frac{1}{2}\int_0^T h(x_s) ds + \frac{1}{2}\int_0^T \sigma(x_s) d l_s} 
&\leq \e^{\frac{1}{2}\int_0^{T} \sigma (x_s) d l_s} \e^{-\frac{1}{2}\int_0^T\psi(x_s)\,d s},
\end{align*}
and by Lemma \ref{lem3-1}, we see that
\begin{align*}
\E[|\mathbb{V}_T Q_{t\wedge \tau_{B'(x,n)}}|^2]^{1/2}\leq \frac{\|\phi\|_{\infty}}{\phi(x)}. 
\end{align*}
Then, we have
\begin{align*}
    |\ed P_T^V f|&\leq \l|\E\l[ \mathbb{V}_T f(x_T)\int_0^{T} \langle Q_t(\dot{k}_n(t)v), /\!/ _t \,dB_t\rangle\r]\r|+
 \l|\E\l[\mathbb{V}_T f(x_T)\int_0^{T} k_n(t) \, \ed V (Q_t(v))\, dt \r]\r|\\
   &\leq \frac{\|\phi\|_{\infty}}{\phi(x)}  \E\l[\int_0^{T}|\dot{k}_n(t)|^2 \,d t\r]^{1/2}\|f\|_{\infty}+\frac{\|\phi\|_{\infty}}{\phi(x)}T \sup_{s\in [0,T]}\E\Big[ |\nabla V|^2(x_s)\Big] \, \|f\|_{\infty}.
\end{align*} 
By Lemma \ref{lem3-1}, we know that 
\begin{align*}
    |\ed P_T^V f|&\leq  \frac{\|\phi\|_{\infty}}{\phi(x)} \|f\|_{\infty} \l[\Bigl(\frac{1}{T}+c\Bigr)^{1/2}+ T \sup_{s\in [0,T]}\E |\nabla V|^2(x_s)\r],
    \end{align*}
    which implies that
    $|\nabla P_{\cdot}^Vf|$ is bounded in $[\epsilon, T]\times M$ for small $\epsilon>0$.
    This estimate justifies the
passage to the limit and the removal of the localization with the same reason as in the proof of Lemma \ref{lem}.
     
\end{proof}

From the Bismut formula above, we obtain the following gradient estimate for \(P^V_t f\).

\begin{lemma}\label{lem0}
Assume Condition {\bf (C2)} holds for some non-negative functions \( h \in C^1(M) \) and \( \phi \in \mathcal{D}(M) \)  such that  \( N \log \phi \geq \sigma \) on the boundary. If  there exists non-negative function $\psi\in C^1(M)$ such that   $- L \log \phi + 2|\nabla \log \phi|^2\leq \psi$ and
\[
\sup_{s \in [0,T]} \mathbb{E}^x \left[ \left| \nabla (h+\psi)  \right|^2 (x_s) \right] \leq \alpha_1(x),
\]  
then for any \(T>0\), 
\[
|\nabla P_T^V f|(x) \leq \left(\frac{1}{\sqrt{T}}  + \frac{T}{4} \sqrt{\alpha_1(x)} \right) 
\frac{\|\phi\|_{\infty}}{\phi(x)} \,  \bigl(P_T f^2\bigr)^{1/2}(x).
\]
\end{lemma}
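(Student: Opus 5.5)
Plan: mirror the proof of Lemma~\ref{s1.lem.key}, now starting from the Feynman--Kac Bismut formula of Lemma~\ref{lem0-0} with $V=\frac12(h+\psi)$. The one new ingredient is the boundary local-time estimate of Lemma~\ref{lem-local-time}, which replaces the trivial bound $|\mathbb H_TQ_t|\le1$ used in the convex case.

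Step 1. Fix $v\in T_xM$ with $|v|=1$ and take the deterministic choice $k(s)=\frac{T-s}{T}$ in Lemma~\ref{lem0-0}. This gives
\[
\langle\nabla P_T^Vf,v\rangle=\frac1T\mathbb E\Big[\mathbb V_Tf(x_T)\int_0^T\langle Q_s v,\ptr_s dB_s\rangle\Big]-\frac12\mathbb E\Big[\mathbb V_Tf(x_T)\int_0^T\frac{T-s}{T}\langle\nabla(h+\psi),Q_sv\rangle\,ds\Big].
\]

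Step 2. Control the weight. Since $\mathbb V_s=\e^{-\frac12\int_0^s(h+\psi)}$ and $|Q_s|\le \e^{\frac12\int_0^sh+\frac12\int_0^s\sigma\,dl}$, we have
\[
\mathbb V_s^2|Q_s|^2\le \exp\Big(\int_0^s\sigma\,dl_r-\int_0^s\psi\,dr\Big).
\]
Lemma~\ref{lem-local-time}, together with $\phi\ge1$ and $\phi^{-2}(x_s)\ge\|\phi\|_\infty^{-2}$, then yields
\[
\mathbb E\big[\mathbb V_s^2|Q_s|^2\big]\le \frac{\|\phi\|_\infty^2}{\phi(x)^2}\quad\text{for each } s\le T.
\]

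Step 3. Estimate the two terms. To place the weight inside the stochastic integral, write $\mathbb V_T=\mathbb V_s\cdot \e^{-\frac12\int_s^T(h+\psi)}$ and use $h+\psi\ge0$. The cleaner route is to first condition at time $s$ via the Markov property, replacing $\mathbb V_Tf(x_T)$ by $\mathbb V_sP^V_{T-s}f(x_s)$, with $|P^V_{T-s}f|\le (P_{T-s}f^2)^{1/2}$ by Cauchy--Schwarz since $V\ge0$. For the drift term this gives
\[
\frac12\int_0^T\frac{T-s}{T}\,\mathbb E\big[\mathbb V_s|Q_s||\nabla(h+\psi)|(x_s)(P_{T-s}f^2)^{1/2}(x_s)\big]\,ds .
\]
Cauchy--Schwarz in $\omega$ splits the weight $\mathbb V_s|Q_s|$ from the remaining factors. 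Bounding the weight by Step~2 and the gradient factor by $\sqrt{\alpha_1(x)}$, and using $\int_0^T\frac{T-s}{T}\,ds=\frac T2$, produces the term $\frac T4\sqrt{\alpha_1(x)}\frac{\|\phi\|_\infty}{\phi(x)}(P_Tf^2)^{1/2}$, after pairing $|\nabla(h+\psi)|$ with $P_{T-s}f^2$. For the martingale term, Cauchy--Schwarz and the Itô isometry give
\[
\frac1T(P_Tf^2)^{1/2}\Big(\int_0^T\mathbb E[\mathbb V_T^2|Q_s|^2]\,ds\Big)^{1/2}\le \frac{1}{\sqrt T}\frac{\|\phi\|_\infty}{\phi(x)}(P_Tf^2)^{1/2},
\]
using $\mathbb V_T\le\mathbb V_s$ and Step~2.

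Main obstacle. Step 3 requires keeping the Feynman--Kac weight attached to $Q_s$ inside the Itô integral, since $\mathbb V_T$ is not adapted. I would first try to justify this through the conditioning identity $\mathbb E[\mathbb V_Tf(x_T)\mid\mathcal F_s]=\mathbb V_sP^V_{T-s}f(x_s)$. Applying it to the martingale term requires a Clark--Ocone-type rewriting, so if that fails I would accept a cruder bound.

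Concretely, Hölder pairs $f(x_T)$ with $(P_Tf^2)^{1/2}$ and pairs $|\nabla(h+\psi)|$ with $\alpha_1$. This last pairing is where the constant $\frac14$ versus $\frac12$ must be tracked, possibly requiring the $\|f\|_\infty$ form as in Theorem~\ref{s1.thm.2}.
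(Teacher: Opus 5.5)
You follow the paper's route: Lemma~\ref{lem0-0} with $k(s)=\frac{T-s}{T}$, the bound $\mathbb V_s^2|Q_s|^2\le \exp(\int_0^s\sigma\,dl_r-\int_0^s\psi\,dr)$, and Lemma~\ref{lem-local-time}. However, neither of the two terms is actually closed.

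\textbf{Martingale term.} Your displayed bound, with $\mathbb V_T^2$ inside the $ds$-integral, is not what the Itô isometry gives. The conditioning/Clark--Ocone fix you suggest is circular, because the representation $\mathbb V_tP^V_{T-t}f(x_t)=P^V_Tf(x)+\int_0^t\mathbb V_s\langle\nabla P^V_{T-s}f(x_s),\ptr_s dB_s\rangle$ brings back $\nabla P^V_{T-s}f$. The missing step is short. Set $X_t=\int_0^t\langle Q_s(\dot k_sv),\ptr_s dB_s\rangle$. Then
\[
d(\mathbb V_t^2X_t^2)=2\mathbb V_t^2X_t\,dX_t+\mathbb V_t^2|Q_t(\dot k_tv)|^2\,dt-(h+\psi)(x_t)\,\mathbb V_t^2X_t^2\,dt .
\]
Since $h+\psi\ge0$, localization and Fatou give $\E[\mathbb V_T^2X_T^2]\le\E\int_0^T\mathbb V_t^2|Q_t(\dot k_tv)|^2\,dt$, with the adapted weight $\mathbb V_t$ inside. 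Cauchy--Schwarz and your Step~2 then give $\frac1{\sqrt T}\frac{\|\phi\|_\infty}{\phi(x)}(P_Tf^2)^{1/2}(x)$. This is what the paper's ``Cauchy--Schwarz and Itô isometry'' amounts to.

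\textbf{Drift term.} Here the gap is genuine. After splitting off $\mathbb V_s|Q_s|$ you are left with $\bigl(\E[AB]\bigr)^{1/2}$, where $A=|\nabla(h+\psi)|^2(x_s)$ and $B=P_{T-s}f^2(x_s)$, so that $\E B=P_Tf^2(x)$. ``Pairing'' this into $\sqrt{\alpha_1(x)}\,(P_Tf^2)^{1/2}(x)$ amounts to $\E[AB]\le\E[A]\,\E[B]$. That fails in general: take $f$ supported where $|\nabla(h+\psi)|$ is large.

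The real obstruction is that $f(x_T)$, $\mathbb V_s|Q_s|$ and $|\nabla(h+\psi)|(x_s)$ are each controlled only in $L^2$, and three $L^2$ factors cannot be separated by Cauchy--Schwarz. In Lemma~\ref{s1.lem.key} this was harmless because $\mathbb H_T|Q_s|\le1$ pointwise. For $\sigma\neq0$ the weight is unbounded, since the local time is. The constant $\frac14$ is not the issue, since $\nabla V=\frac12\nabla(h+\psi)$ and $\int_0^Tk(s)\,ds=\frac T2$. The paper's proof asserts exactly this three-factor Cauchy--Schwarz bound without further justification, so following it does not close the step.

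What your argument does give is the drift bound with $\|f\|_\infty$ in place of $(P_Tf^2)^{1/2}(x)$. That is weaker than the statement, and it is insufficient downstream: Theorem~\ref{s1.thm.3} applies the lemma to the unbounded function $VP_{T-s}f$. To get the stated form you must keep $\mathbb V_s|Q_s|$ and $|\nabla(h+\psi)|(x_s)$ together, i.e.\ bound the drift term by $\frac12\int_0^Tk(s)\,\bigl(\E[\e^{\int_0^s\sigma(x_r)\,dl_r-\int_0^s\psi(x_r)\,dr}|\nabla(h+\psi)|^2(x_s)]\bigr)^{1/2}ds\,(P_Tf^2)^{1/2}(x)$. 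This needs a weighted moment hypothesis (or fourth moments together with Hölder with exponents $(2,4,4)$), not the unweighted $\alpha_1$.
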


\begin{proof}
We start from the Bismut formula for $\nabla P_T^V f$ (see Lemma \ref{lem0-0})
with  \(k(s)=\frac{T-s}{T},\, s\in [0,T]\) and estimate the two terms separately.

For the stochastic integral term, Cauchy–Schwarz and Itô isometry give  
\[
\begin{aligned}
&-\mathbb{E} \Bigl[ \mathbb{V}_T f(x_T) \int_0^T \langle Q_s (\dot{k}(s) v), \ptr_s dB_s \rangle \Bigr] \\
&\leq   \bigl(P_T f^2\bigr)^{1/2}(x)\,
\mathbb{E} \Biggl[ \e^{-2\int_0^T V(x_s) \,ds} \Bigl(\int_0^T 
\langle  Q_s (\dot{k}(s) v), \ptr_s dB_s \rangle \Bigr)^2 \Biggr] ^{1/2}\\
&\leq  \bigl(P_T f^2\bigr)^{1/2}(x)\,
\mathbb{E} \Bigl[ \int_0^T \e^{ -\int_0^t \psi(x_s) \, ds} 
\e^{\int_0^t \sigma(x_s) \,d l_s}\, dt \Bigr]^{1/2} \frac{1}{T} .
\end{aligned}
\]  
Using  Lemma \ref{lem-local-time}, the expectation above is bounded by 
\[
-\mathbb{E} \Bigl[ \mathbb{V}_T f(x_T) \int_0^T \langle Q_s (\dot{k}(s) v), \ptr_s dB_s \rangle \Bigr]
\leq \frac{1}{\sqrt{T}} \cdot \frac{\|\phi\|_{\infty}}{\phi(x)} \cdot \bigl(P_T f^2\bigr)^{1/2}(x).
\]

For the drift term involving \(\nabla V\), Cauchy–Schwarz yields  

\[
\begin{aligned}
&-\mathbb{E} \Bigl[ \mathbb{V}_T f(x_T) \int_0^T \langle \nabla V, Q_s (k(s) v) \rangle ds \Bigr] \\
&\leq \frac{T}{2}\cdot 
\frac{\|\phi\|_{\infty}}{\phi(x)} \,
\sup_{s\in[0,T]} \bigl(\mathbb{E} |\nabla V|^2 (x_s)\bigr)^{1/2} \, \bigl(P_T f^2\bigr)^{1/2}(x).
\end{aligned}
\]  
By the assumed bound on \(\nabla V\), this becomes  
\[
-\mathbb{E} \Bigl[ \mathbb{V}_T f(x_T) \int_0^T \langle \nabla V, Q_s (k(s) v) \rangle ds \Bigr]  
\leq \frac{T}{4} \cdot \frac{\|\phi\|_{\infty}}{\phi(x)} \sqrt{ \alpha_1(x)} \cdot  \,\bigl(P_T f^2\bigr)^{1/2}(x).
\]
Adding the two estimates gives  
\[
|\nabla P_T^V f|(x) \leq 
\left( \frac{1}{\sqrt{T}} + \frac{T}{4}\sqrt{ \alpha_1(x)} \right) 
\frac{\|\phi\|_{\infty}}{\phi(x)} \bigl(P_T f^2\bigr)^{1/2}(x),
\]
which is the desired inequality.
\end{proof}

\subsection{Gradient estimate for the Neumann semigroup}

Finally, we combine the preceding lemmas with  the Duhamel formula, to obtain a pointwise gradient bound for the original Neumann semigroup \(P_t\).

\begin{theorem} \label{s1.thm.3}
Assume Condition {\bf (C2)} holds for some non-negative functions \( h \in C^1(M)  \) and \( \phi \in \mathcal{D}(M) \)  such that  \( N \log \phi \geq \sigma \) on the boundary.  If there exists some non-negative function  $\psi\in C^1(M)$ such that  $- L \log \phi + 2|\nabla \log \phi|^2\leq \psi$,  and there exist functions $\alpha_0, \alpha_1\in \mathcal{B}(M)$ such that
\[
\begin{aligned}
&\sup_{s\in [0,\, 1]} \mathbb{E}^x\Bigl[| h +\psi |^2(x_s) \Bigr] \leq \alpha_0(x), \quad \sup_{s\in [0,\, 1]} \mathbb{E}^x\Bigl[ \bigl| \nabla (h +\psi)\bigr|^2(x_s) \Bigr] \leq \alpha_1(x),
\end{aligned}
\]
then for any \( T > 0 \) and \( x \in M \),
\[
|\nabla P_T f|(x) \leq 
\frac{\|\phi\|_{\infty}}{\phi(x)} 
\l[\frac{1}{\sqrt{\min\{T,1\}}} + \frac14\sqrt{\alpha_1(x)} + \sqrt{\alpha_0(x)} 
+  \frac{1}{8}\sqrt{ \alpha_0(x) \alpha_1(x) } \r] \|f\|_{\infty}.
\]
\end{theorem}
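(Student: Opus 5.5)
The proof will mirror that of Theorem~\ref{s1.thm.2}, with the Feynman--Kac semigroup \(P^h_t\) replaced by \(P^V_t\), where \(V=\frac12(h+\psi)\), and with Lemma~\ref{lem0} used in place of Lemma~\ref{s1.lem.key}.

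\emph{Step 1: reduction to \(T\le 1\).} We first reduce to \(T\in(0,1]\). If \(T>1\), put \(g=P_{T-1}f\). Then \(\|g\|_\infty\le\|f\|_\infty\) and \(P_Tf=P_1g\). Applying the case \(T=1\) to \(g\) gives the bound with \(\min\{T,1\}=1\).

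\emph{Step 2: Duhamel identity.} For \(T\le1\) we use the variation of constants formula between the generators \(\frac12L\) and \(\frac12L-V\):
\[
P_Tf=P_T^Vf+\int_0^T P_{T-s}^V\bigl(VP_sf\bigr)\,ds
=P_T^Vf+\frac12\int_0^T P^V_{T-s}\bigl((h+\psi)P_sf\bigr)\,ds .
\]
As in \eqref{Camel-id}, this is first proved for \(f\in\mathcal C_N(L_V)\) by differentiating \(s\mapsto P^V_{T-s}P_sf\) with Theorem~\ref{FK-prop}. It then extends to \(f\in\mathcal B_b(M)\) by bounded monotone-class approximation. Since \(h,\psi\ge0\), we have \(\mathbb V_t\le1\), so no exponential integrability is needed for this step.

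\emph{Step 3: estimating the two terms.} Differentiate in the direction of a unit vector \(v\in T_xM\). Apply Lemma~\ref{lem0} to the first term with the function \(f\). Apply it inside the time integral, at time \(T-s\), to the functions \(\frac12(h+\psi)P_sf\). This gives
\[
|\nabla P_Tf|(x)\le\frac{\|\phi\|_\infty}{\phi(x)}\Bigl[\Bigl(\tfrac1{\sqrt T}+\tfrac T4\sqrt{\alpha_1}\Bigr)\|f\|_\infty+\frac12\int_0^T\Bigl(\tfrac1{\sqrt{T-s}}+\tfrac{T-s}4\sqrt{\alpha_1}\Bigr)\bigl(P_{T-s}((h+\psi)P_sf)^2\bigr)^{1/2}(x)\,ds\Bigr].
\]
Then bound
\[
P_{T-s}\bigl((h+\psi)P_sf\bigr)^2(x)\le\|f\|_\infty^2\,\mathbb E^x|h+\psi|^2(x_{T-s})\le\alpha_0(x)\|f\|_\infty^2 ,
\]
which is valid because \(T-s\in[0,1]\). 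Integrating in \(s\) and using \(T\le1\) gives the contribution \(\sqrt{\alpha_0 T}\) from the first summand and \(\frac{T^2}{16}\sqrt{\alpha_0\alpha_1}\) from the second. Both are at most the constants claimed, since \(T\le 1\) and \(\frac1{16}\le\frac18\).

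\emph{Main obstacle.} The hard step is justifying Lemma~\ref{lem0} for the functions \((h+\psi)P_sf\), which are unbounded. Lemma~\ref{lem0} only needs \((P_Tg^2)^{1/2}\) finite, and its Bismut formula, Lemma~\ref{lem0-0}, was stated for bounded \(g\). I would handle this by truncation. Apply the formula to \(g_n=((h+\psi)\wedge n)P_sf\). Pass to the limit using the pointwise bound of Lemma~\ref{lem0}, which depends on \(g_n\) only through \((P_Tg_n^2)^{1/2}\), together with monotone convergence for \(P_{T-s}(g_n^2)\). Then use the uniform gradient bound of Lemma~\ref{lem0} to interchange \(\nabla\) with the \(ds\)-integral by dominated convergence. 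Near \(s=T\), the factor \((T-s)^{-1/2}\) is integrable, so dominated convergence still applies there.
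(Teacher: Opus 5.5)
Your proposal is correct and follows essentially the same route as the paper: reduction to $T\le 1$, the Duhamel identity with $V=\frac12(h+\psi)$, Lemma~\ref{lem0} applied to $P_T^Vf$ and to each $P_{T-s}^V(VP_sf)$, and the moment bound $\alpha_0$ on $P_{T-s}\bigl(VP_sf\bigr)^2$, followed by the elementary time integral. Your truncation remark covers something the paper uses without comment, namely applying Lemma~\ref{lem0} to the unbounded functions $VP_sf$. Your coefficient $\frac{T^2}{16}$ on $\sqrt{\alpha_0\alpha_1}$ is also slightly sharper than the paper's $\frac{T^2}{8}$, which loses a harmless factor of $2$.
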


\begin{proof} 
As explained at the beginning of proof of Theorem \ref{s1.thm.2}, we assume $0<T\leq 1$.
We start from the Duhamel formula,
\begin{equation*}
P_T f = P_T^V f + \int_0^T P_{T-s}^V (V P_s f) \, ds,
\end{equation*}
where \( V = \frac12 \l(h +\psi\r) \).

Differentiating along a unit vector \( v \in T_x M \) gives  
\[
\langle \nabla P_T f, v \rangle (x) = 
\langle \nabla P_T^V f, v \rangle (x) 
+ \int_0^T \langle \nabla P_{T-s}^V (V P_s f), v \rangle (x) \, ds.
\]
Applying Lemma \ref{lem0} to the first term and to the second term with time parameter $r=T-s$, then changing variables $r\mapsto s$, yields
\begin{align}\label{Bis-ineq10}
\langle \nabla P_T f, v \rangle (x) 
&\leq \frac{\|\phi\|_{\infty}}{\phi(x)}
\Bigl( \frac{1}{\sqrt{T}} + \frac14\sqrt{\alpha_1(x)}\, T \Bigr) \bigl(P_T f^2\bigr)^{1/2}  \notag \\
&\quad + \frac{\|\phi\|_{\infty}}{\phi(x)} 
\int_0^T \Bigl( \frac{1}{\sqrt{s}} + \frac14 \sqrt{\alpha_1(x)}\, s \Bigr) 
\bigl(P_s (V P_{T-s} f)^2\bigr)^{1/2} \, ds.
\end{align}
Since the semigroup is contractive on \(L^\infty\),
\[
\bigl(P_T f^2\bigr)^{1/2} \leq \|f\|_\infty , \qquad 
\bigl(P_s (V P_{T-s} f)^2\bigr)^{1/2} \leq \bigl(P_s V^2\bigr)^{1/2} \|f\|_\infty .
\]
By the definition  \(V=\frac{1}{2}(h+\psi)\) and the hypothesis on \(h+\psi\),
\[
\sup_{s\in [0,T]}\bigl(P_s V^2\bigr)^{1/2}(x) \leq \tfrac12\sqrt{\alpha_0(x)} .
\]
Substituting these bounds into \eqref{Bis-ineq10} we obtain  
\[
\begin{aligned}
\langle \nabla P_T f, v \rangle (x) 
&\leq \frac{\|\phi\|_{\infty}}{\phi(x)} 
\Bigl( \frac{1}{\sqrt{T}} + \frac14\sqrt{\alpha_1(x)}\, T \Bigr) \|f\|_\infty \\
&\quad + \frac{\|\phi\|_{\infty}}{2\phi(x)} \sqrt{\alpha_0(x)} 
\l[\int_0^T \Bigl( \frac{1}{\sqrt{s}} + \frac14 \sqrt{\alpha_1(x)}\, s \Bigr) \, ds\r] \|f\|_\infty.
\end{aligned}
\]
Computing the integral,
\[
\int_0^T \Bigl( \frac{1}{\sqrt{s}} + \frac14 \sqrt{\alpha_1(x)}\, s \Bigr) \, ds 
= 2\sqrt{T} + \frac18 \sqrt{\alpha_1(x)}\, T^2 .
\]
Therefore,
\[
\begin{aligned}
\langle \nabla P_T f, v \rangle (x) 
&\leq \frac{\|\phi\|_{\infty}}{\phi(x)} 
\l[ \frac{1}{\sqrt{T}} + \frac14\sqrt{\alpha_1(x)}\, T + \sqrt{\alpha_0(x)T} 
+ \frac{T^2}{8} \sqrt{\alpha_0(x)\alpha_1(x)} \r] \|f\|_\infty.
\end{aligned}
\]
Taking the supremum over unit vectors \(v\) gives the stated estimate.
\end{proof}

\subsection{Explicit Estimates via Boundary Distance}

In this subsection, we impose an additional geometric condition near the
boundary, which allows us to construct the conformal factor \(\phi\) in terms
of the boundary distance function \(\rho_{\partial}\).

\begin{mdframed}
\textbf{Condition (C3)}: There exist non-negative constants \(\theta\) and
\(\sigma\) such that
\[
        -\sigma\leq \II\leq \theta
        \quad \hbox{on } \partial M .
\]
Moreover, there exists a positive constant \(r_0\) such that, on
\[
        \partial_{r_0}M:=\{x\in M:\rho_{\partial}(x)<r_0\},
\]
the boundary distance function \(\rho_{\partial}\) is smooth, \(|Z|\) is
bounded, and
\[
        \operatorname{Sect}\leq k
\]
for some positive constant \(k\).
\end{mdframed}

Let
\[
        J(t):=
        \cos(\sqrt{k}\,t)
        -
        \frac{\theta}{\sqrt{k}}\sin(\sqrt{k}\,t),
        \qquad t\geq0.
\]
Let \(J^{-1}(0)\) denote the first zero of \(J\), with the convention
\(J^{-1}(0)=\infty\) if \(J(t)>0\) for all \(t\geq0\). Then, for every
\(x\in M\) such that
\[
        \rho_{\partial}(x)\leq r_0\wedge J^{-1}(0),
\]
the Laplacian comparison estimate gives
\[
        \Delta\rho_{\partial}(x)
        \geq
        (d-1)\frac{J'}{J}(\rho_{\partial}(x)).
\]
When \(k>0\), the first zero is given by
\[
        J^{-1}(0)
        =
        \frac1{\sqrt{k}}
        \arcsin\left(\sqrt{\frac{k}{k+\theta^2}}\right).
\]

Using this assumption, F.-Y. Wang constructed a conformal factor \(\phi\)
satisfying
\[
        N\log\phi\geq \sigma
        \quad \hbox{on } \partial M;
\]
see \cite{Wang:2007} or \cite[Theorem~3.2.9]{Wbook14}. Following this
construction, set
\[
        r_1:=r_0\wedge J^{-1}(0)
\]
and define
\[
\begin{aligned}
\log \phi(x)
:=
\frac{\sigma}{\alpha}
\int_{0}^{\rho_{\partial}(x)\wedge r_1}
[J(s)-J(r_1)]^{1-d}
\left(
\int_{s}^{r_1}
[J(u)-J(r_1)]^{d-1}\,du
\right)ds,
\end{aligned}
\]
where
\[
        \alpha:=
        (1-J(r_1))^{1-d}
        \int_0^{r_1}
        [J(s)-J(r_1)]^{d-1}\,ds .
\]
Then, by the proof of \cite[Theorem~1.1]{wang2005functional},
\((L\log\phi)^-\) and \(|\nabla\log\phi|^2\) are bounded. In particular,
this \(\phi\) satisfies the assumptions required in
Theorem~\ref{s1.thm.3}. Moreover,
\[
        1\leq \phi\leq \e^{\frac12 d\sigma r_1}
\]
and
\[
        -L\log\phi+2|\nabla\log\phi|^2\leq C_0,
\]
where
\begin{align}\label{def-C0}
        C_0=
        \sigma\left(\delta_{r_1}(Z)+\frac{d}{r_1}\right)+2\sigma^2
\end{align}
and
\[
        \delta_{r_1}(Z)
        :=
        \sup\{|Z(x)|:\rho_{\partial}(x)\leq r_1\}.
\]

\begin{corollary}\label{cor-S3-1}
Assume Conditions {\bf(C2)} and {\bf(C3)}. Suppose that \(h\in C^1(M)\) is
non-negative and that there exist non-negative functions
\(\alpha_0,\alpha_1\) on \(M\) such that, for every \(x\in M\),
\begin{align}\label{gradient-ineq-2}
        \sup_{s\in[0,1]}
        \mathbb E^x[h^2(x_s)]
        \leq
        \alpha_0(x),
\ \text{ and }\ 
        \sup_{s\in[0,1]}
        \mathbb E^x[|\nabla h|^2(x_s)]
        \leq
        \alpha_1(x).
\end{align}
Then, for every \(T>0\), \(x\in M\), and \(f\in\mathcal B_b(M)\),
\[
\begin{aligned}
|\nabla P_Tf|(x)
\leq
\e^{\frac12 d\sigma r_1}
\biggl[
&
\frac{1}{\sqrt{\min\{T,1\}}}
+
\sqrt{\alpha_0(x)}+C_0
+\frac{1}{4}\sqrt{\alpha_1(x)}+
\frac18
\bigl(\sqrt{\alpha_0(x)}+C_0\bigr)\sqrt{\alpha_1(x)}
\biggr]
\|f\|_{\infty}.
\end{aligned}
\]
\end{corollary}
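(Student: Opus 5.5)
The plan is to apply Theorem~\ref{s1.thm.3} with Wang's conformal factor $\phi$ constructed above from $\rho_\partial$ and with the constant choice $\psi\equiv C_0$, where $C_0$ is given by \eqref{def-C0}. The preceding discussion already supplies everything needed for this choice. We have $\phi\in\mathcal D(M)$, because $1\le\phi\le \e^{\frac12 d\sigma r_1}$ and $(L\log\phi)^-$, $|\nabla\log\phi|^2$ are bounded, hence in $\mathcal K(M)$ with trivially controlled growth. We have $N\log\phi\ge\sigma$ on $\partial M$, by Wang's construction under {\bf(C3)}. Finally, $-L\log\phi+2|\nabla\log\phi|^2\le C_0$. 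Since $\psi$ is a non-negative constant, it lies in $C^1(M)$ and $\nabla(h+\psi)=\nabla h$. All hypotheses of Theorem~\ref{s1.thm.3} are therefore in place once the moment quantities are identified.

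Next I will translate the moment assumptions \eqref{gradient-ineq-2} into the form required by Theorem~\ref{s1.thm.3}. For the gradient term, the assumption can be used unchanged: $\sup_{s\in[0,1]}\E^x|\nabla(h+\psi)|^2(x_s)\le\alpha_1(x)$. For the zeroth-order term, Minkowski's inequality in $L^2(\mathbb P)$ gives
\[
\bigl(\E^x|h+C_0|^2(x_s)\bigr)^{1/2}\le \bigl(\E^x h^2(x_s)\bigr)^{1/2}+C_0\le \sqrt{\alpha_0(x)}+C_0 .
\]
So Theorem~\ref{s1.thm.3} applies with $\tilde\alpha_0(x):=(\sqrt{\alpha_0(x)}+C_0)^2$ in place of $\alpha_0$, and its conclusion reads
\[
\frac{\|\phi\|_\infty}{\phi(x)}\Bigl[\tfrac{1}{\sqrt{\min\{T,1\}}}+\tfrac14\sqrt{\alpha_1(x)}+\sqrt{\alpha_0(x)}+C_0+\tfrac18(\sqrt{\alpha_0(x)}+C_0)\sqrt{\alpha_1(x)}\Bigr]\|f\|_\infty .
\]
To finish, I will use $\phi(x)\ge1$ and $\|\phi\|_\infty\le \e^{\frac12 d\sigma r_1}$, which bound the prefactor by $\e^{\frac12 d\sigma r_1}$.

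The main obstacle is the regularity of $\phi$. Because of the truncation $\rho_\partial\wedge r_1$, $\phi$ is only $C^1$ across the level set $\{\rho_\partial=r_1\}$, and $L\log\phi$ exists there only in a distributional or one-sided sense. This matters in the It\^o argument of Lemma~\ref{lem-local-time}, which needs $\phi^{-2}$ to admit an It\^o formula with the supermartingale drift. I would handle this as Wang does. On the level set, $\log\phi$ is constant on the outside and its radial derivative vanishes continuously from the inside, since $\int_s^{r_1}$ vanishes at $s=r_1$. Consequently no singular local-time term at $\{\rho_\partial=r_1\}$ appears, or if one does appear it has a favourable sign, and the It\^o--Tanaka formula yields the same supermartingale inequality. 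If necessary, I will mollify $\phi$ near that level set, apply the argument to the smoothed function, and pass to the limit. This is the one point I would check carefully rather than cite, while noting that the paper already states that this $\phi$ "satisfies the assumptions required in Theorem~\ref{s1.thm.3}".
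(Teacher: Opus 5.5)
Your proposal is correct and is essentially the paper's proof: take Wang's $\phi$ and $\psi\equiv C_0$ in Theorem~\ref{s1.thm.3}, use Minkowski's inequality to replace $\sqrt{\alpha_0(x)}$ by $\sqrt{\alpha_0(x)}+C_0$, use $\nabla(h+\psi)=\nabla h$, and bound $\|\phi\|_\infty/\phi(x)\le \e^{\frac12 d\sigma r_1}$. The paper also redoes the reduction to $t_0=T\wedge 1$, which is redundant since Theorem~\ref{s1.thm.3} already holds for every $T>0$.

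Your remark on the kink of $\phi$ at $\{\rho_\partial=r_1\}$ goes beyond what the paper checks, and its conclusion is right. However, for $d\ge 2$, $(\log\phi)'$ does not vanish there merely because $\int_s^{r_1}[J(u)-J(r_1)]^{d-1}\,du\to0$, since the prefactor $[J(s)-J(r_1)]^{1-d}$ blows up; one needs $J'(r_1)<0$ to see that the product is of order $r_1-s$.
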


\begin{proof}
Set
\[
        t_0:=T\wedge1.
\]
By the semigroup property,
\[
        P_Tf=P_{t_0}P_{T-t_0}f.
\]
Applying Theorem~\ref{s1.thm.3} to \(P_{T-t_0}f\) with time parameter
\(t_0\), and using
\[
        \|P_{T-t_0}f\|_\infty\leq \|f\|_\infty,
\]
we obtain the desired estimate once the coefficients in
Theorem~\ref{s1.thm.3} are controlled.

By the construction of \(\phi\) under Condition {\bf(C3)},
\[
        \frac{\|\phi\|_\infty}{\phi(x)}
        \leq
        \e^{\frac12d\sigma r_1}.
\]
Moreover,
\[
        -L\log\phi+2|\nabla\log\phi|^2\leq C_0.
\]
Thus we may take \(\psi=C_0\) in Theorem~\ref{s1.thm.3}. Since \(h\geq0\),
\[
        |h+\psi|\leq h+C_0.
\]
Hence, for \(s\in[0,t_0]\subset[0,1]\),
\[
\begin{aligned}
\left(\mathbb E^x[(h(x_s)+C_0)^2]\right)^{1/2}
&\leq
\left(\mathbb E^x[h^2(x_s)]\right)^{1/2}+C_0 \\
&\leq
\sqrt{\alpha_0(x)}+C_0 .
\end{aligned}
\]
Also, since \(\psi=C_0\) is constant,
\[
        \nabla(h+\psi)=\nabla h,
\]
and therefore
\[
        \sup_{s\in[0,t_0]}
        \mathbb E^x[|\nabla(h+\psi)|^2(x_s)]
        \leq
        \alpha_1(x).
\]
Substituting these bounds into Theorem~\ref{s1.thm.3}, and using
\(t_0\leq1\), gives
\[
\begin{aligned}
|\nabla P_Tf|(x)
\leq
\e^{\frac12d\sigma r_1}
\biggl[
&
\frac{1}{\sqrt{t_0}}
+
\frac14\sqrt{\alpha_1(x)}\,t_0
+
(\sqrt{\alpha_0(x)}+C_0)\sqrt{t_0}
\\
&\quad
+
\frac{t_0^2}{8}
(\sqrt{\alpha_0(x)}+C_0)\sqrt{\alpha_1(x)}
\biggr]
\|f\|_{\infty}.
\end{aligned}
\]
Since \(0<t_0\leq1\), this implies
\[
\begin{aligned}
|\nabla P_Tf|(x)
\leq
\e^{\frac12d\sigma r_1}
\biggl[
&
\frac{1}{\sqrt{\min\{T,1\}}}
+
\sqrt{\alpha_0(x)}+C_0
+\frac{1}{4}\sqrt{\alpha_1(x)}+
\frac18
(\sqrt{\alpha_0(x)}+C_0)\sqrt{\alpha_1(x)}
\biggr]
\|f\|_{\infty}.
\end{aligned}
\]
The proof is complete.
\end{proof}

\begin{remark}
If the lower bound for the Bakry--Emery curvature has the same form as in
Corollary~\ref{examples}, then the coefficients \(\alpha_0\) and
\(\alpha_1\) in \eqref{gradient-ineq-2} can be made explicit in the same way
as in Corollary~\ref{examples}.
\end{remark}

\section{Application to Neumann eigenfunction estimates}
We now turn to a direct application of the preceding semigroup gradient
estimates to pointwise bounds for Neumann eigenfunctions.  For compact manifolds, related estimates were studied in \cite{ArnaudonThalmaierWang2020, ChengThalmaierWang2024Hessian}. 
 Let \(u\in C^2(M)\) be a classical
Neumann eigenfunction of \(L\), namely
\begin{align}\label{Harmonic-function}
        Lu=-\lambda u \quad \hbox{in } M^\circ,
        \qquad
        Nu|_{\partial M}=0,
\end{align}
where \(\lambda>0\). Equivalently, \(u\) belongs to the domain of the
Neumann realization of \(L\) and satisfies the above equation.
Since the Neumann semigroup \(P_t\) is generated by \(\frac12L\), the spectral relation gives
\[
        P_tu=\e^{-\lambda t/2}u,\qquad t\geq0.
\]
Consequently,
\[
        |\nabla u|(x)=\e^{\lambda T/2}|\nabla P_Tu|(x),
        \qquad T>0.
\]
Thus the gradient estimates for \(P_T\) immediately yield gradient estimates
for Neumann eigenfunctions.  

We first treat the convex boundary case, where the boundary contribution is
non-positive and no conformal change is needed.  

\begin{corollary}\label{cor:neumann-eigen-convex}
Assume Condition {\bf(C1)}. Suppose that there exist non-negative
functions \({\theta}_0,\, {\theta}_1\) on \(M\) such that, for every \(x\in M\),
\[
        \sup_{s\in [0,1] }\mathbb E^x[h^2(x_s)]\leq  {\theta}_0(x),
        \qquad
        \sup_{s\in [0, 1] }\mathbb E^x[|\nabla h|^2(x_s)]
        \leq  {\theta}_1(x).
\]
Let \(u\in C^2(M)\) satisfy \eqref{Harmonic-function}
with \(\lambda>0\). Then, for every \(x\in M\),
\[
        |\nabla u|(x) \leq
        \e^{1/2}
        \left[
        \sqrt{\max\{\lambda,  1\}}
        +
        \frac{\sqrt{{\theta}_1(x)}}{4}
        +
        \sqrt{{\theta}_0(x)}
        +
        \frac{\sqrt{{\theta}_0(x) {\theta}_1(x)}}{8}
        \right]
        \|u\|_{\infty}.
\]
\end{corollary}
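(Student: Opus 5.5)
The plan is to combine the spectral identity $P_tu=\e^{-\lambda t/2}u$ with Theorem~\ref{s1.thm.2}, choosing the time parameter $T=\min\{\lambda^{-1},1\}$. Since $u\in C^2(M)$ is a Neumann eigenfunction, we first need $P_tu=\e^{-\lambda t/2}u$ rigorously. Here we use It\^o's formula for $\e^{\lambda t/2}u(x_t)$: the boundary term vanishes because $Nu=0$, and the drift term vanishes because $\frac12Lu+\frac{\lambda}{2}u=0$. So $\e^{\lambda t/2}u(x_t)$ is a local martingale. For bounded $u$ (we may assume $\|u\|_\infty<\infty$, otherwise there is nothing to prove), localizing with exit times from an exhaustion and using non-explosion together with bounded convergence gives $\mathbb E^x[u(x_t)]=\e^{-\lambda t/2}u(x)$. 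In particular $u\in\mathcal B_b(M)$, so Theorem~\ref{s1.thm.2} applies with $f=u$.

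Next we fix $T>0$ and write $|\nabla u|(x)=\e^{\lambda T/2}|\nabla P_Tu|(x)$. By Theorem~\ref{s1.thm.2},
\[
|\nabla u|(x)\leq \e^{\lambda T/2}\left[\frac{1}{\sqrt{\min\{T,1\}}}+\frac14\sqrt{\theta_1(x)}+\sqrt{\theta_0(x)}+\frac18\sqrt{\theta_0(x)\theta_1(x)}\right]\|u\|_\infty .
\]
We split into two cases.

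\begin{enumerate}
\item[(i)] If $\lambda\geq1$, we take $T=\lambda^{-1}\le1$. Then $\e^{\lambda T/2}=\e^{1/2}$ and $1/\sqrt{\min\{T,1\}}=\sqrt\lambda=\sqrt{\max\{\lambda,1\}}$.
\item[(ii)] If $\lambda<1$, we take $T=1$. Then $\e^{\lambda/2}\le \e^{1/2}$ and $1/\sqrt{\min\{T,1\}}=1=\sqrt{\max\{\lambda,1\}}$.
\end{enumerate}

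In both cases the bracket matches the claimed one. The moment hypotheses are only needed on $[0,1]\supset[0,T]$, which is exactly what Theorem~\ref{s1.thm.2} requires.

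The computation itself is routine. The only step needing care is justifying $P_tu=\e^{-\lambda t/2}u$ on a possibly noncompact $M$, i.e.\ showing that the local martingale is a true martingale. If $u$ is bounded, the bounded convergence argument above suffices. If one prefers, one can instead cite the domain statement in the setup: $u$ lies in the domain of the Neumann generator, so $\frac{d}{dt}P_tu=\frac12 P_tLu=-\frac\lambda2P_tu$, as in Theorem~\ref{FK-prop} with $V=0$.
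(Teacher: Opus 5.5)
Your proof is correct and is essentially the paper's argument: combine $|\nabla u|(x)=\e^{\lambda T/2}|\nabla P_Tu|(x)$ with Theorem~\ref{s1.thm.2} and choose $T$ suitably. The differences are minor. The paper takes $T=\lambda^{-1}$ for every $\lambda>0$, which already gives $1/\sqrt{\min\{T,1\}}=\sqrt{\max\{\lambda,1\}}$ and $\e^{\lambda T/2}=\e^{1/2}$, so your case split is harmless but unnecessary. The paper also simply asserts $P_tu=\e^{-\lambda t/2}u$, whereas your It\^o-formula and localization argument (using boundedness of $u$ and non-explosion) actually justifies it.
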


\begin{proof}
For every \(T>0\), the spectral relation gives
\[
        |\nabla u|(x)=\e^{\lambda T/2}|\nabla P_Tu|(x).
\]
Applying Theorem~\ref{s1.thm.2} to \(f=u\), and using the
assumptions
\[
        \sup_{s\in [0,1] }\mathbb E^x[h^2(x_s)]
        \leq  {\theta}_0(x),
\]
and
\[
        \sup_{s\in[0,1] }\mathbb E^x[|\nabla h|^2(x_s)]
        \leq  {\theta}_1(x),
\]
we obtain
\[
        |\nabla u|(x)
        \leq
        \e^{\lambda T/2}
        \left[
        \frac1{\sqrt {\min\{T,\, 1\}}}
        +
        \frac14\sqrt{ {\theta}_1(x)}
        +
        \sqrt{ {\theta}_0(x)}
        +
        \frac{1}{8}
        \sqrt{ {\theta}_0(x) {\theta}_1(x)}
        \right]
        \|u\|_{\infty}.
\]
Taking \(T=\lambda^{-1}\) gives the desired estimate.
\end{proof}

We then consider the general
non-convex case by using a conformal factor \(\phi\) which compensates for the
negative part of the second fundamental form.

\begin{corollary}
\label{cor:neumann-eigen-nonconvex}
Assume Condition {\bf(C2)}. Suppose that there exists
\(\phi\in \mathcal{D}(M)\) such that
\[
        N\log\phi\geq \sigma
        \quad \hbox{on } \partial M,
\]
and that for some \(\psi\in C^1(M)\cap \mathcal K(M)\),
\[
        -L\log\phi+2|\nabla\log\phi|^2\leq \psi.
\]
Assume further that there exist non-negative functions \( \alpha_0,\, \alpha_1\)
on \(M\) such that, for every \(x\in M\),
\[
        \sup_{s\in [0,1]}\mathbb E^x[|h+\psi|^2(x_s)]
        \leq  {\alpha}_0(x),
        \qquad
        \sup_{s\in [0,1] }
        \mathbb E^x[|\nabla(h+\psi)|^2(x_s)]
        \leq  {\alpha}_1(x).
\]
Let \(u\in C^2(M)\) satisfy \eqref{Harmonic-function} with \(\lambda>0\). Then for every \(x\in M\),
\[
        |\nabla u|(x)
        \leq
        \e^{1/2}
        \frac{\|\phi\|_{\infty}}{\phi(x)}
        \left[
        \sqrt{\max\{\lambda,  1\} }
        +
        \frac{\sqrt{ {\alpha}_1(x)}}{4}
        +
       \sqrt{ {\alpha}_0(x)}
        +
        \frac{\sqrt{ {\alpha}_0(x) {\alpha}_1(x)}}{8}
        \right]
        \|u\|_{\infty}.
\]
\end{corollary}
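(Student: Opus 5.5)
The argument mirrors the proof of Corollary~\ref{cor:neumann-eigen-convex}, with Theorem~\ref{s1.thm.3} playing the role of Theorem~\ref{s1.thm.2}. First, since $u\in C^2(M)$ solves \eqref{Harmonic-function}, we have $P_tu=\e^{-\lambda t/2}u$. Hence, for every $T>0$,
\[
|\nabla u|(x)=\e^{\lambda T/2}|\nabla P_Tu|(x).
\]
This requires $u$ to be bounded, which we may assume: otherwise $\|u\|_\infty=\infty$ and there is nothing to prove. We then check that the hypotheses of Theorem~\ref{s1.thm.3} hold with $f=u$. Condition \textbf{(C2)} is assumed, $\phi\in\mathcal D(M)$ satisfies $N\log\phi\ge\sigma$, and $\psi$ dominates $-L\log\phi+2|\nabla\log\phi|^2$. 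The moment bounds on $h+\psi$ and on $\nabla(h+\psi)$ over $s\in[0,1]$ are exactly $\alpha_0,\alpha_1$. One point needs care: Theorem~\ref{s1.thm.3} asks $\psi$ to be non-negative, whereas here $\psi\in C^1(M)\cap\mathcal K(M)$. We handle this by replacing $\psi$ with $\psi^+$ where needed, or simply by noting that the Feynman--Kac construction with $V=\frac12(h+\psi)$ only uses $V^-\in\mathcal K(M)$, which holds since $h\ge0$ and $\psi\in\mathcal K(M)$.

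Applying Theorem~\ref{s1.thm.3} to $f=u$ then gives
\[
|\nabla u|(x)\le \e^{\lambda T/2}\frac{\|\phi\|_\infty}{\phi(x)}\Bigl[\frac1{\sqrt{\min\{T,1\}}}+\frac14\sqrt{\alpha_1(x)}+\sqrt{\alpha_0(x)}+\frac18\sqrt{\alpha_0(x)\alpha_1(x)}\Bigr]\|u\|_\infty .
\]
We choose $T=\lambda^{-1}$, so that $\e^{\lambda T/2}=\e^{1/2}$ and $\min\{T,1\}=\min\{\lambda^{-1},1\}$. Therefore
\[
\frac1{\sqrt{\min\{T,1\}}}=\sqrt{\max\{\lambda,1\}},
\]
which gives exactly the stated bound.

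The main point needing attention is not computational. It is the legitimacy of applying the semigroup estimate to $u$, that is, the identity $P_tu=\e^{-\lambda t/2}u$. We would justify it with the same Itô argument used in Theorem~\ref{FK-prop}. By Itô's formula, $\e^{\lambda t/2}u(x_t)$ is a local martingale: the $dt$ term vanishes because $Lu=-\lambda u$, and the $dl_t$ term vanishes because $Nu=0$. Since $u$ is bounded, it is bounded on $[0,T]$, hence a true martingale, and the identity follows. The sign issue with $\psi$ noted above is the only other subtlety.
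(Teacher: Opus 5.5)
Your core argument is the paper's own proof: the spectral identity $|\nabla u|(x)=\e^{\lambda T/2}|\nabla P_Tu|(x)$, Theorem~\ref{s1.thm.3} applied to $f=u$, and the choice $T=\lambda^{-1}$. Your It\^o justification of $P_tu=\e^{-\lambda t/2}u$ and your remark that $u$ may be taken bounded are fine additions.

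The place where you go wrong is the patch for the sign of $\psi$.

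\emph{Replacing $\psi$ by $\psi^+$ does not work.} First, $\psi^+$ is in general not $C^1$, so $\nabla V$ in Lemma~\ref{lem0-0} is no longer available. Second, $\alpha_0,\alpha_1$ bound moments of $h+\psi$ and $\nabla(h+\psi)$, not of $h+\psi^+$ and $\nabla(h+\psi^+)$. For instance, on an open set where $\psi=-h<0$ you have $h+\psi=0$ and $\nabla(h+\psi)=0$. There, however, $h+\psi^+=h$ and $\nabla(h+\psi^+)=\nabla h$.

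\emph{Your alternative, that only $V^-\in\mathcal K(M)$ is used, is also insufficient.} It is enough for the Bismut formula of Theorem~\ref{BisF-Neumann}. It is not enough for the constants in Lemma~\ref{lem0}, on which Theorem~\ref{s1.thm.3} rests. There the bound
\[
\mathbb E\Bigl[\mathbb V_T^2\Bigl(\int_0^T\langle Q_s(\dot k_s v),\ptr_s\, dB_s\rangle\Bigr)^2\Bigr]\le \mathbb E\int_0^T\mathbb V_t^2|Q_t|^2\dot k_t^2\,dt
\]
uses that $t\mapsto\mathbb V_t$ is non-increasing, i.e.\ $V=\frac12(h+\psi)\ge0$. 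The estimate $\mathbb V_T|Q_s|\le\mathbb V_s|Q_s|$ in the drift term uses the same monotonicity. If $(h+\psi)^-\neq0$, you pick up an extra factor such as $\exp\bigl(\int_0^T(h+\psi)^-(x_s)\,ds\bigr)$, and the stated constants are lost.

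The paper's proof simply invokes Theorem~\ref{s1.thm.3}, which explicitly requires $\psi$ to be non-negative. So the corollary should be read with $\psi\ge0$, a hypothesis omitted in its statement. You should do the same and state that assumption, rather than claim a reduction that does not hold.
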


\begin{proof}
As above,
\[
        |\nabla u|(x)=\e^{\lambda T/2}|\nabla P_Tu|(x),
        \qquad T>0.
\]
Applying Theorem~\ref{s1.thm.3} to \(f=u\), and using
the  bounds
\[
        \sup_{s\in[0,1]}\mathbb E^x[|h+\psi|^2(x_s)]
        \leq  {\alpha}_0(x),
        \qquad
        \sup_{s\in[0,1]}
        \mathbb E^x[|\nabla(h+\psi)|^2(x_s)]
        \leq {\alpha}_1(x),
\]
we get
\[
        |\nabla u|(x)
        \leq
        \e^{\lambda T/2}
        \frac{\|\phi\|_{\infty}}{\phi(x)}
        \left[
        \frac1{\sqrt {\min\{T,  1\}}}
        +
        \frac14\sqrt{ {\alpha}_1(x)}
        +
        \sqrt{ {\alpha}_0(x)}
        +
        \frac{1}{8}
        \sqrt{ {\alpha}_0(x) {\alpha}_1(x)}
        \right]
        \|u\|_{\infty}.
\]
Choosing \(T=\lambda^{-1}\) proves the claim.
\end{proof}

Finally, under the explicit
geometric assumptions in Condition {\bf(C3)}, the conformal factor can be
chosen explicitly, leading to a more concrete estimate.

\begin{corollary}
\label{cor:neumann-eigen-explicit-nonconvex}
Assume Conditions {\bf (C2)} and {\bf (C3)}. 
Suppose that there exist non-negative functions \( \alpha_0,\, \alpha_1\) on \(M\)
such that
\[
        \sup_{s\in [0,1]}\mathbb E^x[h^2(x_s)]\leq  {\alpha}_0(x),
        \qquad
        \sup_{s\in [0,1] }\mathbb E^x[|\nabla h|^2(x_s)]
        \leq  {\alpha}_1(x).
\]
Let \(u\in C^2(M)\) satisfy \eqref{Harmonic-function} with \(\lambda>0\). Then, for every \(x\in M\),
\[
        |\nabla u|(x)
        \leq
        \e^{\frac12(1+d\sigma r_1)}
        \left[
        \sqrt{\max\{\lambda,  1\}}
        +
        \frac{\sqrt{ {\alpha}_1(x)}}{4}
        +
        \bigl(\sqrt{ {\alpha}_0(x)}+C_0\bigr)
       \Bigl(1+ \frac{\sqrt{{\alpha}_1(x)}}{8}\Bigr) \right]
        \|u\|_{\infty},
\]
where $C_0$ is defined in \eqref{def-C0}. 
\end{corollary}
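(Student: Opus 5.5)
The plan is to combine the spectral identity with Corollary~\ref{cor-S3-1}, exactly as Corollary~\ref{cor:neumann-eigen-convex} was deduced from Theorem~\ref{s1.thm.2}. Since \(u\) is a classical Neumann eigenfunction and \(P_t\) is generated by \(\frac12L\), we have \(P_tu=\e^{-\lambda t/2}u\). Hence for every \(T>0\),
\[
        |\nabla u|(x)=\e^{\lambda T/2}|\nabla P_Tu|(x).
\]
The first step is to apply Corollary~\ref{cor-S3-1} with \(f=u\). This is legitimate because Conditions (C2) and (C3) are assumed, and the moment bounds on \(h\) and \(\nabla h\) are exactly \eqref{gradient-ineq-2}. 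We use that \(u\) is bounded, as implicit in the statement, so \(u\in\mathcal B_b(M)\). This gives
\[
|\nabla u|(x)\leq \e^{\lambda T/2}\e^{\frac12 d\sigma r_1}\Bigl[\frac1{\sqrt{\min\{T,1\}}}+\sqrt{\alpha_0(x)}+C_0+\frac14\sqrt{\alpha_1(x)}+\frac18\bigl(\sqrt{\alpha_0(x)}+C_0\bigr)\sqrt{\alpha_1(x)}\Bigr]\|u\|_\infty .
\]

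The second step is the choice of \(T\). We take \(T=\lambda^{-1}\), so that \(\e^{\lambda T/2}=\e^{1/2}\). We then check that
\[
\frac1{\sqrt{\min\{\lambda^{-1},1\}}}=\sqrt{\max\{\lambda,1\}},
\]
treating the cases \(\lambda\ge1\) and \(\lambda<1\) separately. Finally we group the terms as
\[
\sqrt{\alpha_0}+C_0+\tfrac18(\sqrt{\alpha_0}+C_0)\sqrt{\alpha_1}=(\sqrt{\alpha_0}+C_0)\Bigl(1+\tfrac{\sqrt{\alpha_1}}8\Bigr).
\]
Multiplying the prefactors \(\e^{1/2}\e^{\frac12 d\sigma r_1}\) gives \(\e^{\frac12(1+d\sigma r_1)}\), which is the claimed estimate.

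The only point needing care is justifying the spectral relation \(P_tu=\e^{-\lambda t/2}u\) for a merely \(C^2\) bounded eigenfunction on a possibly noncompact manifold. To do this, I would apply Itô's formula to \(\e^{\lambda t/2}u(x_t)\). The local-time term vanishes because \(Nu=0\), and the drift vanishes because \(Lu=-\lambda u\). So this process is a bounded local martingale on \([0,T]\), hence a true martingale, and taking expectations yields the identity. Non-explosion of the reflecting diffusion, guaranteed under (C2), is used here. Everything else is bookkeeping.
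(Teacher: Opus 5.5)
Your proposal is correct and matches the route the paper intends. The paper states this corollary without a separate proof, and the implied argument is yours: the identity $|\nabla u|(x)=\e^{\lambda T/2}|\nabla P_Tu|(x)$, Corollary~\ref{cor-S3-1} applied to $f=u$, the choice $T=\lambda^{-1}$, and the regrouping $(\sqrt{\alpha_0}+C_0)(1+\sqrt{\alpha_1}/8)$. Your It\^o-formula justification of $P_tu=\e^{-\lambda t/2}u$ (bounded local martingale, using $Nu=0$ and non-explosion) is a useful addition, since the paper only asserts this relation.
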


\begin{remark}
The preceding corollaries show that the leading high-frequency contribution is
of order
\(
        \sqrt{\lambda}\,\|u\|_{\infty},
\)
which is consistent with \cite{ArnaudonThalmaierWang2020} when the domain is compact
and the Ricci curvature is lower bounded by a constant.  The remaining terms do not affect the leading growth
 once the curvature and
boundary quantities are well controlled. 
\end{remark}

\section{Appendix}

We point out that pointwise moment assumptions on the curvature lower bound
do not by themselves imply the Kato condition, nor do they guarantee the
exponential integrability needed for the multiplicative functional.

\begin{example}\label{no-Kato}
Let \(M\) be a rotationally symmetric model manifold
\[
        M=[0,\infty)\times \mathbb S^{d-1},
        \qquad
        g=dr^2+\varphi(r)^2 g_{\mathbb S^{d-1}},
\]
where \(\varphi(r)\sim r\) near \(r=0\), and
\[
        \varphi(r)=\exp(ar^2)
\]
for all sufficiently large \(r\), with \(a>0\). We take \(Z=0\), so that
\(\operatorname{Ric}_Z=\operatorname{Ric}\). 
On a model manifold, the radial
Ricci curvature is given by
\[
        \operatorname{Ric}(\partial_r,\partial_r)
        =
        -(d-1)\frac{\varphi''(r)}{\varphi(r)}.
\]
Since, for large \(r\),
\[
        \frac{\varphi'(r)}{\varphi(r)}=2ar,
        \qquad
        \frac{\varphi''(r)}{\varphi(r)}
        =
        2a+4a^2r^2,
\]
we have
\[
        \operatorname{Ric}(\partial_r,\partial_r)
        =
        -(d-1)(2a+4a^2r^2)
\]
for large \(r\). Hence the negative part of the Ricci curvature can be
controlled by
\[
        h(x)=C_a(1+\rho_o(x)^2),
\]
where \(C_a>0\) is a constant of order \(a^2\). In particular,
\[
        \operatorname{Ric}\geq -h.
\]
\end{example}

Let \(x_s\) be the Brownian motion on \(M\), generated by
\(\frac12\Delta\), and set
\[
        R_s=\rho_o(x_s).
\]
The radial process satisfies, away from the cut locus,
\[
        dR_s=d\beta_s+
        \frac{d-1}{2}\frac{\varphi'(R_s)}{\varphi(R_s)}\,ds,
\]
where \(\beta_s\) is a one-dimensional Brownian motion. Since
\(\varphi'(r)/\varphi(r)=2ar\) for large \(r\), the radial drift has at most
linear growth. Consequently, for every \(q\geq1\) and every finite \(t>0\),
\[
        \sup_{s\in[0,t]}\mathbb E^x R_s^q<\infty
\]
for each fixed starting point \(x\in M\). Therefore
\[
        \sup_{s\in[0,t]}\mathbb E^x h(x_s)^2<\infty,
        \qquad
        \sup_{s\in[0,t]}\mathbb E^x |\nabla h|^2(x_s)<\infty.
\]
Indeed,
\[
        h(x_s)^2
        =
        C_a^2(1+R_s^2)^2
        \leq C(1+R_s^4),
\]
and
\[
        |\nabla h|^2(x_s)
        =
        4C_a^2 R_s^2
\]
outside a compact set, up to a harmless modification inside the compact part.

However, these fixed-point moment bounds do not imply the exponential
integrability of the path integral. We sketch the reason. Choose an initial
point \(x\) with \(R_0=\rho_o(x)>2\). Since the radial drift is non-negative
for large \(r\), on suitable Brownian events one has a lower bound of the form
\[
        R_s\geq n,
        \qquad s\in[t/2,t].
\]
More precisely, for large integers \(n\), consider
\[
A_n=
\left\{
\beta_{t/2}\in[n,n+1],
\quad
\inf_{0\leq s\leq t/2}\beta_s\geq -1,
\quad
\inf_{t/2\leq s\leq t}(\beta_s-\beta_{t/2})\geq -1
\right\}.
\]
By the Markov property and the reflection principle for one-dimensional
Brownian motion, there exist constants \(c_1,c_2>0\) such that
\[
        \mathbb P(A_n)\geq c_1 \e^{-c_2 n^2}.
\]
On \(A_n\), we have \(R_s\geq n\) for \(s\in[t/2,t]\). Hence
\[
        \int_0^t h(x_s)\,ds
        \geq
        \frac t2 C_a n^2.
\]
Therefore
\[
\begin{aligned}
\mathbb E^x
\exp\left(\int_0^t h(x_s)\,ds\right)
&\geq
\sum_{n\gg1}
\mathbb E^x
\left[
\exp\left(\int_0^t h(x_s)\,ds\right)\mathbf 1_{A_n}
\right]  \\
&\geq
c_1
\sum_{n\gg1}
\exp\left[
\left(\frac t2 C_a-c_2\right)n^2
\right].
\end{aligned}
\]
Since \(C_a\) is of order \(a^2\), choosing \(a>0\) sufficiently large gives
\[
        \frac t2 C_a>c_2,
\]
and hence
\[
        \mathbb E^x
        \exp\left(\int_0^t h(x_s)\,ds\right)
        =
        \infty .
\]

We finally verify directly from the definition that this function \(h\) is not
in the Kato class. Recall that
\[
        f\in\mathcal K(M)
        \quad\Longleftrightarrow\quad
        \lim_{\alpha\downarrow0}
        \sup_{x\in M}
        \int_0^\alpha
        \mathbb E^x |f(x_s)|\,ds
        =
        0.
\]
For \(h(x)=C_a(1+\rho_o(x)^2)\), this condition fails. Indeed, for the squared
distance function one has, outside the cut locus and hence in the barrier
sense,
\[
        \frac12\Delta \rho_o^2
        =
        1+(d-1)\rho_o\frac{\varphi'(\rho_o)}{\varphi(\rho_o)}.
\]
Since \(\varphi'(r)/\varphi(r)=2ar\) for large \(r\), and the remaining part of
the manifold is compact, there exists a constant \(C>0\) such that globally
\[
        \frac12\Delta \rho_o^2\geq -C.
\]
By It\^o's formula,
\[
        \mathbb E^x \l[ \rho_o^2(x_s)\r]
        =
        \rho_o^2(x)+
       \frac12 \int_0^s
       \mathbb E^x\l[ \Delta  \rho_o^2(x_r)\r] \,dr
        \geq
        \rho_o^2(x)-Cs.
\]
Now choose \(X_R\in M\) such that \(\rho_o(X_R)=R\). For every fixed
\(\alpha>0\), if \(R\) is sufficiently large, then
\[
        \mathbb E^{X_R}\rho_o^2(x_s)
        \geq
        \frac12 R^2,
        \qquad 0\leq s\leq \alpha.
\]
Consequently,
\[
\begin{aligned}
\int_0^\alpha
\mathbb E^{X_R} h(x_s)\,ds
&=
C_a\int_0^\alpha
\mathbb E^{X_R}
\left(1+\rho_o(x_s)^2\right)\,ds  \geq
\frac12 C_a\alpha R^2.
\end{aligned}
\]
Letting \(R\to\infty\), we obtain
\[
        \sup_{x\in M}
        \int_0^\alpha
        \mathbb E^x h(x_s)\,ds
        =
        \infty
\]
for every \(\alpha>0\). Hence
\[
        h\notin\mathcal K(M).
\]

\providecommand{\bysame}{\leavevmode\hbox to3em{\hrulefill}\thinspace}
\providecommand{\MR}{\relax\ifhmode\unskip\space\fi MR }
\providecommand{\MRhref}[2]{%
  \href{http://www.ams.org/mathscinet-getitem?mr=#1}{#2}
}
\providecommand{\href}[2]{#2}

\end{document}